\newtheoremstyle{dotless}{}{}{\itshape}{}{\bfseries}{}{ }{} 
\theoremstyle{dotless}
\newtheorem{theorem}{Theorem} 
\newtheorem{lemma}{Lemma}
\newtheorem{prop}{Proposition}
\newtheorem{corollary}{Corollary}
\newtheorem{remark}{Remark}
\definecolor{red}{rgb}{1,0,0}
\definecolor{blue}{rgb}{.2,.6,.75}
\definecolor{green}{rgb}{.4,.7,.4}
\title[Mixed incomplete character sums   with smooth moduli]{Mixed incomplete character sums of rational functions with smooth moduli}
\author{Todd Cochrane}
\address{ Dept of Mathematics\\
          Kansas State University\\
          Manhattan, KS 66506}
\email{cochrane@math.ksu.edu} 
\author{Andrew Granville}
\address{D{\'e}partment  de Math{\'e}matiques et Statistique,   Universit{\'e} de
Montr{\'e}al, CP 6128 succ Centre-Ville, Montr{\'e}al, QC  H3C 3J7, Canada.}
\email{and.granville@gmail.com}
\author{Junren Zheng}
\address{Junren Zheng,
School of Mathematics and Statistics, Xi'an Jiaotong University, Xi'an, China, Xi'an, 710049, P. R. China and 
D{\'e}partment  de Math{\'e}matiques et Statistique,   Universit{\'e} de
Montr{\'e}al, CP 6128 succ Centre-Ville, Montr{\'e}al, QC  H3C 3J7, Canada.}
\email{junrenzheng03@gmail.com}
\thanks{AG is partially supported by a grant from the the Natural Sciences and Engineering Research Council of Canada.
   JZ is funded in part by the China Scholarship Council, NSFC (No. 12025106) and Shaanxi NSF (No. 2025JC-QYCX-002).}
\dedicatory{Dedicated to Roger Heath-Brown on the occasion of his 75th birthday}
\begin{document}

\begin{abstract}
Let $\chi=\chi_q$ be a primitive character mod $q$ and fix $\Delta>0$.
Graham and Ringrose \cite[Theorem 5]{GR} gave strong bounds on character sums $\sum_{M<n\leq M+N} \chi(n)$ in   intervals of length $N=q^\Delta$ whenever $q$ is squarefree and is sufficiently smooth.  Here we show that the smoothness parameter can be taken to be  $N^{1-\epsilon}$. We also discuss various generalizations and applications, obtaining best possible results in several aspects.
\end{abstract}

\maketitle

\section{Introduction}
Heath-Brown \cite{HB} developed a $q$-analogue of the  van der Corput method to estimate character sums, which enabled him to derive a Weyl-type bound for Dirichlet $L$-functions.
By employing this method, Graham and Ringrose \cite[Theorem 5]{GR} proved that 
for any fixed $\Delta>0$ there exists a $\xi=\xi(\Delta)>0$ such that if $\chi$ is a primitive character mod $q$ where $q$ is squarefree and $q^\xi$-smooth then there exists a constant $\eta=\eta(\Delta)>0$ for which
 \[
\bigg|  \sum_{M<n\leq M+N} \chi(n) \bigg| \ll N^{1-\eta}
 \]
for any integer $N$ with $N\geq q^\Delta$. 
In their proof, the smoothness bound $\xi$ is significantly smaller than the length of the interval parameter, $\Delta$; that is, $\xi(\Delta)\ll\Delta^2$ is admissible from their proof but  $\xi(\Delta)$ linear in  $\Delta$ is not.

\subsection{New results on incomplete character sums}
We will show that one may   take $\xi$ to be arbitrarily close to $\Delta$ in the Graham-Ringrose result, indeed that $q$ can be $N^{1-\epsilon}$-smooth. We remove the restriction that $q$ is squarefree replacing it with  $q\in \mathcal N(y)$, the set of positive  integers $q$ with  at most one prime factor $p\in (y, y^2]$ and otherwise all  prime power divisors of $q$ are $\leq y$, where $N\geq y^{1+\epsilon}$.  Our proof  will be adapted to establish a broad generalization, strong bounds on 
  \[
 \sum_{M<n\leq M+N} \chi(f(n)) e\bigg( \frac{ g(n)}q  \bigg),  
  \]
  where $\chi$ is a character mod $q$, and $f(x)$ and $g(x)$ are rational functions. 

We define $r_f$ to be the largest positive integer $r$ for which 
\begin{equation} \label{eq: Condition}
f(x)= cF(x)^r \text{ for some } F(x)\in \mathbb Q(x) \text{ and constant }  c\ne 0.
\end{equation}
 If $f(x)$ is a constant then define $\chi^{r_f}$ to be principal mod $q$ for any character $\chi \pmod q$.

\begin{theorem} \label{thm: main}  Fix $\delta,\epsilon>0$. Suppose that $f(x), g(x)\in \mathbb Q(x)$ where $f(x)$ and $g(x)$ are not both constants.
There exists a constant $\eta=\eta(f,g,\delta,\epsilon)>0$ and an explicitly determinable  positive integer $\mathcal M=\mathcal M(f,g,\delta)$ such that 
\begin{enumerate}[{\rm (a)}]
\item For any character $\chi$ mod $q$ where $q\in \mathcal N(y)$ with $(q,\mathcal M)=1$ and $y:=q^\delta$;
\item For any interval $I$ of length $N$ where $q\geq N\geq y^{1+\epsilon}$;
\end{enumerate}
we  have 
\begin{equation} \label{Key result}
  \sum_{n\in I} \chi(f(n)) e\bigg( \frac{ g(n)}q  \bigg) \ll N / q^{\eta},
 \end{equation} 
unless $g(x)$ is  a polynomial of degree $<1/\delta$  and $\chi^{r_f}$ is induced from a primitive character of conductor $q'$
 in which case we obtain
  \begin{equation} \label{Key result2}
  \sum_{n\in I} \chi(f(n)) e\bigg( \frac{ g(n)}q  \bigg) \ll N / (q')^{\eta}.
 \end{equation} 
  The implicit constants in \eqref{Key result} and \eqref{Key result2} depend only on $f, g, \delta$ and $\epsilon$  (like $\eta$).
\end{theorem}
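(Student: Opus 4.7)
The plan is to adapt the Heath--Brown/Graham--Ringrose $q$-analogue of van der Corput's method, iterating it $t\asymp \log(1/\epsilon)$ times while extracting at most one small prime factor of $q$ per stage. The sharpened bookkeeping — keeping track of the length of the interval and the effective modulus at each stage — is what allows the smoothness parameter to be pushed up to $N^{1-\epsilon}$, rather than the much smaller $q^{\xi}$ with $\xi\ll \delta^2$ that comes from a direct reading of \cite{GR}. The exceptional prime $p_*\in(y,y^2]$ allowed by $\mathcal N(y)$ is peeled off at the start and either absorbed into one stage of the iteration or handled trivially, depending on its size.

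After preliminary reductions (absorbing denominators, poles, and small algebraic obstructions of $f,g$ into the explicit modulus $\mathcal M$; reducing to $N\leq q$; and separating out $p_*$), the central iterative step goes as follows. Given a mixed sum
\[
S_j=\sum_{n\in I_j}\chi_j(f_j(n))\, e\!\left(\frac{g_j(n)}{q_j}\right),
\]
choose a prime $p\mid q_j$ with $p\leq y$, write $q_j=pq_j'$, and split $\chi_j=\chi_{j,p}\chi_{j,q_j'}$ via CRT. Averaging over translations $n\mapsto n+kq_j'$ for $0\leq k<K_j$ with $K_jp\leq |I_j|$, and then applying Cauchy--Schwarz in $n$, the factor $\chi_{j,q_j'}$ is invariant under such translations and cancels, while $g_j(n+k_1q_j')-g_j(n+k_2q_j')$ is divisible by $q_j'$. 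This yields a new mixed sum $S_{j+1}$ whose character/rational-function data are built from divided differences of $f_j,g_j$ and whose modulus is a proper divisor of $q_j$.

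Iterating $t$ times reduces the estimation of $|S|$ to that of a short, complete mixed sum modulo a product $p_1\cdots p_t\leq y^t$, at which point the Weil bound for rational character/exponential sums delivers square-root cancellation — provided the associated rational function on the reduction curve is nonconstant. The degenerate cases for Weil correspond precisely to $g$ being a polynomial of degree strictly less than $t\leq 1/\delta$ together with the iterated character argument becoming an $r_f$-th power, forcing $\chi^{r_f}$ to descend to a primitive character of smaller conductor $q'$; this is exactly the exceptional scenario producing \eqref{Key result2}. The hard parts will be: (i) an inductive algebraic argument showing that $t$ nested divided differences of $f$ and $g$ remain nondegenerate except in the stated exceptional cases, so that Weil applies with full strength; and (ii) the sharp tuning of the shift lengths $K_j$ so that the iteration can be sustained all the way down to the smoothness threshold $y=q^\delta$ while the cumulative Cauchy--Schwarz losses add up to only $q^{o(1)}$, leaving room for a genuine power saving $q^{-\eta}$ at the final Weil step.
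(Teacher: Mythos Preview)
Your high-level plan (iterated van der Corput/Graham--Ringrose differencing, then complete-sum estimates) matches the paper, but the bookkeeping is garbled and two essential ingredients are missing. On bookkeeping: in the paper (Lemma~\ref{lem: BasicIneq}, Proposition~\ref{prop: First thm}) one writes $q=q_1\cdots q_kQ$ and shifts by the \emph{small} factors $q_j\le N^{1-\epsilon}$, so that after Cauchy the sum is periodic modulo the remaining piece $Q$; the complete-sum estimate is then applied to each prime power dividing $Q$. You instead describe shifting by the large cofactor $q_j'=q_j/p$ under the constraint $K_jp\le|I_j|$, which is inconsistent (the correct constraint would be $K_jq_j'\lesssim|I_j|$, impossible at the first step since $q_j'\approx q/p\gg N$), and you end with modulus $p_1\cdots p_t$ rather than $Q$. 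Also, the number of iterations must be $k\asymp 1/\delta$ (so that $q\le y^{k+O(1)}$), not $\log(1/\epsilon)$.

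More seriously, two structural pieces are absent. (a) You invoke only Weil, but $\mathcal N(y)$ permits prime-power divisors $p^e\le y$ with $e\ge 2$; the complete sums $\pmod{p^e}$ require a Postnikov--Gallagher--Iwaniec reduction (Theorem~\ref{thm:chi-expand2}, Corollary~\ref{cor:chi-expand2}), and the paper's self-described ``key new result'' (Proposition~\ref{prop: vp}) pins down the exact valuation $v_p\bigl(H^+_{[\mathbf h]}\bigr)=\sum_j v_p(h_{j,1}-h_{j,0})$ of the iterated divided difference --- mere nondegeneracy is not enough, because this precision is what makes the average over the shift parameters $h_{i,j}$ in \eqref{eq: ComboBound} tractable (via Lemma~\ref{lem: counts}). (b) The differencing argument alone leaves the exceptional case as ``$g$ polynomial of degree $\le k+1$'' with $k$ ranging up to $2/\delta$ (see Theorem~\ref{thm: mainY}); the paper needs a separate classical Weyl-sum estimate (Proposition~\ref{prop: Classic}, Theorem~\ref{thm: mainZ}) for polynomial $g$ of degree $\ge 1/\delta$ to shrink the exceptional range down to the stated $\deg g<1/\delta$. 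Your outline omits both steps.
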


A positive proportion of integers $q\leq x$ satisfy the hypothesis of   Theorem   \ref{thm: main}.

We claim that \eqref{Key result2} is ``best possible'': Let $g(x)=x^D$ for any integer $D<1/\delta$ and write 
$D= 1/\delta-\theta$  for some fixed $\theta>0$.  Let $I=(0,N]$ with $N=y^{1+\epsilon}$. Suppose
$\chi^{r_f}$ is induced from a primitive character $\psi$ of conductor $q'$ with $q'=o(q^{\epsilon})$ and that $q=rq'$ where $r$ is a large prime so that  from \eqref{eq: Condition} with $c=1$ we have
$ \chi(f(n))=  \psi(F(n)) 1_{(F(n),r)=1}$.
Then for any $n\in I$ we have $n^D\leq N^{1/\delta-\theta}= q^{(1-\delta\theta)(1+\epsilon)}<q^{1-\epsilon}$ for an appropriately small choice of $\epsilon>0$ so that $e(\frac{n^D }q)=1+O(q^{-\epsilon})=1+o(1/q')$ which gives
\[
\sum_{n\in I} \chi(f(n)) e\bigg( \frac{ g(n)}q  \bigg) =  \sum_{n\in I}   \psi(F(n)) +o(N/q')\sim  \frac N{q'}  \sum_{a \pmod {q'}}   \psi(F(a))
 \]
assuming that this sum is $\geq 1$ in absolute value (typically we expect the sum to be of size $(q')^{1/2+o(1)}$).
Therefore one cannot improve the upper bound in \eqref{Key result2}, at this level of generality, other than in the value of the exponent $\eta$.
    
There are similar but weaker results in \cite[Sections 12.5 \& 12.6]{IK}. In \cite[Section 12.5]{IK} the authors work with $q$ squarefree, take $g(x)=0$, and work with stronger hypotheses. In \cite[Section 12.6]{IK} the authors work with more powerful moduli but then with $f(x)=x, g(x)=0$ and only get good bounds when $M\leq N^{1+\eta}$ where $I=(M,M+N]$.


 We can let $\delta\to 0$ as $q\to \infty$ in  Theorem \ref{thm: main}. Our proof, as currently developed, needs $\delta \gg \frac 1{\log \log q}$ (see remark \ref{rem: MCC}) which is more-or-less the same range of uniformity obtained by
Mei-Chu Chang in \cite{mcc}  (though she had $f(x)=x, g(x)=0$ and more restrictive conditions on $q$). She gets the wider  range  $N>\exp( (\log q)^{1- c})$ when the squarefree part of $q$ is $\ll \exp( c'(\log q)^{1- c})$.

 In our proofs we will not need to assume that $(q,\mathcal M)=1$ but rather obtain bounds like \eqref{Key result} and \eqref{Key result2} in terms of $q_{\mathcal M}$ and $q'_{\mathcal M}$ where  $a_{\mathcal M}$ is defined to be the largest divisor of $a$ that is coprime with  a given integer $\mathcal M$.  
 We suspect that the integer $\mathcal M$ could be avoided in the formulation of Theorem \ref{thm: main}. It is used in the current proof to  avoid certain algebraic issues, and these may become more complicated for the exponential sums mod $p^m$ when $p|\mathcal M$; nonetheless these issues might be worth exploring to make our results more applicable, starting with the simplest case, $f(x)=x, g(x)=0$.


\subsection{Linear $g(x)$  and the Brun-Titchmarsh theorem}    
 
   Taking $g(x)=bx$ in Theorem \ref{thm: main} (and changing $q'$ to $Q$) we obtain the following, which can be viewed as bounding certain Fourier coefficients non-trivially:
 
 \begin{corollary}\label{cor: mainnonprincipal}
 Let $f(x)\in \mathbb Q(x)\setminus \mathbb Q$.
  Fix $\delta,\epsilon>0$. 
There exists a constant $\eta=\eta(\delta,\epsilon,f)>0$ and an explicitly determinable  positive integer $\mathcal M=\mathcal M(\delta,f)$ such that if $q\in \mathcal N(y)$  where $y:=q^\delta$ with $(q,\mathcal M)=1$ then for any interval $I$ of length $N$ where $q\geq N\geq y^{1+\epsilon}$ and any  character $\chi$ mod $q$, with $\chi^{r_f}$  induced from a primitive character of conductor  $Q$, and any integer $b$ we have
\[
 \sum_{n\in I} \chi(f(n))e\bigg( \frac{bn}q\bigg) \ll N / Q^{\eta}.
\] 
\end{corollary}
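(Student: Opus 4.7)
The plan is to deduce the corollary directly from Theorem~\ref{thm: main} applied with $g(x)=bx$. First, I would observe that we may assume $0<\delta<1$: for $\delta\geq 1$ the condition $q\geq N\geq y^{1+\epsilon}=q^{\delta(1+\epsilon)}$ is unsatisfiable, so the statement is vacuous. Under $\delta<1$ we have $\deg g\leq 1<1/\delta$, so the linear polynomial $g(x)=bx$ falls automatically into the exceptional second case of Theorem~\ref{thm: main}. Since $f\in\mathbb{Q}(x)\setminus\mathbb{Q}$ is non-constant, the requirement that $f$ and $g$ not both be constant is met, and the remaining hypotheses (namely $q\in\mathcal{N}(y)$, $(q,\mathcal{M})=1$, and $N$ in the prescribed range) translate verbatim. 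Then \eqref{Key result2} reads
\[
\sum_{n\in I}\chi(f(n))\,e\!\left(\frac{bn}{q}\right)\ll \frac{N}{Q^{\eta}},
\]
with $Q$ the conductor of the primitive character inducing $\chi^{r_f}$, which is exactly the corollary's conclusion. The case $b=0$ is just the $g\equiv 0$ specialization and poses no additional difficulty.

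The main obstacle --- and the only substantive point beyond a mechanical citation --- is uniformity in $b$. Theorem~\ref{thm: main} permits $\mathcal{M}=\mathcal{M}(f,g,\delta)$ and $\eta=\eta(f,g,\delta,\epsilon)$ to depend on $g$, whereas the corollary asserts dependence only on $f,\delta,\epsilon$. For the one-parameter family $g_b(x)=bx$ this should cost nothing: as $b$ varies the degree stays at $\leq 1$, there are no finite poles, and no critical points appear, so the algebraic data controlling $\mathcal{M}$ and $\eta$ in the proof of Theorem~\ref{thm: main} --- discriminants, leading coefficients, denominators, and singularities of $f,g$ and of the shifted/differenced versions arising in the van der Corput iteration --- should not carry any $b$-dependence that obstructs uniformity. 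Verifying this by tracing through the proof, essentially checking that a linear phase $e(bn/q)$ is handled ``for free'' at each differencing step and in each completion to a full character sum mod $p^m$, is where I would focus the effort; granting it, the corollary follows immediately.
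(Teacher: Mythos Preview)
Your approach is correct and identical to the paper's one-line derivation: take $g(x)=bx$ in Theorem~\ref{thm: main} and rename $q'$ as $Q$. On the uniformity concern you raise, a quick inspection of the proof confirms your intuition: $\eta$ depends on $g$ only through $\deg g$, and for $g(x)=bx$ the differenced phase $g^+$ is a constant once $k\ge 1$, so the linear twist is absorbed into the Fourier parameter already maxed over in~\eqref{eq: ComboBound}; hence $\mathcal M$ and $\eta$ can be taken independent of $b$.
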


\begin{corollary} \label{cor: mainnonprincipal2}  For any fixed $\delta,\epsilon>0$   there exists $\eta=\eta(\delta,\epsilon)>0$ and an explicitly determinable  positive integer $\mathcal M=\mathcal M(\delta)$ such that if 
$q\in \mathcal N(y)$  where $y:=q^\delta$ with $(q,\mathcal M)=1$  then for any non-principal character $\chi$  mod $q$
and any  interval $I$ of length $N$ where $q\geq N\geq y^{1+\epsilon}$, we have 
\[
 \sum_{n\in I} \chi(n) \ll N^{1-\eta} .
\]
\end{corollary}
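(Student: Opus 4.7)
The plan is to deduce Corollary \ref{cor: mainnonprincipal2} from Corollary \ref{cor: mainnonprincipal} applied with $f(x)=x$ and $b=0$, so that $r_f=1$ and $\chi^{r_f}=\chi$. Since $\chi$ is non-principal mod $q$, the conductor $Q$ of the primitive character inducing $\chi$ satisfies $Q>1$, and Corollary \ref{cor: mainnonprincipal} immediately yields
\[
\sum_{n\in I}\chi(n)\ll N/Q^{\eta_1}
\]
for some $\eta_1=\eta_1(\delta,\epsilon)>0$, with the same $\mathcal M=\mathcal M(\delta)$ inherited from that corollary.

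This bound is useful only when $Q$ is a positive power of $N$, so the next step is to split into two regimes by a threshold, say $Q\geq N^{\eta_1/2}$ versus $Q<N^{\eta_1/2}$. In the first regime the estimate above is already $\ll N^{1-\eta_1^2/2}$. In the second regime I would use the explicit description $\chi(n)=\chi_1(n)\mathbf{1}_{(n,q)=1}$, where $\chi_1$ is primitive mod $Q$, expand the coprimality indicator by M\"obius inversion, and invoke complete multiplicativity of $\chi_1$ to rewrite
\[
\sum_{n\in I}\chi(n)=\sum_{\substack{d\mid q\\ (d,Q)=1}}\mu(d)\chi_1(d)\sum_{m:\,dm\in I}\chi_1(m).
\]
Each inner sum is a partial sum of a non-principal character mod $Q$, hence at most $Q$ in absolute value by breaking the interval into complete periods plus a remainder. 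Summing over $d\mid q$ gives the overall bound $\ll \tau(q)\,Q$.

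To close the argument, the hypothesis $N\geq y^{1+\epsilon}=q^{\delta(1+\epsilon)}$ forces $q\leq N^{1/(\delta(1+\epsilon))}$, so that $\tau(q)\ll_{\delta,\epsilon} N^{o(1)}$ uniformly in the relevant range. Combined with $Q<N^{\eta_1/2}$ this yields $\ll N^{1-\eta}$ for any $\eta<\eta_1/2$ and $N$ sufficiently large; for instance $\eta=\eta_1^2/4$ works in both regimes. The substantive input is Corollary \ref{cor: mainnonprincipal} itself, while the M\"obius step is a routine sieve manoeuvre whose only role is to handle the small-conductor regime in which a power saving in $Q$ does not by itself translate into a power saving in $N$. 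I do not expect any essential obstacle beyond bookkeeping of the exponents.
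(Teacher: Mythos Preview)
Your argument is correct and uses the same two ingredients as the paper---the main theorem for large conductor and a M\"obius decomposition plus an elementary character-sum bound for small conductor---but you organize them more efficiently. The paper first expands $\sum_{n\in I}\chi(n)=\sum_{\ell\mid r}\mu(\ell)\chi_Q(\ell)\sum_{m\in \ell^{-1}I}\chi_Q(m)$, bounds the small-$Q$ range via P\'olya--Vinogradov, and for large $Q$ applies Theorem~\ref{thm: main} to each inner sum (with modulus $Q$ and interval length $N/\ell$, which requires re-checking the smoothness and length hypotheses for that sub-problem). You instead invoke Corollary~\ref{cor: mainnonprincipal} with $f(x)=x,\ b=0$ on the original sum directly, which immediately gives $\ll N/Q^{\eta_1}$ and avoids that re-verification; only in the small-$Q$ regime do you open the M\"obius sum, and there the trivial period bound $\bigl|\sum_{m}\chi_1(m)\bigr|\le Q$ already suffices (indeed yielding $\tau(q)\,Q\ll N^{\eta_1/2+o(1)}$, much stronger than the $N^{1-\eta}$ you record). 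Either route works; yours is a little more streamlined.
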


 \begin{corollary}\label{cor: L values}
  Let $q=P_1P_2\cdots$ with $P_1>P_2>\cdots$ the powers of distinct primes dividing $q$.
  Let $\chi$ be a  non-principal character mod $q$. If $P_1$ is a prime then for any $t=q^{o(1)}$ we have 
\[
|L(1+it,\chi)| \leq \max\{\tfrac 12\log P_1(q),\log P_2(q)\}+o(\log q).
\]
If $P_1$ is a prime power then for any $t=q^{o(1)}$ we have 
\[
|L(1+it,\chi)| \leq  \log P_1(q) +o(\log q).
\]
  \end{corollary}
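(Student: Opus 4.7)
My plan is to express $L(1+it,\chi)$ as a short Dirichlet sum plus a small tail via Abel summation, and then invoke Corollary \ref{cor: mainnonprincipal2} to control the tail. The crucial step will be a careful choice of the smoothness parameter $y$. If $P_1$ is prime I will take $y:=\max\{\lceil P_1^{1/2}\rceil,\,P_2\}$, so that $\log y = \max\{\tfrac12\log P_1,\log P_2\}+O(1)$; if $P_1=p^k$ with $k\geq 2$ I will take $y:=P_1$, so that $\log y=\log P_1$. In each case I will verify $q\in\mathcal N(y)$: in the first, $P_1\in(y,y^2]$ is the unique prime of $q$ in that range (since $P_1\leq y^2$ by construction) and every other prime power divisor is at most $P_2\leq y$; in the second, every prime power divisor of $q$ is at most $P_1=y$ and every prime divisor of $q$ is at most $P_1^{1/k}\leq y$, so $(y,y^2]$ contains no prime factor of $q$ at all.

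Setting $X:=y^{1+\epsilon}$ for a small $\epsilon>0$, I will split
\[
L(1+it,\chi) \;=\; \sum_{n\leq X}\frac{\chi(n)}{n^{1+it}} \;+\; \sum_{n>X}\frac{\chi(n)}{n^{1+it}},
\]
and majorise the head trivially by $\sum_{n\leq X}\frac{1}{n} = (1+\epsilon)\log y + O(1)$; this delivers the stated main term once I let $\epsilon\to 0$ slowly (e.g.\ $\epsilon=1/\log\log q$). For the tail I will use Abel summation
\[
\sum_{X<n\leq Z}\frac{\chi(n)}{n^{1+it}} \;=\; \frac{S(Z)}{Z^{1+it}} - \frac{S(X)}{X^{1+it}} + (1+it)\int_X^Z\frac{S(u)}{u^{2+it}}\,du,
\]
where $S(u):=\sum_{n\leq u}\chi(n)$. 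Corollary \ref{cor: mainnonprincipal2} applied to the interval $[1,u]$ for each $y^{1+\epsilon}\leq u\leq q$ gives $|S(u)|\ll u^{1-\eta}$, and the $q$-periodicity of $\chi$ extends this to $|S(u)|\ll q^{1-\eta}$ for all $u$. Hence the integral is $\ll X^{-\eta}/\eta$, the boundary terms are $\ll X^{-\eta}$, and letting $Z\to\infty$ the whole tail is $O((1+|t|)X^{-\eta}/\eta)=O(q^{o(1)}\,y^{-\eta(1+\epsilon)})=o(1)$, since $t=q^{o(1)}$ and $y$ is a positive power of $q$ in the non-degenerate range.

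Combining the two contributions gives $|L(1+it,\chi)|\leq\log y + o(\log q)$, which is exactly the desired bound in both cases. The main technical obstacle will be the coprimality hypothesis $(q,\mathcal M)=1$ in Corollary \ref{cor: mainnonprincipal2}: if some prime power factor of $q$ divides $\mathcal M$ I will need the version of the bound stated in terms of $q_{\mathcal M}$ flagged after Theorem \ref{thm: main}, which perturbs $\log y$ only by an absorbable $O(\log\mathcal M)=O(1)$ correction. A secondary point is to check that the induced parameter $\delta:=(\log y)/(\log q)$ satisfies $\delta\gg 1/\log\log q$ so that Theorem \ref{thm: main} applies, but this is automatic in any range where the claimed bound is non-trivial.
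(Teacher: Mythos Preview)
Your proposal is essentially the same approach as the paper's: split the $L$-series at $y^{1+\epsilon}$, bound the head trivially, and control the tail by partial summation together with Corollary~\ref{cor: mainnonprincipal2}. Your use of the $q$-periodicity of $S(u)$ for $u>q$ is a harmless variant of the paper's appeal to P\'olya--Vinogradov there, and your discussion of the $(q,\mathcal M)=1$ obstruction is in fact more explicit than the paper's own treatment.

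There is one technical point you gloss over that the paper handles explicitly. In Corollary~\ref{cor: mainnonprincipal2} the exponent $\eta$ and the integer $\mathcal M$ depend on $\delta$, and your $\delta=(\log y)/(\log q)$ varies continuously with $q$; as written, this makes the implied constants and $\eta$ in ``$|S(u)|\ll u^{1-\eta}$'' depend on $q$, so the tail estimate $O(q^{o(1)}y^{-\eta(1+\epsilon)})=o(1)$ is not immediate. The paper resolves this by fixing $\epsilon>0$ and, for each $q$, rounding $\delta$ up to the nearest element of the finite set $\{\epsilon,2\epsilon,\dots\}$ (equivalently replacing your $y$ by $q^{k\epsilon}$ for the least $k$ with $q^{k\epsilon}\geq y$); your verification that $q\in\mathcal N(y)$ is stable under enlarging $y$, so this rounding is harmless, and it costs at most $\epsilon\log q$ in the main term, which is absorbed by letting $\epsilon\to 0$. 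Once you insert this discretization, your argument matches the paper's.
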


We can also deduce a strong Brun-Titchmarsh inequality, in the same way that  \cite[Theorem 1.5]{XZ} follows from \cite[Lemma 3.6]{XZ},  using Corollary \ref{cor: mainnonprincipal2}.

\begin{corollary}\label{eq:BT-Titchmarsh} 
There exists a sufficiently small $\delta>0$  and  an explicitly determinable  positive integer $\mathcal M=\mathcal M(\delta)$ such that if $q\in \mathcal N(q^\delta)$  with $(q,\mathcal M)=1$  then  
\[
\pi(q^L;q,a) \leq (C_L+o(1)) \frac{x}{\phi(q)\log x} \text{ where } C_L:=\begin{cases}
2 & \text{ for } \frac{12}5 \leq L\leq 8;\\
\frac{5L}{5L-6} & \text{ for } \frac{20}9 \leq L\leq \frac{12}5.
\end{cases}
\]
\end{corollary}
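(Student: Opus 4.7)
The plan is to mimic the deduction of \cite[Theorem 1.5]{XZ} from \cite[Lemma 3.6]{XZ}, substituting our Corollary \ref{cor: mainnonprincipal2} for their incomplete character sum bound. Their Lemma 3.6 is an estimate of the shape ``non-principal incomplete character sums over intervals of length at least $q^{\delta(1+\epsilon)}$ save a power of $q$'', which is precisely what Corollary \ref{cor: mainnonprincipal2} supplies, in the broader context of moduli $q\in\mathcal N(q^\delta)$ with $(q,\mathcal M)=1$.

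First, I would apply an upper bound sieve (Selberg's $\lambda$-squared sieve) to the sequence of primes in the arithmetic progression $\{n\leq q^L:\,n\equiv a\pmod q\}$. This produces a bound of the shape $\pi(q^L;q,a)\leq \frac{\mathcal A(q^L,q)}{\phi(q)}+\mathcal E$, where $\mathcal A$ yields the main term with the piecewise coefficient $C_L$ (arising from the usual sieve combinatorics for $L\in[12/5,8]$, and from a bilinear Type~II identity à la Motohashi--Iwaniec for the lower range $L\in[20/9,12/5]$) and $\mathcal E$ is a linear combination of sums $\sum_{\chi\neq\chi_0}\bar\chi(a)\sum_{n\in J}\chi(n)$ for intervals $J$ determined by the sieve weights. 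I would then estimate $\mathcal E$ using Corollary \ref{cor: mainnonprincipal2}: every sum $\sum_{n\in J}\chi(n)$ with $|J|\geq q^{\delta(1+\epsilon)}$ and $\chi$ non-principal is bounded by $|J|^{1-\eta}$, and, after summing over characters (by orthogonality) and over the various intervals produced by the sieve, this shows $\mathcal E=o(q^L/(\phi(q)\log q))$ provided $\delta$ is chosen sufficiently small. The Type~II bilinear form, after Cauchy--Schwarz, likewise reduces to a sum of character sums over subintervals which are again controlled by Corollary \ref{cor: mainnonprincipal2}.

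The main obstacle is bookkeeping rather than conceptual: one must verify that every interval appearing in the Xu--Zhao sieve decomposition has length at least $q^{\delta(1+\epsilon)}$ so that Corollary \ref{cor: mainnonprincipal2} applies, and one must carry along the coprimality condition $(q,\mathcal M(\delta))=1$ throughout. Since $\delta$ can be taken arbitrarily small, at the cost of enlarging $\mathcal M(\delta)$, this is a matter of balancing parameters rather than introducing new ideas, and the argument that produces \cite[Theorem 1.5]{XZ} then transfers without modification to the present smoothness class $\mathcal N(y)$, yielding the stated values of $C_L$ for $L\in[20/9,8]$.
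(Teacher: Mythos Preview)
Your proposal is correct and follows exactly the paper's approach: replace \cite[Lemma 3.6]{XZ} by Corollary~\ref{cor: mainnonprincipal2} in the proof of \cite[Theorem 1.5]{XZ}, and verify that the argument (in particular \cite[Eq.(5.19)]{XZ}) goes through unchanged. One minor slip: the reference \cite{XZ} is Xi--Zheng, not ``Xu--Zhao''.
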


\subsection*{Notation}
For a rational function $h(x)\in \mathbb Q(x)$ we write $h(x)=h_+(x)/h_-(x)$ where $h_+$ and $h_-$ are relatively prime polynomials with integer coefficients. We   define $\deg h:=\max\{ \deg h_+,  \deg h_-\}$, and $\deg_p h$ to be the degree of the reduction of $h \pmod p$.

We observe that if $h_1(x), h_2(x)\in \mathbb Q(x)$ then $\deg( h_1+h_2)\leq \deg h_1+ \deg h_2$,
and $\deg h' , \deg h'/h \leq 2\deg h$. This implies that if $f(x), g(x)\in \mathbb Q(x)$ then
$\deg(g'(x)+Cf'(x)/f(x)) \leq 2 \deg f+2\deg g$.

Suppose that $f(x), g(x)\in \mathbb Q(x)$  and $\chi$ is a primitive character mod $p^m$.
 We define $\chi(f(n))=\chi(f_+(n))\overline{\chi(f_-(n))}$ so that $\chi(f(n))=0$ if $f_-(n)\equiv 0 \pmod p$.
 Similarly  we define $e( \frac{ g(n)} {p^m })=0$ if $g_-(n)\equiv 0 \pmod p$.
 
 Suppose that $f(x)=\frac{c_+}{c_-}\prod_i f_i(x)^{e_i}\in \mathbb Q(x)$  where the $f_i(x)$ are distinct irreducible polynomials in $\mathbb Z[x]$, 
 $c_+$ and $c_- $ are coprime integers, and each $e_i\in \mathbb Z_{\ne 0}$. 
 Let  $\Delta(f)$ equal $r_f$ (as defined in \eqref{eq: Condition}) times the discriminant of   $c_f  \prod_i f_i(x)\in \mathbb Z[x]$
 where $c_f$ is the leading coefficient of $f_+(x)f_-(x)$.
  
  We also define $\Delta(f,g,k)$ to be $\Delta(f)\Delta(g)$ times the product of all of the primes $<2(\deg f+\deg g)+k+16$, for any integer $k\geq 0$.
  
  We define $v_p(r)$ to be the power of $p$ dividing the rational number $r$; that is, if $r=p^ea/b$ where $p$ does not divide the coprime integers $a$ and $b$, and $e\in \mathbb Z$ then $v_p(r)=e$. We also let  $v_p(0)=\infty$.

A \emph{character-exponential sum} is a sum of the form
\[
 \sum_{n  \pmod q}  \chi(f(n)) e\bigg( \frac{ g(n)} {q }  \bigg)    
\]
where $\chi$ is a character mod $q$.
The sum is \emph{complete} if it is a sum   over all $n \pmod q$, and is  \emph{incomplete} if it is a sum   over some subset, usually an interval. We will obtain bounds on incomplete  character-exponential sums, using a complicated differencing technique, in terms of complete  character-exponential sums. These have been widely explored; results in the literature with a prime modulus appear in the form that we need, but with prime power moduli we have decided to give our own proofs although these may be deducible from the literature (see e.g. \cite{Co}) but the deductions are complicated. To prove these estimates we begin with bounds on the number of solutions to polynomial congruences with prime power moduli.

\section{Counting solutions to congruences mod $p^m$}

The main result of this section is the following bound.
    
 \begin{prop} \label{prop: Soln count}
  Suppose that $h(x)=h_+(x)/h_-(x)\in \mathbb Q[x]$   where $h(x)\not\equiv 0 \pmod p$ and $D\ge 1$ is the degree of $h_+(x)$ mod $p$.
  Then
  \[
  \# \{ a \pmod {p^m}: h(a)\equiv 0 \pmod {p^m},\ h_-(a)\not\equiv 0 \pmod p\}  \leq  D p^{m -\lceil \frac m{D}\rceil } .
  \]
  \end{prop}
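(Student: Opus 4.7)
My plan is to reduce the count to roots of a monic polynomial of degree exactly $D$ in $\mathbb Z_p[x]$, then run an ultrametric grouping argument over $\overline{\mathbb Q_p}$. First, the condition $h_-(a)\not\equiv 0\pmod p$ only cuts down the solution set, so it suffices to bound $\#\{a\pmod{p^m}:h_+(a)\equiv 0\pmod{p^m}\}$. After clearing denominators I may assume $h_+\in\mathbb Z_p[x]$; the hypothesis $\deg_p h_+=D$ means the coefficient of $x^D$ is a $p$-adic unit while every higher-degree coefficient (if any) has positive valuation. Working with the Newton polygon of $h_+$, which has a vertex at $(D,0)$, I would factor $h_+=F\cdot G$ in $\mathbb Z_p[x]$, where $F$ is monic of degree $D$ with all roots in $\overline{\mathbb Q_p}$ of non-negative valuation, and $G$ has degree $\deg h_+-D$ with all roots of strictly negative valuation. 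Since $F$ reduces mod $p$ to a monic polynomial of degree $D$ while $h_+\pmod p$ also has degree $D$, the factor $G\pmod p$ must be a nonzero constant; hence $G(a)$ is a $p$-adic unit for every $a\in\mathbb Z_p$, and $v_p(h_+(a))=v_p(F(a))$. This reduces matters to counting roots of $F$ modulo $p^m$.

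For the core step I would extend $v_p$ uniquely to $\overline{\mathbb Q_p}$ and write $F(x)=\prod_{i=1}^{D}(x-\alpha_i)$ with each $v_p(\alpha_i)\ge 0$. If $a\in\mathbb Z_p$ satisfies $v_p(F(a))\ge m$, then additivity of $v_p$ gives $\sum_i v_p(a-\alpha_i)\ge m$, so there exists an index $i(a)$ with $v_p(a-\alpha_{i(a)})\ge m/D$. I partition the solutions according to such a choice of index. If $a$ and $b$ are both assigned the same $\alpha_i$, the ultrametric inequality yields $v_p(a-b)\ge m/D$; since $a-b\in\mathbb Z_p$ has integer valuation, this forces $v_p(a-b)\ge\lceil m/D\rceil$, so $a\equiv b\pmod{p^{\lceil m/D\rceil}}$. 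Thus each of the (at most) $D$ classes is contained in a single residue class mod $p^{\lceil m/D\rceil}$, contributing at most $p^{m-\lceil m/D\rceil}$ residues mod $p^m$; summing over $i$ yields the claimed bound $D\cdot p^{m-\lceil m/D\rceil}$.

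The main point of care will be the Newton-polygon factorization step, since as an element of $\mathbb Z_p[x]$ the polynomial $h_+$ may have degree strictly larger than $D$, and the tail of roots of negative valuation must be peeled off before the ultrametric grouping applies. After this reduction the argument is clean and requires no case distinction on whether $D\mid m$ nor on whether the $\alpha_i$ are unramified or generate a ramified extension of $\mathbb Q_p$. An inductive Hensel-lifting proof is an alternative but becomes awkward in the presence of repeated roots, whereas the $p$-adic grouping absorbs multiplicities uniformly.
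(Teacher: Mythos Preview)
Your argument is correct and takes a genuinely different route from the paper. The paper deduces Proposition~\ref{prop: Soln count} from a separate Proposition~\ref{prop:KeyBound}, which it proves by induction on $m+d$: for each root $a$ of $h$ modulo $p$ it extracts a multiplicity parameter $r(a)$ from the Taylor expansion, passes to an auxiliary polynomial $H_a$ of degree $d_a\le r(a)$ at level $m-r(a)$, and combines the inductive bounds using $\sum_a r(a)\le d$. Your proof instead works over $\overline{\mathbb Q_p}$: the Newton-polygon factorization peels off the roots of negative valuation so that the remaining monic factor $F$ of degree $D$ has all roots integral, and then a pigeonhole/ultrametric step shows that any solution $a$ lies within $p$-adic distance $p^{-m/D}$ of some root $\alpha_i$, whence any two solutions assigned to the same root are congruent modulo $p^{\lceil m/D\rceil}$.

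The paper's inductive proof is entirely elementary---it never leaves $\mathbb Z/p^m\mathbb Z$ and uses no $p$-adic completion---while yours is shorter, non-inductive, and (as you note) absorbs repeated roots without any extra bookkeeping. The only non-elementary ingredient you invoke is the factorization $h_+=F\cdot G$ in $\mathbb Z_p[x]$ at the Newton-polygon vertex $(D,0)$; here it is worth recording that $(D,0)$ really is a vertex (slopes to the left are $\le 0$ since all heights are $\ge 0$, slopes to the right are $>0$ since heights beyond $D$ are $\ge 1$), that $F$ lands in $\mathbb Z_p[x]$ because Galois permutes the integral roots among themselves, and that $G\in\mathbb Z_p[x]$ then follows by monic division. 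With those points made explicit the proof is complete.
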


 Suppose that $h(x)\in \mathbb Z[x]$ and that $h(x)\not\equiv 0 \pmod p$.
  For each $m\geq 1$ let 
  \[
  \mathcal A_m(h):=\{ a \pmod {p^m}: h(a)\equiv 0  \pmod {p^m}\} .
  \]
 
\begin{prop} \label{prop:KeyBound}  For any  $h(x)\in \mathbb Z[x]$ with $d=\deg_p h \ge 1$ and $m\geq 1$ we have 
 \[
  \#  \mathcal A_m(h) \leq d \cdot p^{m-\lceil \frac m{d}\rceil}.
  \]
\end{prop}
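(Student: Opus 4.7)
The plan is to induct on $m$. For the base case $m=1$, the set $\mathcal{A}_1(h)$ equals the root set of $\bar h := h \bmod p$ in $\mathbb{F}_p$, a polynomial of degree $d := \deg_p h$, so $\#\mathcal{A}_1(h) \leq d = d \cdot p^{1 - \lceil 1/d \rceil}$. For the inductive step I partition $\mathcal{A}_m(h)$ by reduction mod $p$ and bound the lifts of each root of $\bar h$ separately, via a \emph{substitute-and-descend} trick that reduces each count to a smaller-modulus instance of the same problem.

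Fix a root $\bar a$ of $\bar h$ of multiplicity $e = e_{\bar a}$ and substitute $x = \bar a + p y$. Expanding via Taylor, $h(\bar a + p y) = \sum_{k \geq 0} b_k p^k y^k$ with $b_k = h^{(k)}(\bar a)/k! \in \mathbb{Z}$; the multiplicity hypothesis gives $v_p(b_k) \geq 1$ for $k < e$ and $v_p(b_e) = 0$, while $v_p(b_k) \geq 1$ for $k > d$ since $\bar h$ has degree $d$. Hence each $b_k p^k$ is divisible by $p$, and the $p$-content $v := \min_k v_p(b_k p^k)$ satisfies $1 \leq v \leq e$, the upper bound coming from the $k = e$ term. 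Writing $h(\bar a + p y) = p^v \tilde h(y)$ with $\tilde h \in \mathbb{Z}[y]$ and $\tilde h \not\equiv 0 \pmod p$, the coefficient $b_k p^{k-v}$ of $y^k$ in $\tilde h$ is nonzero mod $p$ only when $v_p(b_k) + k = v$, which forces $k \leq v$; thus $d' := \deg_p \tilde h$ satisfies the chain $d' \leq v \leq e$.

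The lifts of $\bar a$ to $\mathcal{A}_m(h)$ correspond to $y \pmod{p^{m-1}}$ with $p^v \tilde h(y) \equiv 0 \pmod{p^m}$. If $v \geq m$, every $y$ works, giving $p^{m-1}$ lifts; since then $m \leq v \leq e$ forces $\lceil m/e \rceil = 1$, this is $\leq e \cdot p^{m - \lceil m/e \rceil}$ as required. Otherwise the condition becomes $\tilde h(y) \equiv 0 \pmod{p^{m-v}}$, which depends only on $y \pmod{p^{m-v}}$, so the count of lifts is $p^{v-1} \cdot \#\mathcal{A}_{m-v}(\tilde h)$. Applying the inductive hypothesis to $\tilde h$ at the smaller modulus $m - v < m$ bounds the lifts of $\bar a$ by $d' \cdot p^{m - 1 - \lceil (m-v)/d' \rceil}$. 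The chain $d' \leq v \leq e$ gives $(m-v)/d' \geq (m-v)/v = m/v - 1 \geq m/e - 1$, hence $\lceil (m-v)/d' \rceil \geq \lceil m/e \rceil - 1$, so the lifts of $\bar a$ are indeed $\leq e \cdot p^{m - \lceil m/e \rceil}$.

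Summing over all roots $\bar a$ of $\bar h$, and using $\sum_{\bar a} e_{\bar a} \leq d$ together with $\lceil m/e_{\bar a} \rceil \geq \lceil m/d \rceil$ (from $e_{\bar a} \leq d$), yields $\#\mathcal{A}_m(h) \leq d \cdot p^{m - \lceil m/d \rceil}$. The main obstacle is lining up the chain $d' \leq v \leq e$ so that the ceiling-function inequality $\lceil (m-v)/d' \rceil \geq \lceil m/e \rceil - 1$ comes out just tight enough; this is exactly where the somewhat unusual-looking exponent $\lceil m/d \rceil$ in the statement is forced upon us.
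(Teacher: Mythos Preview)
Your argument is correct and is essentially the paper's own proof: both substitute $x=\bar a+py$, pull out the $p$-content $v$ (the paper's $r(a)$), observe that the resulting polynomial $\tilde h$ (the paper's $H_a$) has $\deg_p\tilde h\le v$, and then induct. Your induction on $m$ alone is slightly cleaner than the paper's induction on $m+d$, and your explicit tracking of the multiplicity $e_{\bar a}$ makes the chain $d'\le v\le e$ transparent, but the underlying mechanism and the final ceiling-function bookkeeping are identical. One tiny omission (shared with the paper): when $d'=0$ the inductive hypothesis does not literally apply, but then $\tilde h$ is a $p$-adic unit and $\mathcal A_{m-v}(\tilde h)=\varnothing$, so that case contributes nothing.
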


Proposition \ref{prop:KeyBound} is best possible:
Let $f(x) = \prod_{i=1}^d (x-ip^r)$ where $p>d \geq 1$. Then $ f(x) \equiv 0 \pmod {p^m}$ where $m=dr+1$
exactly when $x \equiv ip^r \pmod {p^{r+1}}$ for $1\leq i\leq d$.
Therefore the number of solutions here is $d p^{m}/p^{r+1}  = d p^{m-\lceil \frac m{d}\rceil}$.




Cam Stewart \cite[(40), (44)]{St} obtained a similar upper bound but  in terms of $\deg h$ rather than $\deg_p h$, and a slightly weaker family of examples (which yielded $(d-1) p^{m-\lceil \frac m{d}\rceil} $ solutions).
 
 \begin{proof} By induction on $m+d$:
   If $a\in \mathcal A_1$ then $a$ is a root of $h(x) \pmod p$ and so 
  $ \#  \mathcal A_1 \leq \deg_p h=d$.
  Moreover if $a\in \mathcal A_m$  then $a\in \mathcal A_1$ and so 
   \[
  \#  \mathcal A_m \leq \deg_p h \cdot p^{m-1}.
  \]
  This implies the result for $1\leq m\leq d$. So now assume that  $m> d\ge 1$.

 For given $a\in \mathcal A_1$ let $\frac{h^{(j)}(a)}{j!}  =c_j(a) p^{e_j(a)}$  where $p\nmid c_j(a)$ and $e_j(a)\geq 0$.
Let 
\[
r(a):=\min_{j\geq 0} (e_j(a)+j)
\]
 with $J(a):=\{ j: e_j(a)+j=r(a)\}$ so that if $j\in J(a)$ then $j\leq r(a)$.  (Note that $e_0(a)\ge 1$ as $a\in  \mathcal A_1$, and therefore $r(a)\geq 1$.)
 Now if $j<r(a)$ then $1\leq r(a)-j \leq e_j(a)$ and so $p$ divides $h^{(j)}(a)$. Therefore  $(x-a)^{r(a)}$ divides $h(x) \pmod p$ so that $r(a)\leq d<m$.
We have
 \[
 h(a+kp) = \sum_{i\geq 0}    c_{i}(a) p^{e_{i}(a)+i}  k^i  = p^{r(a)} H_a(k)
 \]
 where
 \[
   H_a(k):= \sum_{i\geq 0}    c_{i}(a) p^{e_{i}(a)+i-r(a)}  k^i \equiv   \sum_{j\in J(a)}    c_j(a)    k^j \pmod {p}, 
 \]
so  that $d_a:=\deg_p H_a \leq \max_{j\in J(a)} j\leq r(a)$.

Now     $ h(a+kp)\equiv 0 \pmod {p^m}$ if and only if $H_a(k)  \equiv 0 \pmod {p^{m-r(a)}} $.  
This is determined by the value of $k$ mod $p^{m-r(a)}$ and yet we need to count the number of $k$ mod $p^{m-1}$
and so the number of such $k$ is $p^{r(a)-1}\cdot  \# \mathcal A_{m-r(a)}(H_a)$.
 We deduce that 
 \[
 \# \mathcal A_m(h) = \sum_{a\in \mathcal A_1(h)} p^{r(a)-1} \cdot  \# \mathcal A_{m-r(a)}(H_a).
 \]
 
 Now $\prod_{a\in  \mathcal A_1} (x-a)^{r(a)}$ divides $h(x) \pmod p$ and so 
 \[
  \sum_{a\in \mathcal A_1(h)} d_a\leq   \sum_{a\in \mathcal A_1(h)}  r(a) \leq \deg_p h.
 \]
 We have $d_a+(m-r(a))\leq m < \deg_p h +m$ so we proceed by induction:
  \begin{align*}
 \# \mathcal A_m(h) &= \sum_{a\in \mathcal A_1(h)}  p^{r(a)-1}  \cdot  \# \mathcal A_{m-r(a)}(H_a) \leq
  \sum_{a\in \mathcal A_1(h)}   p^{r(a)-1}  \cdot d_a  \cdot p^{m-r(a)-\lceil \frac {m-r(a)}{d_a}\rceil} 
 \\ &  \leq \sum_{a\in \mathcal A_1(h)}   r(a)  \cdot p^{m-1-\lceil \frac {m-r(a)}{r(a)}\rceil} \leq d\cdot \max_{1\leq r\leq d} p^{m -\lceil \frac {m}{r}\rceil} =d \cdot p^{m-\lceil \frac m{d}\rceil},
\end{align*}  
as $d_a  \leq r(a)$ and $ \sum_{a\in \mathcal A_1(h)}   r(a)\leq d$. 
\end{proof}

 \begin{proof} [Proof of Proposition \ref{prop: Soln count}] By Proposition \ref{prop:KeyBound}, we have
\begin{align*} 
  \# &\{ a \pmod {p^m}: h(a)\equiv 0 \pmod {p^m},\ h_-(a)\not\equiv 0 \pmod p\} \\
       &\leq 
  \# \mathcal A_m(h_+)  \leq D p^{m -\lceil \frac m{D}\rceil } .
\end{align*}
   \end{proof}


\section{Complete character-exponential sum estimates}

Suppose that $f(x), g(x)\in \mathbb Q(x)$  and $\chi$ is a character mod $p^m$ of order $r_\chi\geq 1$.
We need bounds on the \emph{complete character-exponential sums}
\begin{equation}  \label{eq: mixed sum}
 \sum_{n  \pmod{p^m}}  \chi(f(n)) e\bigg( \frac{ g(n)} {p^m }  \bigg)    .
 \end{equation}
 Note that if   $r_\chi$ divides $r_f$  then 
\[
\chi(f(n))=\chi(c) \chi(F(n)^{r_f})=\chi(c) \chi^{r_f}(F(n)) = \chi(c)\chi_0(F(n)),
\]
 where $\chi_0$ is principal, which can only take values $0$ and $ \chi(c)$ which makes the question significantly easier.

\subsection{Complete sums modulo primes $p$}
  Suppose that $\chi$ has order $r= r_\chi$. If 
\[
f(x)\equiv c_1F(x)^r \pmod p \text{ and }
g(x)\equiv G(x)^p-G(x)+c_2 \pmod p
\]
for some $F(x), G(x)\in \mathbb Q(x)$ and constants $c_1,c_2$ with $p \nmid c_1$, then
\[
 \chi(f(n)) e\bigg( \frac{ g(n)}p  \bigg)= \begin{cases}
  \chi(c_1) e( \frac{ c_2}p) &  \text{ if }   f_-(n)f_+(n)g_-(n)\not \equiv 0 \pmod p;\\
    0 &  \text{ if }   f_-(n)f_+(n)g_-(n)\equiv 0 \pmod p;
  \end{cases} 
\]
and so \eqref{eq: mixed sum} has size $\geq p-\deg (f_-f_+g_-)$. 
We call such pairs $(f(x), g(x))$ \emph{degenerate}.

The main result of \cite{CP} (building on work of Weil \cite{We},  \cite[Proposition 12.11]{IK}  and many others)
implies that if $(f(x), g(x))$ is not degenerate and $\chi$ is non-principal  then
\begin{equation} \label{Classic mod p}
 \sum_{n\pmod p} \chi(f(n)) e\bigg( \frac{ g(n)}p  \bigg) \leq (2\deg f + 2\deg g -1) \sqrt{p}.
 \end{equation} 
  \eqref{Classic mod p} also holds in the degenerate case if $g(x)$ is not a constant mod $p$, since then $\deg g \geq p$.
We can therefore deduce the following for primitive characters.

 \begin{prop} \label{prop: KeyEstimate3a}  
If $f(x), g(x)\in \mathbb Q(x)$ and $\chi$ is  a   character mod $p$ of order $r\geq 1$  then
\[
 \bigg|     \sum_{n  \pmod{p}}  \chi(f(n)) e\bigg( \frac{ g(n)} {p}  \bigg)         \bigg|
 \leq \  (2\deg_p f + 2 \deg_p g -1) \,  p^{1/2}  ,
  \]
 unless \text{\rm (i)} $g(x)\equiv  c_2 \pmod p$, and \text{\rm (ii)} $\chi$ is principal or $f(x)\equiv c_1F(x)^r \pmod p$ 
 for some $F(x)\in \mathbb Q(x)$ and constants $c_1,c_2$.
     \end{prop}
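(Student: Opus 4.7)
The plan is to split the argument into two cases based on whether $\chi$ is principal, reducing the non-principal case to \eqref{Classic mod p} and handling the principal case directly.

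Suppose first that $\chi$ is non-principal. If $(f,g)$ is non-degenerate mod $p$, then \eqref{Classic mod p} yields the result immediately. Otherwise $f \equiv c_1 F^r \pmod p$ and $g \equiv G^p - G + c_2 \pmod p$ for some $F, G \in \mathbb F_p(x)$ and constants. If $G$ is constant then $g \equiv c_2 \pmod p$, and both conditions (i) and (ii) of the exception hold; if $G$ is non-constant, then the Frobenius forces $\deg(G^p) = p \deg G$, so $G^p - G$ has degree $p \deg G \geq p$ as a rational function over $\mathbb F_p$. Hence $\deg_p g \geq p$, and the claimed bound already exceeds $p$, so is trivial.

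Suppose next that $\chi$ is principal. If $g \equiv c_2 \pmod p$ we are in the exception, so assume $g$ is non-constant mod $p$. Since $\chi$ is principal, the summand equals $e(g(n)/p)$ when $f_+(n) f_-(n) g_-(n) \not\equiv 0 \pmod p$ and vanishes otherwise. I would write the sum as $T - U$, where
\[
T = \sum_{\substack{n \pmod p \\ g_-(n) \not\equiv 0}} e\!\left(\frac{g(n)}{p}\right) \quad \text{and} \quad U = \sum_{\substack{n \pmod p \\ g_-(n) \not\equiv 0 \\ f_+(n) f_-(n) \equiv 0}} e\!\left(\frac{g(n)}{p}\right).
\]
The Weil bound for exponential sums of rational functions (the trivial-character case of \eqref{Classic mod p}, available directly from \cite{CP}) gives $|T| \leq (2\deg_p g - 1)\, p^{1/2}$ unless $g \equiv G^p - G + c_2 \pmod p$ for some non-constant $G$, and in that subcase the degree-inflation argument above again yields $\deg_p g \geq p$, making the target bound trivial. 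For $U$, the number of roots of $f_+ f_-$ mod $p$ is at most $2\deg_p f$, so $|U| \leq 2\deg_p f \leq 2\deg_p f \cdot p^{1/2}$. Adding the two contributions gives the Proposition.

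The main obstacle is purely organizational: ensuring that the two flavours of ``exception'' — the degeneracy hypothesis underlying \eqref{Classic mod p} and the exception (i)+(ii) in the Proposition — line up correctly. The only genuine analytic input beyond the cited bound is the Weil estimate for the trivial character, plus the observation that $\deg(G^p - G) \geq p$ whenever $G \in \mathbb F_p(x)$ is non-constant, which collapses every remaining ``bad'' subcase to a trivial bound.
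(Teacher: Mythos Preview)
Your proof is correct and follows essentially the same approach as the paper's: reduce the non-principal case to \eqref{Classic mod p}, note that degeneracy with $g$ non-constant forces $\deg_p g\ge p$ and hence a trivial bound, and in the principal case compare with the pure exponential sum (your $T$) and bound the discrepancy (your $U$) by counting zeros of $f_+f_-$. The only cosmetic difference is that the paper phrases the principal-case step as ``replace $\chi$ by a primitive character and $f$ by $1$'' and then self-applies the Proposition, whereas you invoke the Weil bound for $T$ directly from \cite{CP}; your root count $|U|\le 2\deg_p f$ is in fact the accurate one (the paper writes $\deg_p f$, which is a harmless slip since both feed into the same final inequality).
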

     
\begin{proof} This follows from \eqref{Classic mod p} for non-principal characters.
    
 Suppose that  $\chi=\chi_0$ is principal and $g(x)\not\equiv c_2 \pmod p$.
Replace $\chi$ by a primitive character and $f$ by $1$; the difference in value between the two sums is 
$\leq  \#\{ a\pmod p: f(a)\equiv 0 \pmod p\}\leq \deg_p f$, and so we get the upper bound
\[
\deg_p f + ( 2 \deg_p g -1) \,  p^{1/2}\leq \  (2\deg_p f + 2 \deg_p g -1) \,  p^{1/2} 
\]
  by Proposition \ref{prop: KeyEstimate3a} for primitive characters.
  \end{proof}

 We now observe that the set of exceptions in Proposition \ref{prop: KeyEstimate3a} is finite and computable. Recall that $\Delta(f)$ is $r_fc_f$ times the discriminant of the product of the distinct irreducible factors of $f_-(x)f_+(x)$, with $r_f$ as defined by \eqref{eq: Condition}, and $c_f$ the leading coefficient of $f_-(x)f_-(x)$.

 \begin{lemma} \label{lem: f(t)=cF(t)^r} Suppose that $f(x)\in \mathbb Q(x)$ and that
$p$ is a prime for which $f(x)\equiv c_1F(x)^r \pmod p$ for some $F(x)\in \mathbb Q(x)$,  and integers $c_1$ and $r\geq 2$. Then $p$ divides $\Delta(f)$ or $r$ divides $r_f$, with $r_f$ as defined by \eqref{eq: Condition}.
 \end{lemma}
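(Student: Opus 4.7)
The plan is to reduce to a statement about unique factorization. First, I would verify that $r_f$ equals $\gcd_i(e_i)$: since $\mathbb Q[x]$ is a UFD and each $f_i$ (being irreducible in $\mathbb Z[x]$) is, up to units, irreducible in $\mathbb Q[x]$, any representation $f = c F^r$ in $\mathbb Q(x)$ forces the exponent of each $f_i$ in $F^r$ to be $e_i/r$, so the largest admissible $r$ is precisely $\gcd_i(e_i)$. Hence it suffices to prove that whenever $p\nmid\Delta(f)$, the hypothesis $f(x)\equiv c_1 F(x)^r\pmod p$ forces $r\mid e_i$ for every $i$.

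Next I would unpack what $p\nmid\Delta(f)$ buys us. Since $\Delta(f) = r_f\cdot\operatorname{disc}(c_f\prod_i f_i)$, having $p\nmid\Delta(f)$ gives three things: $p\nmid r_f$ (not needed here directly, but harmless); $p\nmid c_f$, so $p$ divides neither $c_+$, $c_-$, nor any leading coefficient of the $f_i$, whence $\deg \bar f_i = \deg f_i$ for each $i$, where $\bar f_i$ denotes reduction mod $p$; and $\operatorname{disc}(\bar c_f\prod_i\bar f_i)\ne 0$, so $\prod_i\bar f_i$ is separable over $\overline{\mathbb F_p}$. In particular, the root sets of the $\bar f_i$ are pairwise disjoint subsets of $\overline{\mathbb F_p}$, and each $\bar f_i$ is itself separable.

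Now I would pass to $\overline{\mathbb F_p}(x)$, where the identity $f\equiv c_1 F^r\pmod p$ still holds. For each $i$, pick any root $\alpha\in\overline{\mathbb F_p}$ of $\bar f_i$. Consider the standard $(x-\alpha)$-adic valuation $v_\alpha$ on $\overline{\mathbb F_p}(x)$. From the explicit factorisation $\bar f = (\bar c_+/\bar c_-)\prod_k\bar f_k^{e_k}$ and the disjointness of root sets, we get $v_\alpha(\bar f) = e_i$. On the other hand, $v_\alpha(c_1\bar F^r) = r\, v_\alpha(\bar F)$, so $r\mid e_i$. Doing this for every $i$ shows $r\mid\gcd_i(e_i) = r_f$, which is the desired conclusion.

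The main obstacle I anticipate is bookkeeping rather than substance: making sure the reduction $f\bmod p$ is well-defined in the strong form needed (degrees preserved, irreducible factors still irreducible-or-at-least-coprime-in-pairs, no hidden cancellation between numerator and denominator), and that the slightly ambiguous grouping $f = (c_+/c_-)\prod f_i^{e_i}$ versus the coprime representation $f_+/f_-$ doesn't introduce extra prime factors outside $\Delta(f)$. Once one confirms that $p\nmid\Delta(f)$ already forces $p\nmid c_fc_+c_-\prod_i(\text{lc}\,f_i)$ and kills all resultants $\operatorname{Res}(\bar f_i,\bar f_j)$ and discriminants $\operatorname{disc}(\bar f_i)$, the valuation argument above is immediate.
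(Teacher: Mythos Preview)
Your proposal is correct and follows essentially the same route as the paper. The paper factors $f=c\prod_i f_i^{e_i}$, notes $r_f=\gcd_i e_i$, observes that when $p\nmid\Delta(f)$ the product $\prod_i f_i$ has distinct irreducible factors mod $p$, so the exponent multiset of $\bar f$ in $\mathbb F_p(x)$ is exactly $\{e_i\}$, and concludes $r\mid r_f$; you do the same thing with $(x-\alpha)$-adic valuations over $\overline{\mathbb F_p}$ in place of irreducible factors over $\mathbb F_p$, which is only a cosmetic reformulation.
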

 
\begin{proof}  We factor $f(x)=c\prod_i f_i(x)^{e_i}$ in  $\mathbb Q(x)$, so that $\text{gcd}_i e_i=r_f$.
 Now if $p\nmid \Delta(f)$ then $\prod_i f_i(x)$ will factor  into distinct irreducible factors mod $p$. But then   the exponents of the irreducible factors of 
 $f(x) \pmod p$ will have exponent set $\{ e_i: i\geq 1\}$, sometimes repeated, which has gcd $r_f$, and so if
 $f(x)\equiv c_1F(x)^r \pmod p$ then $r$ divides $r_f$.
 \end{proof}

\subsection{Complete sums modulo prime powers}
The cases with $m\geq 2$  have a distinctly different flavour. Our main goal in this  and the next subsection is to prove the following:

\begin{theorem} \label{thm:chi-expand2} 
Suppose that $f(x),g(x)\in \mathbb Q(x)$ where  $f(x)$ and $g(x)$ are not both constants and $p$ is an odd prime.
For any primitive character  $\chi \pmod {p^m}$ with $m\geq 2$, there exists a non-zero integer $C_{\chi}\in [1,p^{m-1}]$ that is not divisible by $p$ (defined in the proof of Proposition \ref{prop:chi-expand}) such that if we take 
\begin{equation} \label{eq: define H}
h(x)=h_{\chi,f,g}(x):= g'(x)+C_{\chi}f'(x)/f(x)=p^\tau H(x)
\end{equation}
 for some $\tau=\tau_{\chi,f,g}\geq 0$ where $H(x)=H_{\chi,f,g}(x)$ is non-zero mod $p$
 then
\[
\bigg| \frac 1{p^m} \sum_{n \pmod {p^m}}  \chi(f(n)) e\bigg( \frac{ g(n)}{p^{m}} \bigg) \bigg| \leq D p^{ -\big\lceil \frac { m-\tau-1}{2D}\big\rceil },
\]
where $D:= \deg_p H_+$.
 \end{theorem}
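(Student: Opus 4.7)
My plan is to prove Theorem \ref{thm:chi-expand2} by the $p$-adic stationary-phase (Postnikov--Heath-Brown) method. I take as given the earlier Proposition \ref{prop:chi-expand}, which supplies, for odd $p$ and primitive $\chi \pmod{p^m}$, an integer $C_\chi \in [1, p^{m-1}]$ coprime to $p$ for which
\[
\chi(1 + p^k c) = e\!\left(\frac{C_\chi c}{p^{m-k}}\right), \qquad \text{valid whenever } 2k \ge m \text{ and } c \in \mathbb Z.
\]
This is the $p$-adic logarithm expansion of $\chi$; the quadratic and higher-order terms of $\log(1+y) = y - y^2/2 + \cdots$ all vanish modulo $p^m$ once $2k \ge m$ (which is where $p$ odd enters, via the factor $1/2$).

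Next, I set $k := \lceil m/2 \rceil$ and decompose each $n \pmod{p^m}$ uniquely as $n = a + p^k b$ with $a \pmod{p^k}$ and $b \pmod{p^{m-k}}$. For the generic residues $a$, those satisfying $f_+(a) f_-(a) g_-(a) \not\equiv 0 \pmod p$, the first-order Taylor expansion is exact modulo $p^m$:
\[
\frac{f(a+p^k b)}{f(a)} \equiv 1 + p^k b \, \frac{f'(a)}{f(a)} \pmod{p^m}, \qquad g(a+p^k b) \equiv g(a) + p^k b\, g'(a) \pmod{p^m}.
\]
Combining these with the character expansion gives
\[
\chi(f(a+p^k b)) \, e\!\left(\frac{g(a+p^k b)}{p^m}\right) = \chi(f(a)) \, e\!\left(\frac{g(a)}{p^m}\right) \, e\!\left(\frac{b\, h(a)}{p^{m-k}}\right),
\]
with $h(a) = g'(a) + C_\chi f'(a)/f(a)$ the rational function defined in \eqref{eq: define H}. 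The inner sum over $b \pmod{p^{m-k}}$ is then a complete geometric progression equal to $p^{m-k}$ when $h(a) \equiv 0 \pmod{p^{m-k}}$ and vanishing otherwise.

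It remains to count the surviving residues $a$. Writing $h = p^\tau H$ with $H \not\equiv 0 \pmod p$, the congruence translates into $H(a) \equiv 0 \pmod{p^{m-k-\tau}}$ (together with $H_-(a) \not\equiv 0 \pmod p$). Since $k \ge m-k$, each class modulo $p^{m-k-\tau}$ lifts to $p^{2k-m+\tau}$ classes modulo $p^k$, and Proposition \ref{prop: Soln count} bounds the number of solutions modulo $p^{m-k-\tau}$ by $D p^{(m-k-\tau) - \lceil (m-k-\tau)/D \rceil}$, with $D = \deg_p H_+$. Collecting all factors and dividing by $p^m$ yields a bound matching the claim's form $D p^{-\lceil (m-\tau-1)/(2D) \rceil}$, where the denominator $2D$ emerges from $m - k \approx m/2$: this is what limits the saving once we combine the $p^{m-k}$ from the inner sum with the zero-counting estimate.

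The principal obstacle will be the treatment of the exceptional residues $a \pmod{p^k}$ for which $f_+(a)$, $f_-(a)$, or $g_-(a)$ vanishes mod $p$; here neither the Taylor expansion nor the character evaluation is directly valid, so their contribution must be bounded separately. I will stratify these bad $a$'s by the $p$-adic valuations of $f_\pm(a)$ and $g_-(a)$ (noting that $\chi(f(a+p^kb))=0$ whenever $p \mid f_+(a+p^k b)$, or else factoring out an appropriate power of $p$) and verify that they collectively contribute at most $O_{\deg f,\deg g}(p^{m-1})$, which is absorbed into the claimed bound. A secondary subtlety is getting the precise exponent $\lceil (m-\tau-1)/(2D) \rceil$ (rather than a slightly weaker form) when $\tau \ge 1$; this may demand either adjusting the splitting parameter $k$ as a function of $\tau$, or retaining the quadratic term of the Taylor--Postnikov expansion so that it can be exploited as a Gauss-sum-type phase, but the overall stationary-phase mechanism remains the same.
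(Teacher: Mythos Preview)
Your approach is essentially the paper's: split $n=a+bp^\ell$, use the Postnikov expansion (Proposition~\ref{prop:chi-expand}) so that only residues with $h(a)\equiv0$ survive the $b$-sum, then count those residues with Proposition~\ref{prop: Soln count}. Two corrections are in order.

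First, the ``exceptional residues'' are a non-issue. By the paper's conventions $\chi(f(n))=0$ whenever $p\mid f_+(n)f_-(n)$ and $e(g(n)/p^m)=0$ whenever $p\mid g_-(n)$; since $a+bp^\ell\equiv a\pmod p$, every such $a$ contributes $0$ to the sum automatically, for every $b$. There is nothing to stratify or bound separately.

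Second, for $\tau\ge1$ the sharp exponent does require the splitting parameter $\ell=\lceil(m-\tau)/2\rceil$ rather than $\lceil m/2\rceil$, and the observation that licenses this smaller choice is precisely what you have not identified: the second formula of Proposition~\ref{prop:chi-expand} already packages the full phase as $\sum_{j\ge1}\frac{(bp^\ell)^j}{j!}\,h^{(j-1)}(a)$, and since $p^\tau\mid h$ forces $p^\tau\mid h^{(j)}$ for every $j$, the $j\ge2$ terms vanish modulo $p^m$ as soon as $2\ell+\tau\ge m$ (not merely $2\ell\ge m$). With this $\ell$ one has $m-\ell-\tau=\lfloor(m-\tau)/2\rfloor$, and Proposition~\ref{prop: Soln count} then gives exactly $Dp^{m-\lceil\lfloor(m-\tau)/2\rfloor/D\rceil}=Dp^{m-\lceil(m-\tau-1)/(2D)\rceil}$. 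No Gauss-sum refinement is needed.
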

 
 If $f(x)$ is not a constant, then $f'/f$ (in reduced form) has a non-constant denominator.  Moreover, over a splitting field, the denominator of $f'/f$ is a product of distinct linear factors. On the other hand if $(x-\alpha)^{-e}\| g(x)$ for some $e\geq 1$ then 
 $(x-\alpha)^{-e-1}\| g'(x)$; that is, every linear factor in the denominator of $g'$ appears to at least the second power and so there is no cancelation with the denominators of $f'/f$. In particular we deduce that $h(x)\ne 0, H(x)\ne 0$ and that $\tau$ is well-defined since $C_\chi\ne 0$. 
 
 If $f$ is a constant then $h(x)=g'(x)$ which is non-zero as $g(x)$ is not a constant, and so again $h(x)\ne 0, H(x)\ne 0$ and  $\tau$ is well-defined.

If $\chi \pmod {p^m}$ is induced from a primitive character $\pmod {p^{\ell}}$ for some $\ell\in [1,m-1]$ then there exists a primitive character  $\psi \pmod {p^m}$ for which $\chi=\psi^{p^{m-\ell}}$. We then define 
$C_{\chi}:\equiv p^{m-\ell} C_{\psi} \pmod {p^{m-1}}$ with $C_\chi \in [1,p^{m-1}]$, where $C_{\psi}$ is defined in the proof of Proposition \ref{prop:chi-expand}.
If $\chi \pmod {p^m}$ is principal, then $C_{\chi}=p^{m-1}$. By the same proof as in the previous paragraph we see that $H(x)\ne 0$ and  $\tau$ is well-defined unless $\chi$ is principal and $g(x)$ is a constant in which case we will let $\tau=m$.
 
 \begin{corollary} \label{cor:chi-expand2} 
Suppose that $f(x),g(x)\in \mathbb Q(x)$ where  $f(x)$ and $g(x)$ are not both constants and $p$ is an odd prime.
For any  character  $\chi \pmod {p^m}$ with $m\geq 2$, define
$h_{\chi,f,g}(x), H_{\chi,f,g}$ and $\tau_{\chi,f,g}$ as in \eqref{eq: define H}  (and as in the previous paragraph for imprimitive $\chi$). Then 
\[
\bigg| \frac 1{p^m} \sum_{n \pmod {p^m}}  \chi(f(n)) e\bigg( \frac{ g(n)}{p^{m}} \bigg) \bigg| \leq D p^{ -\big\lceil \frac { m-\tau-1}{2D}\big\rceil },
\]
where $D:= \deg_p H_+\leq 2(\deg f + \deg g)$.
 \end{corollary}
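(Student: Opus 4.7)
The plan is to reduce the non-primitive cases to the primitive case, which is exactly Theorem~\ref{thm:chi-expand2}. Thus assume $\chi\pmod{p^m}$ is imprimitive.

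\textbf{Imprimitive, non-principal $\chi$.} Here $\chi$ has conductor $p^\ell$ with $1\le \ell\le m-1$, and by the corollary's setup we write $\chi=\psi^{p^{m-\ell}}$ for a primitive $\psi\pmod{p^m}$. Set $F(x):=f(x)^{p^{m-\ell}}$. Since $\chi(f(n))=\psi(f(n))^{p^{m-\ell}}=\psi(F(n))$ whenever $p\nmid f_+(n)f_-(n)$, with both sides vanishing otherwise, Theorem~\ref{thm:chi-expand2} applied to the primitive pair $(\psi,F)$ bounds our sum in terms of $\tau_{\psi,F,g}$ and $D_{\psi,F,g}$. A direct derivative computation gives
\[
h_{\psi,F,g}(x)=g'(x)+C_\psi\frac{F'(x)}{F(x)}=g'(x)+p^{m-\ell}C_\psi\frac{f'(x)}{f(x)},
\]
so the defining relation $C_\chi\equiv p^{m-\ell}C_\psi\pmod{p^{m-1}}$ makes $h_{\chi,f,g}-h_{\psi,F,g}$ an integer multiple of $p^{m-1}f'(x)/f(x)$. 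When $\tau_{\chi,f,g}\le m-2$ this perturbation has too large a $p$-valuation to affect either the $p$-valuation or the mod-$p$ reduction of $h_{\chi,f,g}$, so $\tau_{\chi,f,g}=\tau_{\psi,F,g}$ and $D_{\chi,f,g}=D_{\psi,F,g}$, and the bounds coincide. When $\tau_{\chi,f,g}\ge m-1$ the exponent $-\lceil(m-\tau-1)/(2D)\rceil$ vanishes and the bound $\le D$ is trivial since each summand has modulus at most $1$.

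\textbf{Principal $\chi$.} Then $C_\chi=p^{m-1}$, so $h_{\chi,f,g}(x)\equiv g'(x)\pmod{p^{m-1}}$, and $\chi(f(n))$ is the indicator of $p\nmid f_+(n)f_-(n)$. Split
\[
\sum_n\chi(f(n))\,e\!\Big(\tfrac{g(n)}{p^m}\Big)=\sum_n e\!\Big(\tfrac{g(n)}{p^m}\Big)-\sum_{p\mid f_+(n)f_-(n)}e\!\Big(\tfrac{g(n)}{p^m}\Big).
\]
The first (pure exponential) sum obeys the claimed bound with $h=g'$ by rerunning the inductive Hensel argument from Theorem~\ref{thm:chi-expand2} in the absence of any character; this is strictly simpler. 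The second sum is bounded trivially using Proposition~\ref{prop: Soln count}, which gives at most $O(p^{m-1})$ values of $n\pmod{p^m}$ with $f_+(n)f_-(n)\equiv 0\pmod p$, contributing $O(1/p)$ after dividing by $p^m$---easily absorbed into the stated bound.

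The main obstacle is the bookkeeping in the imprimitive case: verifying that the $p$-adic invariants $\tau$, $H$ and $D$ defined for $(\chi,f,g)$ genuinely match those produced by Theorem~\ref{thm:chi-expand2} for $(\psi,F,g)$ in the regime where the bound is non-trivial. The relation $C_\chi\equiv p^{m-\ell}C_\psi\pmod{p^{m-1}}$ is tuned precisely so that this comparison works.
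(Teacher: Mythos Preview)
Your treatment of the imprimitive non-principal case follows the paper's approach exactly: write $\chi=\psi^{p^{m-\ell}}$, replace $f$ by $f^{p^{m-\ell}}$, and use the congruence $h_{\chi,f,g}\equiv h_{\psi,f^{p^{m-\ell}},g}\pmod{p^{m-1}}$ to match the invariants $\tau$ and $D$ in the non-trivial regime $\tau\le m-2$. This part is fine.

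However, your separate treatment of the principal case has a genuine gap. After splitting off the terms with $p\mid f_+(n)f_-(n)$ and bounding them trivially by their count, you obtain an error of size $O(\deg f/p)$. But the target bound $Dp^{-\lceil(m-\tau-1)/(2D)\rceil}$ can be arbitrarily smaller than $1/p$ once $m$ is large and $\tau$ is small --- and for principal $\chi$ with non-constant $g$ one typically has $\tau=v_p(g')$, which is bounded independently of $m$. For a concrete instance take $f(x)=x$, $g(x)=x^2$, $p=3$, $m=100$: then $h_{\chi,f,g}=(2x^2+3^{99})/x$, so $\tau=0$, $D=2$, and the claimed bound is $2\cdot 3^{-25}$, whereas your error term is of order $1/3$. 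The error is not ``easily absorbed.''

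The paper avoids this entirely by handling the principal character the \emph{same} way as the other imprimitive cases: one can write the principal character as $\psi^{p^{m}}$ for a suitable primitive $\psi\pmod{p^m}$, replace $f$ by $f^{p^{m}}$, and apply Theorem~\ref{thm:chi-expand2} directly to $(\psi,f^{p^m},g)$. The same mod-$p^{m-1}$ comparison you already carried out then matches $\tau$ and $D$, and no splitting into good and bad $n$ is needed. Your argument is easily repaired by dropping the special principal-case decomposition and letting $\ell=0$ fall under the general imprimitive argument.
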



\subsection{From character values to exponential values}
 Let $\chi=\chi_{p^m}$ be a primitive character $\pmod {p^m}$ for a prime $p\geq 3$.  
Let $v_p(r)=v$ where $p^{-v}r$ does not have $p$ dividing the numerator of denominator of $r$.
Now for $1 \le \ell \le m-1$,  $(1+p^\ell)^r\equiv 1+r p^\ell \pmod {p^{\ell+v_p(r)+1}}$.
We deduce, by taking $r=p^{m-\ell-1}$ and then $r=p^{m-\ell}$
that  $\chi(1+p^\ell)$ is a primitive $p^{m-\ell}$th root of unity, 
and indeed $\chi(1+kp^\ell)$  is also a primitive $p^{m-\ell}$th root of unity for any integer $k$ which is not divisible by $p$. This flexibility in the selection of $k$ allows us to make a choice that will simplify the transition from $\chi$-values to exponential values.

For given integers $m>\ell \geq 1$ we define  $J=J_{p,\ell, m}$ to be the smallest positive integer such that    $p^{j\ell}/j\equiv 0 \pmod {p^m}$ for all $j>J_{p,\ell, m}$. This implies that 
\[
\log (1+p^\ell x)\equiv \sum_{j=1}^{J} \frac{(-1)^{j-1}}j (p^\ell x)^j \pmod {p^m} \text{ and } 
 \sum_{j>J} \frac{ (p^\ell x)^j}{j!}  h^{(j-1)}(a)\equiv 0 \pmod {p^m}
\]
for any given rational function $h(x)\in \mathbb Q(x)$ and any integer $a$ for which $p\nmid h_-(a)$, since $k!$ divides the coefficients of the  Laurent series expansion of $h^{(k)}(x)$ (with $k=j-1$).  Note that 
if $m>\ell\geq \frac{m+1_{p=2}}2$ then $J=1$  and if  $  \frac{m+1_{p=2}}2>\ell\geq \frac{m+1_{p=2}+1_{p=3}}3$ then $J=2$.

\begin{prop} \label{prop:chi-expand} Let $\chi$ be a primitive character $\pmod {p^m}$ where $p^m$ is a prime power $>2$.
There exists an  integer $C_\chi$, coprime with $p$,  such that if $p^\ell>2$ with $1\leq \ell\leq m-1$ then for any $p$-integer $r$ we have
\[
\chi(1+rp^\ell) = e\bigg( \frac{ -C_{\chi} \mathcal L(1+rp^\ell) }{p^{m }}  \bigg) \text{ where } \mathcal L(1+rp^\ell) = \sum_{j=1}^{J_{p,\ell, m}} \frac{(-rp^\ell)^j}{j}.
\]
If $n=a+bp^\ell$ where $\chi(f(a))\ne 0$ then
\[
 \chi(f(n)) e\bigg( \frac{ g(n)}{p^{m}} \bigg) =  \chi(f(a)) e\bigg( \frac{ g(a)}{p^{m}} \bigg)\cdot  
 e\bigg(   \frac{   \sum_{j= 1}^{J_{p,\ell, m}}  \frac{(bp^\ell)^j}{j!}  h^{(j-1)}(a) }  {p^m}  \bigg)
\]
where $h(x):= g'(x)+C_{\chi}f'(x)/f(x)$.
\end{prop}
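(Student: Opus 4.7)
The plan is to recognise $\mathcal{L}$ as (the negative of) a truncated $p$-adic logarithm, which converts multiplication on principal units into addition modulo $p^m$; this intrinsically defines $C_\chi$ from the action of $\chi$ on the smallest relevant unit subgroup. The second identity then follows from parallel Taylor expansions of $f(n)/f(a)$ and $g(n)-g(a)$, combined through $h=g'+C_\chi f'/f$.

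Set $\ell_0:=1$ for odd $p$ and $\ell_0:=2$ for $p=2$, so that $U_{\ell_0}:=1+p^{\ell_0}\mathbb{Z}/p^m\mathbb{Z}$ is a cyclic subgroup of $(\mathbb{Z}/p^m\mathbb{Z})^*$ of order $p^{m-\ell_0}$. The formal power-series identity $-\log((1+u)(1+v))=-\log(1+u)-\log(1+v)$, together with the tail bound built into $J_{p,\ell_0,m}$ (which kills terms with $j>J$ modulo $p^m$), shows that $\mathcal{L}:U_{\ell_0}\to p^{\ell_0}\mathbb{Z}/p^m\mathbb{Z}$ is a group homomorphism; a leading-term analysis giving $v_p(\mathcal{L}(1+rp^{\ell_0}))=\ell_0+v_p(r)$ shows it is injective, hence an isomorphism. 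Since $\chi$ is primitive, $\chi|_{U_{\ell_0}}$ surjects onto $\mu_{p^{m-\ell_0}}$, so $\chi|_{U_{\ell_0}}\circ\mathcal{L}^{-1}$ is a surjective character of $p^{\ell_0}\mathbb{Z}/p^m\mathbb{Z}$, necessarily of the form $t\mapsto e(-C_\chi t/p^m)$ for a unique $C_\chi\pmod{p^{m-\ell_0}}$ coprime to $p$; this defines $C_\chi$. For any admissible $\ell\geq\ell_0$ we have $U_\ell\subseteq U_{\ell_0}$, and the two truncations of $\mathcal{L}$ (at $J_{p,\ell,m}$ versus $J_{p,\ell_0,m}$) differ only by terms $(-rp^\ell)^j/j$ with $j>J_{p,\ell,m}$ that vanish mod $p^m$; the first formula therefore extends automatically to all admissible $\ell$.

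For the second identity, write $n=a+bp^\ell$ and split
\[
\chi(f(n))\,e\!\left(\frac{g(n)}{p^m}\right)=\chi(f(a))\,e\!\left(\frac{g(a)}{p^m}\right)\cdot\chi\!\left(\frac{f(n)}{f(a)}\right)\cdot e\!\left(\frac{g(n)-g(a)}{p^m}\right).
\]
The non-vanishing $\chi(f(a))\neq 0$ forces $p\nmid f_-(a)f_+(a)$, so $f(n)/f(a)\in U_\ell$. The formal identity $-\log(f(a+u)/f(a))=-\sum_{j\geq1}\frac{u^j}{j!}(f'/f)^{(j-1)}(a)$, truncated at $J=J_{p,\ell,m}$ using the tail bound from the preamble applied to $h=f'/f$, gives $\mathcal{L}(f(n)/f(a))\equiv-\sum_{j=1}^{J}\frac{(bp^\ell)^j}{j!}(f'/f)^{(j-1)}(a)\pmod{p^m}$, so by the first formula $\chi(f(n)/f(a))=e\bigl(\frac{C_\chi}{p^m}\sum_{j=1}^{J}\frac{(bp^\ell)^j}{j!}(f'/f)^{(j-1)}(a)\bigr)$. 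The ordinary Taylor expansion of $g$, truncated at the same $J$ via the same tail bound applied to $g'$, gives $g(n)-g(a)\equiv\sum_{j=1}^{J}\frac{(bp^\ell)^j}{j!}(g')^{(j-1)}(a)\pmod{p^m}$. Summing the two exponents term by term and collecting $h=g'+C_\chi f'/f$ produces the stated formula.

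The main technical point to guard is that the truncated formal logarithm (with weights $1/j$) and the truncated Taylor series (with weights $1/j!$) faithfully represent the same $p$-adic quantity modulo $p^m$ despite their different coefficient structures. This rests on the integrality statement from the preamble, namely $k!$ divides the Laurent coefficients of $h^{(k)}$, which ensures $h^{(j-1)}(a)/(j-1)!$ is a $p$-adic integer whenever $p\nmid h_-(a)$; combined with $p^{j\ell}/j\equiv 0\pmod{p^m}$ for $j>J$, the omitted tails of both series genuinely vanish mod $p^m$, so either truncation encodes the full formal $p$-adic logarithm identity and the two identifications of $\mathcal{L}(f(n)/f(a))$ used above are consistent.
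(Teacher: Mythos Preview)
Your proof is correct and rests on the same Postnikov--Gallagher--Iwaniec idea as the paper: the truncated $p$-adic logarithm $\mathcal{L}$ linearises the principal units, and $C_\chi$ encodes the character's action through this linearisation. The organisation differs in one respect. The paper defines $C_\chi$ via the \emph{exponential} side: it forms the truncated $p$-adic exponential $e^{p^\ell}\pmod{p^m}$, computes $\chi(e^{p^\ell})$ as a primitive $p^{m-\ell}$th root of unity, extracts a constant $C_{\chi,\ell}$ from that, and then checks that the $C_{\chi,\ell}$ are compatible as $\ell$ varies to obtain a single $C_\chi$; the first formula then follows from the inversion $1+rp^\ell\equiv e^{-\mathcal{L}(1+rp^\ell)}\pmod{p^m}$. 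You instead work directly on the \emph{logarithm} side, proving that $\mathcal{L}$ is an isomorphism $U_{\ell_0}\to p^{\ell_0}\mathbb{Z}/p^m\mathbb{Z}$ and reading off $C_\chi$ from the transported character $\chi\circ\mathcal{L}^{-1}$; this bypasses both the truncated exponential and the compatibility verification. The second half of the argument---the Taylor expansion of $\log(f(a+u)/f(a))$ and of $g(a+u)-g(a)$, and their combination through $h=g'+C_\chi f'/f$---is the same in both treatments, though the paper establishes the identity $\log(f(a+x)/f(a))=\sum_{j\geq 1}\frac{x^j}{j!}(f'/f)^{(j-1)}(a)$ by explicit differentiation whereas you quote it as a formal power-series fact.
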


This proposition is developed from ideas of Postnikov, Gallagher and Iwaniec \cite{Pos, Ga, Iw}.
We also obtain the following:

 \begin{corollary} \label{cor:chi-expand} Suppose that $f(x),g(x)\in \mathbb Q(x)$  and prime $p>2$.
For any primitive character  $\chi \pmod {p^m}$ with $m\geq 2$, let $h(x):= g'(x)+C_{\chi}f'(x)/f(x)$
 and suppose that $h(x)=p^\tau H(x)$ for some $\tau\geq 0$, where $H(x)$ is non-zero mod $p$.
 If   $m>  \ell\geq   \frac{m-\tau }2$ then  
\[
\sum_{n \pmod {p^m}}  \chi(f(n)) e\bigg( \frac{ g(n)}{p^{m}} \bigg) = p^\tau \sum_{a \pmod {p^\ell}}  \chi(f(a)) e\bigg( \frac{ g(a)}{p^{m}} \bigg)\cdot  
\sum_{b \pmod {p^{m-\ell-\tau}}}  e\bigg(   \frac{      b H(a) }  {p^{m-\ell-\tau}}  \bigg).
\]
\end{corollary}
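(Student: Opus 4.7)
The plan is to partition the sum over $n\pmod{p^m}$ by writing $n=a+bp^\ell$ with $a\pmod{p^\ell}$ and $b\pmod{p^{m-\ell}}$, and then apply Proposition \ref{prop:chi-expand} termwise. The first step is to dispose of the values of $a$ with $\chi(f(a))=0$: for such $a$ we have $p\mid f_+(a)$ or $p\mid f_-(a)$, and since $f_\pm(a+bp^\ell)\equiv f_\pm(a)\pmod p$, the same divisibility persists for every $b$. Hence these $a$ contribute nothing on either side, as $\chi(f(a))$ appears as an outer factor on the right-hand side.

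For the remaining $a$, Proposition \ref{prop:chi-expand} gives
\[
\chi(f(a+bp^\ell))\,e\bigg(\frac{g(a+bp^\ell)}{p^m}\bigg) = \chi(f(a))\,e\bigg(\frac{g(a)}{p^m}\bigg)\cdot e\bigg(\frac{1}{p^m}\sum_{j=1}^{J}\frac{(bp^\ell)^j}{j!}h^{(j-1)}(a)\bigg),
\]
with $J=J_{p,\ell,m}$. The crux is to show that only the $j=1$ term in the inner exponent survives modulo $p^m$. Writing $h=p^\tau H$ and using that the Taylor coefficients of a rational function are $p$-integral at any point where the denominator is a unit mod $p$ (so that $(j-1)!$ divides $h^{(j-1)}(a)$ in $\mathbb Z_{(p)}$), the $j$-th term has $p$-adic valuation at least $j\ell+\tau-v_p(j)$. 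The hypothesis $\ell\geq(m-\tau)/2$ together with $p\geq 3$ (so that $v_p(2)=0$) yields $2\ell+\tau\geq m$ for $j=2$; for $j\geq 3$, a short computation using $\ell\geq\lceil(m-\tau)/2\rceil$ and $v_p(j)\leq\log_p j$ gives $j\ell+\tau-v_p(j)\geq m$ as well. So every higher-order term contributes $e(\text{integer})=1$, and the surviving $j=1$ term reduces to $bp^{\ell+\tau}H(a)/p^m = bH(a)/p^{m-\ell-\tau}$.

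Finally, $H(a)$ is a $p$-integer under the conventions of the notation section, so $e(bH(a)/p^{m-\ell-\tau})$ depends on $b$ only modulo $p^{m-\ell-\tau}$; summing over $b\pmod{p^{m-\ell}}$ then pulls out a factor of $p^\tau$, yielding the claimed identity. The main obstacle is the uniform valuation bound for $j\geq 2$; the borderline case $p=3$, $j=3$ with small $m-\tau$ requires using the integrality of $\ell$ rather than the real inequality alone, but once this is noted the rest is straightforward $p$-adic bookkeeping on top of Proposition \ref{prop:chi-expand}.
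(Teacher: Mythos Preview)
Your argument is correct and follows essentially the same route as the paper: write $n=a+bp^\ell$, invoke Proposition~\ref{prop:chi-expand}, and show that the terms with $j\geq 2$ in the exponent vanish modulo $p^m$ via the valuation bound $j\ell+\tau-v_p(j)\geq m$, leaving only the $j=1$ contribution $bH(a)/p^{m-\ell-\tau}$. The paper's proof is more terse (it compresses the $j\geq 2$ check into the single remark ``since $p>2$'') and does not separately discuss the $\chi(f(a))=0$ terms or the borderline $p=3$, $j=3$ case, but the underlying computation is the same.
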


\begin{proof}
If $p^\tau|h(x)$ then $p^\tau|h^{(j)}(x)$ for all $j\geq1$.  Therefore if $2\ell   +\tau \geq m$ then 
$p^m$ divides  $\frac{p^{j\ell}}{j!}  h^{(j-1)}(a)$ for all $j\geq 2$ (since $p>2$). We then substitute in the second formula of Proposition \ref{prop:chi-expand} to obtain
\[
  \chi(f(n)) e\bigg( \frac{ g(n)}{p^{m}} \bigg) =    \chi(f(a)) e\bigg( \frac{ g(a)}{p^{m}} \bigg)\cdot  
  e\bigg(   \frac{      b H(a) }  {p^{m-\ell-\tau}}  \bigg).
\]
The value of the last term depends only on $b \pmod {p^{m-\ell-\tau}}$, and so, summing over $n=a+bp^\ell$, the result follows.
\end{proof}

\begin{proof} [Proof of Theorem \ref{thm:chi-expand2}]  The result is trivial for $ \tau\geq m-1$ so we will assume that
$\tau\leq m-2$. 

The last sum in the displayed equation in Corollary \ref{cor:chi-expand}
  is $0$ unless $H(a)\equiv 0 \pmod  {p^{m-\ell-\tau}}$ in which case it equals $p^{m-\ell-\tau}$, and the value of $H(a)  \pmod  {p^{m-\ell-\tau}}$ depends only on the value of $a \pmod  {p^{m-\ell-\tau}}$. Taking $\ell=   \lceil \frac{m-\tau }2\rceil $ in Corollary \ref{cor:chi-expand}, and using the trivial bound $| \chi(f(a)) e( \frac{ g(a)}{p^{m}} )|\leq 1$ we deduce that 
\begin{align*}
\bigg| \sum_{n \pmod {p^m}}  \chi(f(n)) e\bigg( \frac{ g(n)}{p^{m}} \bigg) \bigg| &\leq
p^{\ell+\tau} \cdot \#\{ a \pmod  {p^{m-\ell-\tau}}: H(a)\equiv 0 \pmod  {p^{m-\ell-\tau}}\} \\
& \leq  D p^{m -\lceil \frac {m-\ell-\tau}{D}\rceil } = D p^{m -\lceil \frac { m-\tau-1}{2D}\rceil },
\end{align*}
by Proposition \ref{prop: Soln count},  where $D=\deg_p H_+$. The equality in the last step is found by a case-by-case analysis.
\end{proof}

  \begin{proof} [Proof of Corollary \ref{cor:chi-expand2}]  If $\chi$ is primitive then the result is Theorem \ref{thm:chi-expand2}.
  
   If $\chi$ is not primitive, then $\chi=\psi^{p^{m-\ell}}$ for some primitive character $\psi \pmod {p^m}$ (with $\ell<m$ minimal). Therefore
 $\chi(f(n)) =\psi^{p^{m-\ell}}(f(n)) =\psi(f(n)^{p^{m-\ell}}) $. We then apply Theorem \ref{thm:chi-expand2} with $\chi$ replaced by $\psi$ and $f(x)$ replaced by $f(x)^{p^{m-\ell}}$. Since $C_\chi:\equiv C_\psi  p^{m-\ell} \pmod {p^{m-1}}$ we have
\[ h_{\psi,f^{p^{m-\ell} },g}(x)
:= g'(x)+C_{\psi}\cdot {p^{m-\ell}} f'(x)/f(x) \equiv g'(x)+C_{\chi}  f'(x)/f(x) \pmod {p^{m-1}},
 \]
 that is, $h_{\psi,f^{p^{m-\ell} },g}(x) \equiv h_{\chi,f,g}(x) \pmod {p^{m-1}}$.  
Since $\tau \le m-2$ we deduce that $H_{\chi,f,g} \equiv H_{\psi,f^{p^{m-\ell}},g}     \pmod p$ and the result now follows from Theorem \ref{thm:chi-expand2}.
 \end{proof}

\begin{proof}[Proof of Proposition \ref{prop:chi-expand}] Let $\chi=\chi_{p^m}$ be a primitive character mod $p^m$.
If $p^\ell> 2$ then the series for $e^{p^\ell}$ converges $p$-adically and so we can truncate it, in fact $1+\sum_{i=1}^{I} p^{i\ell}/i! \pmod {p^m}$ is fixed when  $I\geq I_{p,\ell,m}$ where $I_{p,\ell,m}$ is the smallest integer for which 
$p^{i\ell}/i! \equiv  0\pmod {p^m}$ for all $i>I$. 
 Now as noted above, for $p \nmid k$ and $1 \le \ell \le m-1$, we have $\chi(1+kp^\ell)$ is a primitive $p^{m-\ell}$th root of unity. Thus $\chi(1+\sum_{i=1}^{I} p^{i\ell}/i!)$ is a primitive $p^{m-\ell}$th root of unity so we can define
\[
\chi_{p^m}(e^{p^\ell}):=\chi_{p^m}\bigg(1+\sum_{i=1}^{I} \frac{p^{i\ell}}{i!}  \bigg)=:e\bigg( \frac{ C_{\chi,\ell}}{p^{m-\ell }}  \bigg)
\]
for some integer $C_{\chi,\ell}$ which is not divisible by $p$.
This set up then gives that
\[
e\bigg( \frac{ C_{\chi,1}}{p^{m-\ell }}  \bigg) =e\bigg( \frac{ C_{\chi,1}}{p^{m-1 }}  \bigg)^{p^{\ell-1}} = \chi_{p^m}(e^{p})^{p^{\ell-1}} =
\chi_{p^m}(e^{p^\ell})=e\bigg( \frac{ C_{\chi,\ell}}{p^{m-\ell }}  \bigg)
\]
and so $C_{\chi,\ell} \equiv C_{\chi,1} \pmod {p^{m-\ell}}$ (if $p\geq 3$), and analogously $C_{\chi,\ell}\equiv C_{\chi,2} \pmod {2^{m-\ell}}$ if $p=2$; we denote this common value by $C_\chi$, which  can be taken to be a unique integer between $1$ and $p^{m-1}$.

Now
\[
1+rp^\ell = e^{-\sum_{j\geq 1} \frac{(-rp^\ell)^j}{j} }\equiv e^{-\mathcal L(1+rp^\ell)} \pmod {p^m},
\]
and so
\[
\chi(1+rp^\ell)=\chi(e^{p^\ell})^{-p^{-\ell}\mathcal L(1+rp^\ell)}=e\bigg( \frac{ -C_{\chi}\mathcal L(1+rp^\ell)}{p^{m }}  \bigg).
\]

Now we claim that we have the Taylor expansion
\[
\log \bigg( \frac{f(a+x)}{f(a)} \bigg) = \sum_{j\geq 1}  \frac{x^j}{j!} \bigg( \frac{f'(a)}{f(a)} \bigg)^{(j-1)}.
\]
To see this write $F(x)=\log  ( \frac{f(a+x)}{f(a)}  )= \sum_{j\geq 0}  F^{(j)}(0) \frac{x^j}{j!} $, so
$F^{(0)}(0)=F(0)=\log  ( \frac{f(a)}{f(a)}  )=0$. Then $F'(x)=\frac{f'(a+x)}{f(a+x)}$ so this confirms the $j=1$ coefficient taking $x=0$.
Now $\frac{d}{dx} \frac{f'(a+x)}{f(a+x)} =\frac{d}{d(a+x)}\frac{f'(a+x)}{f(a+x)} $ and so 
\[
F^{(j)}(x)\bigg|_{x=0}  = \bigg( \frac{f'(a+x)}{f(a+x)} \bigg)^{(j-1)} \bigg|_{x=0} = \bigg( \frac{f'(a)}{f(a)} \bigg)^{(j-1)},
\]
which is the claim. Now if $n=a+bp^\ell$ then we let $1+rp^\ell = f(n)/f(a) =  \frac{f(a+x)}{f(a)} \big|_{x=bp^\ell}$, and so
\[
 \chi(f(n))= \chi(f(a)) \cdot \chi ( 1+rp^\ell ) = \chi(f(a)) \cdot  e\bigg(   \frac{ C_{\chi} \sum_{j\geq 1}  \frac{(bp^\ell)^j}{j!} \ ( \frac{f'(a)}{f(a)})^{(j-1)} }  {p^m}  \bigg).
\]
We also have
\[
e\bigg(   \frac{ g(n) }  {p^m}\bigg) = e\bigg(   \frac{ g(a) }  {p^m}\bigg)
e\bigg(   \frac{  \sum_{j\geq 1}  \frac{(bp^\ell)^j}{j!} \ g^{(j)}(a) }  {p^m}  \bigg)
\]
and multiplying these together gives the result.
\end{proof}


\section{Interpreting Corollary \ref{cor:chi-expand2} away from finitely many primes}

\begin{lemma} \label{lem: h=c  mod p} Suppose that $f(x),g(x)\in \mathbb Q(x), C\in \mathbb Z$, and 
 prime $p$ is $>\max\{ \deg f, \deg g\}$.
If  $h(x)=g'(x)+Cf'(x)/f(x)$ is a polynomial mod $p$ then 
$g(x)$ mod $p$ is a polynomial, and either $p|C$ or $f(x)$ mod $p$ is a constant.
\end{lemma}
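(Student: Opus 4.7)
The plan is to analyze the reduction $\bar h := h \bmod p$ locally at each point of the algebraic closure $\overline{\mathbb F}_p$. I would factor
\[
\bar f(x) = \bar c \prod_{\alpha} (x-\alpha)^{n_\alpha}, \qquad \bar g(x) = \bar d \prod_{\alpha} (x-\alpha)^{m_\alpha},
\]
with finitely many nonzero integer multiplicities. The hypothesis $p>\deg f\ge |n_\alpha|$ and $p>\deg g\ge |m_\alpha|$ guarantees that every nonzero exponent is a unit modulo $p$. The logarithmic derivative $\bar f'/\bar f=\sum_\alpha n_\alpha/(x-\alpha)$ has at worst simple poles, whereas writing $\bar g=(x-\alpha)^{-k}u(x)$ with $u(\alpha)\neq 0$ and $k=-m_\alpha\ge 1$ gives $\bar g'=-k(x-\alpha)^{-k-1}u(x)+(x-\alpha)^{-k}u'(x)$; since $p\nmid k$, the pole of $\bar g'$ at $\alpha$ has order exactly $k+1\ge 2$.

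First, I would show that $\bar g$ has no finite poles. A pole of order $\ge 2$ in $\bar g'$ cannot be canceled by the simple pole of $C\bar f'/\bar f$ at $\alpha$, so any pole of $\bar g$ would force $\bar h=\bar g'+C\bar f'/\bar f$ to have a pole too, contradicting the hypothesis that $\bar h$ is a polynomial. Hence every $m_\alpha\ge 0$, which means $\bar g\in \mathbb F_p[x]$, giving the first half of the conclusion.

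Now $\bar g'$ is a polynomial, so the same must be true of $C\bar f'/\bar f=\sum_\alpha Cn_\alpha/(x-\alpha)$. Since simple poles at distinct $\alpha$'s can sum to a polynomial only when each residue vanishes, $Cn_\alpha\equiv 0\pmod p$ for every $\alpha$ in the support. If $p\nmid C$, then $n_\alpha\equiv 0\pmod p$ for each $\alpha$, and the bound $|n_\alpha|\le \deg f<p$ forces $n_\alpha=0$ identically, i.e.\ $\bar f$ is a constant. This yields the required dichotomy.

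The substantive step is the pole-order comparison in the first paragraph; it works precisely because the bound $p>\deg g$ prevents $p$ from dividing the pole multiplicity $k$ and killing the leading term of $\bar g'$, and similarly $p>\deg f$ keeps the residues $n_\alpha$ nonzero mod $p$. Everything else is bookkeeping. The only minor wrinkle is to confirm that the linear-factor decomposition over $\overline{\mathbb F}_p$ is compatible with the reduction of $f=f_+/f_-$ and $g=g_+/g_-$, but this is immediate since $f_\pm,g_\pm\in\mathbb Z[x]$ and the multiplicity bounds survive the reduction.
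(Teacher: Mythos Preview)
Your proof is correct and follows essentially the same approach as the paper's: both hinge on the observation that the poles of $\bar g'$ have order $\ge 2$ while those of $\bar f'/\bar f$ are simple, so no cancellation can occur. The only cosmetic differences are that the paper works with irreducible factors over $\mathbb F_p$ rather than linear factors over $\overline{\mathbb F}_p$, and in the endgame the paper routes through ``$f'\equiv 0\pmod p$ forces $f$ to be a $p$th power, hence constant by the degree bound'' whereas you go directly from $n_\alpha\equiv 0\pmod p$ and $|n_\alpha|<p$ to $n_\alpha=0$.
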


\begin{proof} If $f(t) \equiv c \prod_i f_i(t)^{e_i}\pmod p$ where the $f_i(t)$ are irreducible then 
$f'(t)/f(t) \equiv \sum_i e_i f_i'(t)/f_i(t) \pmod p$. Now the poles of $g'(t) \pmod p$ cannot be simple, whereas the poles of $f'/f$ are all simple, and so there can be no cancelation of poles between $g'(t)$ and $Cf'(t)/f(t)$. Therefore if $h(t)$ is polynomial then $g'(t) \pmod p$ is a polynomial and either $C\equiv 0 \pmod p$ or each $e_i\equiv 0 \pmod p$ implying that
$f'(t) \equiv 0\pmod p$. 

  Now if  $P(t)$ is an irreducible polynomial for which $P(t)^e\| g_-(t)$ then $P(t)^{e+1}$ is the exact power of $P(t)$ dividing the denominator of $g'(t)$ unless $p$ divides $e$.  But since $g'(t)$  is a polynomial mod $p$, we must have $p|e$. But then either
$p\leq \deg_pg\leq \deg g$, which contradicts the hypothesis or $e=0$. Therefore $g_-(t)$ mod $p$ is a constant; that is $g(t)$ mod $p$ is a polynomial.
   
If $f'(t) \equiv 0\pmod p$ then, by the argument of the last paragraph, we know that $f(t)$ mod $p$ is a polynomial,
and so it must be a $p$th power mod $p$.  Then either  
$p\leq \ \deg f$, a contradiction or $f(t) $ mod $p$ is a constant.
\end{proof}

\begin{lemma} \label{lem: reduction  mod p} If $f(x)\in \mathbb Q(x)\setminus \mathbb Q[x]$ but
$f(x)$ mod $p$ is a polynomial then $p$ divides $\Delta(f)$. Moreover if $f(x)$   is a polynomial of degree $d$
and  $p$ does not divide $\Delta(f)$ then
$f(x)$ mod $p$ is a polynomial of degree $d$,
\end{lemma}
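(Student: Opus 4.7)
The proof amounts to unpacking the definition of $\Delta(f)$. Write $f(x)=(c_+/c_-)\prod_i f_i(x)^{e_i}$ as prescribed in the setup, so $f_+=c_+\prod_{e_i>0}f_i^{e_i}$, $f_-=c_-\prod_{e_i<0}f_i^{|e_i|}$, and the leading coefficient of $f_+f_-$ is $c_f=c_+c_-\prod_i\mathrm{lc}(f_i)^{|e_i|}$. By definition $\Delta(f)$ contains $c_f$ and $\mathrm{disc}\bigl(c_f\prod_i f_i\bigr)$ as factors, so the single hypothesis $p\nmid\Delta(f)$ will deliver everything.

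First I would observe that $p\nmid c_f$ gives $p\nmid c_+$, $p\nmid c_-$, and $p\nmid\mathrm{lc}(f_i)$ for every $i$; in particular each $f_i$ reduces mod $p$ to a polynomial of its original (positive) degree. Next, from the standard factorization
\[
\mathrm{disc}\Bigl(\textstyle\prod_i f_i\Bigr)=\prod_i \mathrm{disc}(f_i)\cdot\prod_{i<j}\mathrm{Res}(f_i,f_j)^2
\]
(up to unit factors controlled by $c_f$), the non-vanishing of the left side mod $p$ forces each $f_i\bmod p$ to be separable and any two distinct $f_i,f_j$ to share no root mod $p$. Consequently $\{f_i\bmod p\}$ is a pairwise coprime family in $\F_p[x]$, and therefore $f_+\bmod p$ and $f_-\bmod p$ are coprime nonzero elements of $\F_p[x]$.

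For the first statement, assume $f\in\QQ(x)\setminus\QQ[x]$, so some $e_i<0$ and $f_-$ has positive degree. If $f\bmod p$ were a polynomial then $f_-\bmod p$ would divide $f_+\bmod p$ in $\F_p[x]$; combined with coprimality this would force $f_-\bmod p$ to be a nonzero constant, contradicting that $c_-$ is a unit mod $p$ while at least one $f_i\bmod p$ with $e_i<0$ has positive degree. Hence $p\mid\Delta(f)$. For the second statement, $f\in\QQ[x]$ of degree $d$ forces $f_-=c_-$, and so $d=\deg f_+=\sum_i e_i\deg f_i$; the leading coefficient of $f\bmod p$ is $(c_+/c_-)\prod_i\mathrm{lc}(f_i)^{e_i}\bmod p$, a unit in $\F_p$, giving $\deg(f\bmod p)=d$.

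\textbf{Main obstacle.} There is no substantive obstacle; the argument is careful bookkeeping against the definition. The only point worth checking is that both the non-vanishing of $c_f$ mod $p$ and the separability/coprimality of the reduced $f_i$'s are simultaneously controlled by $p\nmid\Delta(f)$, which is ensured because $\Delta(f)$ was designed to contain $c_f$ and $\mathrm{disc}\bigl(c_f\prod_i f_i\bigr)$ as factors.
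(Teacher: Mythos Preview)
Your proposal is correct and follows essentially the same approach as the paper's proof: both argue that if $p\nmid\Delta(f)$ then neither the leading coefficient of $f_-$ vanishes mod $p$ nor do $f_+$ and $f_-$ acquire a common factor mod $p$, so a genuine rational function cannot reduce to a polynomial. The paper's version is a terse three-sentence case split (denominator becomes constant vs.\ denominator divides numerator), whereas you work the contrapositive and spell out the discriminant--resultant bookkeeping more carefully; the content is the same.
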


\begin{proof} 
When one reduces $f(x)$ mod $p$ to get a polynomial either the denominator reduces to a constant mod $p$ so that $p$ divides the leading coefficient of the denominator, or the  denominator must divide the numerator and so they must have common factors mod $p$. In both cases we have that  $p$ divides $\Delta(f)$.   Moreover by definition, if $p$ does not divide $\Delta(f)$ then it does not divide the leading coefficient of $f(x)$, and so when we reduce $f$ mod $p$ it has the same degree.
\end{proof}

Combining these two results give the following:

\begin{corollary} \label{cor: reduction  mod p}  Suppose that $f(x),g(x)\in \mathbb Q(x), C\in \mathbb Z$, and 
 prime $p$ is $>\max\{ \deg f, \deg g\}$ where $p\nmid \Delta(f)\Delta(g)$.
 If  $h(x)=g'(x)+Cf'(x)/f(x)$ is a polynomial mod $p$ of degree $d$ then $p>d+1$,
$g(x)$ is a polynomial of degree $d+1$, and either $p|C$ or $f(x)$   is a constant.
In particular, if $h(x)\equiv 0 \pmod p$ then $g(x)$ is a constant and either $p|C$ or $f(x)$   is a constant.
 \end{corollary}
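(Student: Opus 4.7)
The plan is to combine Lemmas \ref{lem: h=c  mod p} and \ref{lem: reduction  mod p} and then pin down the degree of $g$ by a direct computation with leading coefficients.

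First I would invoke Lemma \ref{lem: h=c  mod p}: the hypothesis that $h(x)$ reduces to a polynomial mod $p$ together with $p>\max\{\deg f,\deg g\}$ forces $g(x)\bmod p$ to be a polynomial, and forces either $p\mid C$ or $f(x)\bmod p$ to be a constant. Next I would promote these reduction statements to statements about $f$ and $g$ themselves. Since $p\nmid \Delta(g)$, the contrapositive of the first half of Lemma \ref{lem: reduction  mod p} gives $g\in\mathbb Q[x]$, and the second half ensures that its degree is preserved on reduction. In the alternative where $f\bmod p$ is a constant, applying Lemma \ref{lem: reduction  mod p} to $f$ (using $p\nmid\Delta(f)$) shows that $f\in\mathbb Q[x]$ has degree equal to the degree of its reduction, so $f$ itself is a constant.

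The key computation is then to read $d$ off from $\deg g$. In either alternative we have $Cf'(x)/f(x)\equiv 0\pmod p$, so $h(x)\equiv g'(x)\pmod p$. Writing $g(x)=a_Dx^D+\cdots$ with $D=\deg g$, we have $g'(x)=Da_Dx^{D-1}+\cdots$; since $p\nmid\Delta(g)$ gives $p\nmid a_D$ and $p>D\ge 1$ gives $p\nmid D$, the reduction $g'\bmod p$ has exact degree $D-1$. Matching this against the degree $d$ of $h\bmod p$ yields $D=d+1$, so $g$ is a polynomial of degree $d+1$ and $p>\deg g=d+1$.

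For the last clause, if $h(x)\equiv 0\pmod p$ the same analysis forces the leading coefficient $Da_D$ of $g'$ to vanish mod $p$; since $p\nmid a_D$ and $p>D$, this is only possible when $D=0$, i.e.\ $g$ is a constant, and the earlier dichotomy $p\mid C$ or $f$ constant is inherited. The main obstacle is not a deep step but the bookkeeping needed to ensure that $p\nmid\Delta(g)$ genuinely controls both the fact that $g\in\mathbb Q[x]$ and the non-vanishing of its leading coefficient mod $p$, so that the identification $\deg(g'\bmod p)=\deg g-1$ is rigorous; one must also remember to handle the trivial case $D=0$ (where $g'\equiv 0$ already) separately when asserting $p>d+1$.
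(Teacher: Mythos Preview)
Your proposal is correct and follows essentially the same approach as the paper: the paper simply says the corollary follows by ``combining these two results'' (Lemmas \ref{lem: h=c  mod p} and \ref{lem: reduction  mod p}) and then remarks that $p\nmid\Delta(f)$ gives $\deg f=\deg_p f$ so that $f$ is constant mod $p$ iff $f$ is constant, and similarly for $g$. Your write-up supplies exactly these steps and adds the explicit leading-coefficient computation showing $\deg(g'\bmod p)=\deg g-1$, which the paper leaves implicit.
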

 
 In particular we used here that since $p\nmid \Delta(f)$ we have $\deg f =\deg_p f$ so that $f$ is a constant mod $p$ if and only if it is a constant (and similarly since $p\nmid \Delta(g)$).
 
 \begin{corollary} \label{cor: reduction2  mod p}  Suppose that $f(x),g(x)\in \mathbb Q(x), C\in \mathbb Z$, and 
 prime $p$ is $>\max\{ \deg f, \deg g\}$ where $p\nmid \Delta(f)\Delta(g)$. Let 
 $h(x)=g'(x)+Cf'(x)/f(x)$ and let $\tau=v_p(h(x))$.
 \begin{enumerate}[(a)]
\item  If $f(x)$ and $g(x)$ are constants then $h(x)=0$. Otherwise
\item If $g(x)$ is not constant then $\tau=0$.
\item  If $g(x)$ is a constant and $f$ is not, then   $\tau=v_p(C)$.
\end{enumerate}
  \end{corollary}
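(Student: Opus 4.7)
The plan is to reduce all three cases to the ``in particular'' clause of Corollary \ref{cor: reduction mod p}, which asserts that if $g'(x)+Cf'(x)/f(x)\equiv 0\pmod p$ then $g$ must be constant and either $p\mid C$ or $f$ must be constant. The hypotheses $p>\max\{\deg f,\deg g\}$ and $p\nmid \Delta(f)\Delta(g)$ are identical in both corollaries, so the previous result applies verbatim.

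Part (a) is immediate: if $f$ and $g$ are both constants then $f'=g'=0$, so $h(x)=0$ identically, and there is nothing to show.

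For part (b) I would argue by contradiction. Suppose $g$ is non-constant but $\tau\geq 1$. By the convention used in Theorem \ref{thm:chi-expand2}, $\tau\geq 1$ means precisely that $h(x)\equiv 0\pmod p$ as a rational function. The ``in particular'' clause of Corollary \ref{cor: reduction mod p} then forces $g$ to be constant, contradicting the standing hypothesis of (b). Hence $\tau=0$.

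For part (c), since $g$ is constant we have $g'=0$ and $h(x)=Cf'(x)/f(x)$. I would write $C=p^{v_p(C)}C_0$ with $p\nmid C_0$, so that $h=p^{v_p(C)}\cdot C_0 f'/f$, and it suffices to check that $C_0 f'/f\not\equiv 0\pmod p$. If it vanished mod $p$, then applying Corollary \ref{cor: reduction mod p} with the same $f$, a constant $g$, and the coefficient $C_0$ (which is not divisible by $p$) would force $f$ to be constant, contradicting the hypothesis of (c). Therefore $v_p(f'/f)=0$ and $\tau=v_p(C)$.

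There is no substantial obstacle here; each case is essentially a one-line invocation of Corollary \ref{cor: reduction mod p} after a trivial rewriting. The only bookkeeping subtlety is to confirm that $v_p$ on a rational function means what the previous section uses, namely the largest $\tau$ for which $p^{-\tau}h$ has $p$-integral coefficients and a nonzero reduction mod $p$; this is exactly the $\tau$ appearing in the decomposition $h=p^\tau H$ of Theorem \ref{thm:chi-expand2}, so the translation between ``$\tau\geq 1$'' and ``$h\equiv 0\pmod p$'' is automatic.
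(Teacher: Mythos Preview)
Your proof is correct and matches the paper's intended argument. The paper does not spell out a formal proof of this corollary, leaving it as an immediate consequence of Corollary~\ref{cor: reduction mod p}; your write-up is exactly that deduction made explicit. The only slight difference is that the paper, in the remark immediately following the statement, verifies the key point for part~(c) (namely $f'/f\not\equiv 0\pmod p$) directly---observing that $f'\equiv 0\pmod p$ would force $f$ mod $p$ to be a $p$th power of a non-constant rational function and hence $\deg f\geq p$---rather than rerouting through Corollary~\ref{cor: reduction mod p} with $C$ replaced by $C_0$. Both arguments are equivalent and equally short.
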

  
 The hypothesis of Corollary \ref{cor: reduction2  mod p} implies that if $f'(x)\equiv 0 \pmod p$ then $f(x)$ mod $p$ is a constant since, if not, $f(x)$ mod $p$ is the $p$th power of a non-trivial rational function and so $\deg f\geq \deg_pf\geq p$ contradicting the hypothesis.
  
  With further effort one can show \cite[Corollary 6.2]{CG} that for any $f(x),g(x)\in \mathbb Q(x)$, not both constants and $\tau$ as defined in Corollary    \ref{cor: reduction2  mod p}, $p^\tau$ divides  $\deg_pg(x)$ and $C\deg_pf(x)$, which implies Corollary \ref{cor: reduction2  mod p}.

  We now use Corollary \ref{cor:chi-expand2} to deduce the following
  
 \begin{corollary} \label{cor: reduction3  mod p}   Suppose that $f(x),g(x)\in \mathbb Q(x)$ where  $f(x)$ and $g(x)$ are not both constants, and 
 prime $p$ is $>\max\{ 16, \deg f, \deg g\}$ where $p\nmid \Delta(f)\Delta(g)$. Let $D=  2(\deg f + \deg g)$.
 If $g$ is not constant or $\chi$ is a primitive character mod $p^m$ then 
 \[
\bigg| \frac 1{p^m} \sum_{n \pmod {p^m}}  \chi(f(n)) e\bigg( \frac{ g(n)}{p^{m}} \bigg) \bigg| \leq  D p^{ -\frac {m}{4D}  }.
\]
If $g$ is constant and $\chi$ is induced from a primitive character mod $p^\ell$ with $\ell\geq 1$ then  
\[
\bigg| \frac 1{p^m} \sum_{n \pmod {p^m}}  \chi(f(n)) e\bigg( \frac{ g(n)}{p^{m}} \bigg) \bigg| \leq D p^{ -\frac {\ell}{4D}  }
\]
except if $\ell=1$ and $r_\chi$, the order of $\chi$, divides $r_f$ (as defined in \eqref{eq: Condition}). 
 \end{corollary}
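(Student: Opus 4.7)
The plan is to split on the value of $m$: for $m=1$ I would apply the Weil-type bound of Proposition \ref{prop: KeyEstimate3a} directly, while for $m\geq 2$ I would invoke the lifting estimate of Corollary \ref{cor:chi-expand2}, where the exponent in the bound is governed by $\tau=v_p(h)$ with $h(x)=g'(x)+C_\chi f'(x)/f(x)$. This $\tau$ will be read off case-by-case from Corollary \ref{cor: reduction2  mod p}, combined with the explicit structure $C_\chi\equiv p^{m-\ell}C_{\chi'}\pmod{p^{m-1}}$ for characters induced from primitive ones of conductor $p^\ell$.

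When $m=1$, Proposition \ref{prop: KeyEstimate3a} gives a bound of $\leq(2\deg_p f+2\deg_p g-1)p^{1/2}\leq Dp^{1/2}$, unless $g$ is constant mod $p$ and $\chi$ is either principal or satisfies $f\equiv c_1 F^{r_\chi}\pmod p$. Under $p\nmid \Delta(g)$, Lemma \ref{lem: reduction  mod p} promotes the first condition to ``$g$ is an actual constant in $\mathbb{Q}$'', and under $p\nmid\Delta(f)$, Lemma \ref{lem: f(t)=cF(t)^r} promotes the factorization condition to $r_\chi\mid r_f$; these are exactly the excluded cases. Dividing by $p$ yields $\leq Dp^{-1/2}\leq Dp^{-1/(4D)}$ since $D\geq 1$.

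When $m\geq 2$, Corollary \ref{cor:chi-expand2} bounds the normalized sum by $D\,p^{-\lceil(m-\tau-1)/(2D)\rceil}$, so it suffices to compute $\tau$. If $g$ is non-constant, then $v_p(g')=0$ and Corollary \ref{cor: reduction2  mod p}(b) gives $\tau=0$. If $g$ is constant and $\chi$ is primitive mod $p^m$, then $C_\chi$ is a unit mod $p$ and Corollary \ref{cor: reduction2  mod p}(c) again gives $\tau=0$. If $g$ is constant and $\chi$ is induced from a primitive character of conductor $p^\ell$ with $2\leq\ell\leq m-1$, then $v_p(C_\chi)=m-\ell$ and Corollary \ref{cor: reduction2  mod p}(c) yields $\tau=m-\ell$. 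In each of these sub-cases the exponent $\lceil(m'-1)/(2D)\rceil$ (with $m'\in\{m,\ell\}$) is at least $m'/(4D)$ by the elementary inequality $2(m'-1)\geq m'$ valid for $m'\geq 2$.

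The one sub-case that resists Corollary \ref{cor:chi-expand2} --- and where I expect the main obstacle --- is $g$ constant, $\ell=1$, $m\geq 2$: then $\tau=m-1$ collapses the exponent to $\lceil 0/(2D)\rceil=0$ and only the trivial bound survives. I would instead exploit directly that $\chi(f(n))=\eta(f(n)\bmod p)$ depends only on $n\pmod p$, where $\eta$ is the primitive mod-$p$ character inducing $\chi$, and that $e(g(n)/p^m)=e(c/p^m)$ is a constant of modulus one. Partitioning mod $p$ yields
\[
\sum_{n\pmod{p^m}}\chi(f(n))e\bigg(\frac{g(n)}{p^m}\bigg)=e(c/p^m)\,p^{m-1}\sum_{a\pmod p}\eta(f(a)),
\]
and Proposition \ref{prop: KeyEstimate3a} bounds the inner sum by $\leq Dp^{1/2}$, with exception exactly $r_\chi\mid r_f$ just as in the $m=1$ case. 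Dividing by $p^m$ gives $\leq Dp^{-1/2}\leq Dp^{-1/(4D)}$, completing the argument.
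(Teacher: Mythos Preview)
Your proof is correct and follows essentially the same approach as the paper: use Proposition~\ref{prop: KeyEstimate3a} for $m=1$ (with the exceptions identified via Lemmas~\ref{lem: f(t)=cF(t)^r} and~\ref{lem: reduction  mod p}), use Corollary~\ref{cor:chi-expand2} for $m\geq 2$ after reading off $\tau$ from Corollary~\ref{cor: reduction2  mod p}, and handle the degenerate sub-case $g$ constant with $\ell=1$ by direct reduction to a sum mod~$p$. The only organizational difference is that for $g$ constant with $\ell\geq 2$ the paper first collapses the sum mod~$p^m$ to a sum mod~$p^\ell$ for the primitive character $\psi$ and then applies Corollary~\ref{cor:chi-expand2} with $\tau=0$, whereas you apply Corollary~\ref{cor:chi-expand2} directly to $\chi$ mod~$p^m$ with $\tau=m-\ell$; both routes yield the identical bound $D\,p^{-\lceil(\ell-1)/(2D)\rceil}$.
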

 
 In the cases not covered by this result,
$f$ is not constant,  $g$ is constant and either   $\chi$ is principal or $\ell=1$ and $r_\chi$ divides $r_f$. In these cases
   $  \chi(f(n)) e ( \frac{ g(n)}{p^{m}})=1_{p\nmid f_-(n)f_+(n)g(n)}$ and so the mean value   
 $=p^{-1}\# \{ n \pmod p: f_-(n)f_+(n)g(n)\not\equiv 0 \pmod p\} = 1+O(D/p)$.
 
 \begin{proof}  
 In the first case $\tau=0$ by Corollary \ref{cor: reduction2  mod p}(b), and then 
  \[
\bigg| \frac 1{p^m} \sum_{n \pmod {p^m}}  \chi(f(n)) e\bigg( \frac{ g(n)}{p^{m}} \bigg) \bigg| \leq 
\begin{cases}
D p^{-1/2} & \text{ if } m=1;\\
D p^{ -\big\lceil \frac {m-1}{2D}\big\rceil }& \text{ if } m\geq 2.
\end{cases}  \leq D p^{ -\frac {m}{4D}  } .
\]
by Proposition \ref{prop: KeyEstimate3a}  and Corollary \ref{cor:chi-expand2} respectively.
 
  In the second case $\tau=m-\ell$ by Corollary \ref{cor: reduction2  mod p}(c) but we attack this directly.  Suppose that $\chi$ is induced from   primitive character $\psi$ mod $p^\ell$ with $\ell \ge 1$, so 
 \[
  \frac 1{p^m} \sum_{n \pmod {p^m}}  \chi(f(n)) e\bigg( \frac{ g(n)}{p^{m}} \bigg) =
 e\bigg( \frac{ g(0)}{p^{m}} \bigg) \cdot   \frac 1{p^\ell} \sum_{n \pmod {p^\ell}}  \psi(f(n)) .
 \]
 If $\ell=1$ then we can apply Proposition \ref{prop: KeyEstimate3a} unless
  $f(x)\equiv cF(x)^r \pmod p$.  Since $p\nmid \Delta(f)$ we deduce that this only happens when $f(x)=cF(x)^r$ where $r_\psi=r_\chi$.
  For $\ell\geq 2$ our bound then follows, as above, from Corollary \ref{cor:chi-expand2}. 
   \end{proof}
     

\section{The application in practice} 

Given a prime $p$ and integers $q_1,\dots, q_\ell$ not divisible by $p$ as well as integers $h_{i,j}$, we define $f^*$ and $g^+$   by 
 \[
f^*_{\big[\substack{h_{1,0}, \dots, h_{\ell,0}  \\ h_{1,1} ,\dots, h_{\ell,1}}\big]}(t)
= \prod_{j_1,\dots ,j_\ell\in \{ 0,1\} } f(t+h_{1,j_1}q_1+\cdots +h_{\ell,j_\ell}q_\ell)^{\sigma(j)},
\]
and
\[
g^+_{\big[\substack{h_{1,0}, \dots, h_{\ell,0}  \\ h_{1,1} ,\dots, h_{\ell,1}}\big]}(t)
= \sum_{j_1,\dots ,j_\ell\in \{ 0,1\} } \sigma(j) \cdot g(t+h_{1,j_1}q_1+\cdots +h_{\ell,j_\ell}q_\ell) ,
\]
with $\sigma(j)=(-1)^{j_1+\cdots +j_\ell}$.   We wish to apply Theorem \ref{thm:chi-expand2}  and Corollary \ref{cor:chi-expand2}   with $f=f^*$ and $g=g^+$ (and with $g=g^++b$ for an arbitrary constant $b$).
By definition, the leading coefficient of $ f^*_{\big[\substack{h_{1,0} \dots, h_{k,0}  \\ h_{1,1}, \dots, h_{k,1}}\big]}(t)$ will be 1 for all $k\geq 1$.

\subsection{The $p$-divisibility of the rational function differences}

This is the key new result in this article.

\begin{prop} \label{prop: vp} Let $H(t)\in \mathbb Q(t)$ with $H(t)\not\equiv 0 \pmod p$. Let $p$ be a prime $\geq  k+ \deg H$ which does not divide $q_1\cdots q_k$,  such that   if $H(t)$ reduces to a polynomial mod $p$ then $\deg_p H\geq k$. Then 
\[
v_p\bigg( H^+_{\big[\substack{h_{1,0} \dots, h_{k,0}  \\ h_{1,1}, \dots, h_{k,1}}\big]}(t)\bigg)  = 
\sum_{i=1}^k v_p( h_{i,1}-h_{i,0})=:v_p(\mathbf{h}).
\]
Moreover for any integer $b$ we have 
\[
v_p\bigg( H^+_{\big[\substack{h_{1,0} \dots, h_{k,0}  \\ h_{1,1}, \dots, h_{k,1}}\big]}(t) +b\bigg)  =  \min \{ v_p(\mathbf{h}), v_p(b) \}  
\]
assuming that   if $H(t)$ reduces to a polynomial mod $p$ then $\deg_p H> k$.
\end{prop}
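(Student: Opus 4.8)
The plan is to reduce everything to the action of an iterated forward‑difference operator on the reduction of $H$ modulo $p$, handling separately the indices $i$ with $v_p(h_{i,1}-h_{i,0})\ge 1$ and those with $v_p(h_{i,1}-h_{i,0})=0$. Write $H^+$ for the rational function $H^+_{\big[\substack{h_{1,0} \dots, h_{k,0}  \\ h_{1,1}, \dots, h_{k,1}}\big]}(t)$ of the statement, $v_i:=v_p(h_{i,1}-h_{i,0})$ and $v:=\sum_i v_i=v_p(\mathbf h)$. The first step is the identity
\[
H^+=\pm\,\Delta_{c_1}\cdots\Delta_{c_k}\widetilde H,\qquad \widetilde H(t):=H\Big(t+\textstyle\sum_i h_{i,0}q_i\Big),\quad c_i:=(h_{i,1}-h_{i,0})q_i ,
\]
where $\Delta_cF(t):=F(t+c)-F(t)$; since $p\nmid q_i$ we have $v_p(c_i)=v_i$. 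Replacing $H$ by $\widetilde H$ changes neither $\deg_p$ nor whether the reduction is a polynomial, and (using that $H$, and hence all the shifted copies occurring below, have good reduction at $p$ — automatic for the essentially monic $H$ of the applications) I may compute valuations at the level of reductions. Put $S=\{i:v_i\ge1\}$, $T=\{i:v_i=0\}$, $s=|S|$, $t=|T|$, and $G:=\prod_{i\in T}\Delta_{c_i}\widetilde H$, so $H^+=\pm\,\Delta_SG$ with $\Delta_S:=\prod_{i\in S}\Delta_{c_i}$.

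\emph{The unit steps $T$ (a mod‑$p$ statement).} Work in the group ring $R=\mathbb F_p[\sigma]/(\sigma^p-1)=\mathbb F_p[\sigma]/(\sigma-1)^p$ acting on $\mathbb F_p(t)$ by $\sigma\colon t\mapsto t+1$. For $i\in T$ the reduction of $\Delta_{c_i}$ is $\sigma^{\overline{c_i}}-1=(\sigma-1)\big(1+\sigma+\cdots+\sigma^{\,\overline{c_i}-1}\big)$, whose second factor is a unit of $R$ (its value at $\sigma=1$ is $\overline{c_i}\ne0$); hence the reduction of $\prod_{i\in T}\Delta_{c_i}$ equals $(\text{unit})\cdot(\sigma-1)^t$, and it suffices to show $(\sigma-1)^t\overline{\widetilde H}\ne0$, with control of its shape. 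If $\overline{\widetilde H}$ is a polynomial, it has degree $\deg_pH<p$ (as $p\ge k+\deg H$), so $(\sigma-1)$ lowers its degree by exactly $1$ and $(\sigma-1)^t\overline{\widetilde H}$ is a polynomial of degree $\deg_pH-t\ge s$. If $\overline{\widetilde H}$ has a genuine pole, decompose it into partial fractions over $\overline{\mathbb F}_p$; since $(\sigma-1)$ preserves each pole‑order layer, choose the maximal pole order $r_0$ and an orbit $\mathcal O$ of $t\mapsto t+1$ containing an order‑$r_0$ pole, and let $Q\in R\otimes\overline{\mathbb F}_p$ be the corresponding (nonzero) element. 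If $(\sigma-1)^tQ=0$ then $Q\in\operatorname{Ann}((\sigma-1)^t)=(\sigma-1)^{p-t}\,(R\otimes\overline{\mathbb F}_p)$, so the degree‑$<p$ representative of $Q$ is divisible by $(\sigma-1)^{p-t}$; by the classical lacunary‑polynomial bound — applicable since all exponents are $<p$ — it has at least $p-t+1$ nonzero terms, i.e.\ $\overline{\widetilde H}$ has $\ge p-t+1$ poles of order $r_0$ in $\mathcal O$, forcing $\deg_pH\ge p-t+1>p-k\ge\deg H$, absurd. Hence $(\sigma-1)^tQ\ne0$, $v_p(G)=0$, and $\overline G$ is a polynomial of degree $\deg_pH-t\ge s$ or a rational function with a pole; in either case $\overline G,\overline G',\dots,\overline G^{(s-1)}$ are non‑constant, and if in addition $\deg_pH>k$ then so is $\overline G^{(s)}$.

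\emph{The divisible steps $S$.} Here I peel off the $\Delta_{c_i}$, $i\in S$, one at a time, using the finite identity (valid because $m!$ divides the coefficients of $R_\pm^{(m)}$)
\[
\Delta_cR=\frac{\sum_{m\ge1}c^m\big(\tfrac{R_+^{(m)}}{m!}R_--R_+\tfrac{R_-^{(m)}}{m!}\big)}{R_-(t+c)\,R_-(t)},\qquad R=R_+/R_-\in\mathbb Q(t).
\]
When $v_p(c)\ge1$ the numerator lies in $c\,\mathbb Z[t]$ and is $\equiv c\,(R_+'R_--R_+R_-')=c\,R_-^2R'\pmod{c^2}$. So if $R$ has good reduction, $v_p(R)=0$ and $\overline R$ is non‑constant (hence $\overline R{}'\ne0$), then $v_p(\Delta_cR)=v_p(c)$ and $\Delta_cR/p^{v_p(c)}$ has good reduction, with reduction $\overline{c/p^{v_p(c)}}\cdot\overline R{}'$. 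Iterating this with $R=G$ and then its successive quotients — legitimate because $\overline G,\dots,\overline G^{(s-1)}$ are non‑constant — yields
\[
v_p(H^+)=\sum_{i\in S}v_i=v=v_p(\mathbf h),\qquad H^+=p^vW\ \text{ with }\ v_p(W)=0,\ \overline W=\Big(\textstyle\prod_{i\in S}\overline{c_i/p^{v_i}}\Big)\overline G^{(s)}\ne0,
\]
which is the first assertion; and under the strict hypothesis $\deg_pH>k$ this $\overline W$ is moreover non‑constant.

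\emph{Adding $b$, and the main obstacle.} With $H^+=p^vW$ as above, fix an integer $b$ and set $\beta:=v_p(b)$. If $\beta\ne v$ the summand of larger valuation is negligible and $v_p(H^++b)=\min(v,\beta)$ at once; if $\beta=v$ then $H^++b=p^v(W+b/p^v)$ and $\overline{W+b/p^v}=\overline W+(\text{constant})$ is non‑constant, hence nonzero, so again $v_p(H^++b)=v=\min(v,\beta)$ — non‑constancy of $\overline W$, and therefore the strict degree requirement, is used precisely here. The step I expect to be the main obstacle is the pole case of the $T$‑argument: a naive induction peeling differences off one at a time fails because the denominator of the running rational function can acquire degree far exceeding $p$, after which an Artin–Schreier degeneracy (a high‑degree function being annihilated by $\sigma-1$) could a priori make a difference vanish. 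Collapsing $\prod_{i\in T}\overline{\Delta_{c_i}}$ to $(\sigma-1)^t$ inside $\mathbb F_p[\sigma]/(\sigma-1)^p$ and bounding below the supports of elements of the ideal $(\sigma-1)^{p-t}R$ is the device that makes the weak hypothesis $p\ge k+\deg H$ — rather than something exponential in $k$ — sufficient.
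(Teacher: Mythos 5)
Your proof is correct and follows essentially the same route as the paper: you collapse the unit-size differences to a unit times $(\sigma-1)^t$ in $\mathbb F_p[\sigma]/(\sigma-1)^p$ and force survival of the top-order pole layer by the same counting contradiction with $p\ge k+\deg H$ (the paper phrases this via a linear recurrence among the translates of an irreducible factor of the denominator, you via the annihilator ideal $(\sigma-1)^{p-t}$ plus a sparsity bound), while the $p$-divisible differences are converted to derivatives mod $p$ just as in the paper's Taylor-expansion step, and the $+b$ statement is read off from non-constancy of the final reduction in both arguments. The one point you leave implicit --- that the successive derivatives $\overline G{}',\dots,\overline G^{(s)}$ keep their pole alive, i.e. $p\nmid r_0(r_0+1)\cdots(r_0+s-1)$ --- is exactly the check the paper performs explicitly and follows at once from $r_0+s-1\le \deg H+k-1<p$.
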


\begin{proof} Write $(h_{i,1}-h_{i,0})q_i=u_ip^{e_i}$ where $p\nmid u_i$, for $1\leq i\leq k$.

We begin with the simplest case, that $H(t)$ reduces to a polynomial mod $p$ of degree $D=\deg_p H \geq k$. If 
$x(t):=H^+_{\big[\substack{h_{1,0} \dots, h_{\ell,0}  \\ h_{1,1}, \dots, h_{\ell,1}}\big]}(t) $ then
\[
 H^+_{\big[\substack{h_{1,0} \dots, h_{\ell+1,0}  \\ h_{1,1}, \dots, h_{\ell+1,1}}\big]}(t) = x^+_{\big[\substack{  h_{\ell+1,0}  \\   h_{\ell+1,1}}\big]}(t), 
\]
and if $x(t)=x_dt^d+\ldots$ then $x^+_{\big[\substack{  h_{j,0}  \\   h_{j,1}}\big]}(t)  = (h_{j,1}-h_{j,0})q_j (dx_d t^{d-1}+\ldots)$. Therefore, by induction on $j$ we have that if $H(t)\equiv c\, t^D+\dots \pmod p$ where $p\nmid c$ then, since $D\geq k$,
\[
 H^+_{\big[\substack{h_{1,0} \dots, h_{k,0}  \\ h_{1,1}, \dots, h_{k,1}}\big]}(t) \equiv  \prod_{j=1}^k (h_{j,1}-h_{j,0})  q_j 
  \cdot \bigg(\frac{D!}{(D-k)!} c\, t^{D-k}+\dots   \bigg) \pmod {p^{1+v_p(\mathbf{h})}}.
\] 
Since $p\nmid c\, q_1\cdots q_k$ and $p>D$ so that $p\nmid \frac{D!}{(D-k)!} $ we deduce that 
\[
v_p\bigg( H^+_{\big[\substack{h_{1,0} \dots, h_{k,0}  \\ h_{1,1}, \dots, h_{k,1}}\big]}(t)\bigg) =v_p(\mathbf{h}),
\]
 as desired, and   that $v_p\bigg( H^+_{\big[\substack{h_{1,0} \dots, h_{k,0}  \\ h_{1,1}, \dots, h_{k,1}}\big]}(t) +b\bigg)  =  \min \{ v_p(\mathbf{h}), v_p(b) \}$ if $D>k$
 and so the result follows for $H(t)$ that reduce to polynomials mod $p$.

Now we can assume that $H(t)$ is a rational function that does not reduce to a polynomial mod $p$.
We  organize the $h_{i,j}$ so that  
$e_i=0$ for  $i\leq \ell$ and $e_i\geq 1$ for $i> \ell$. We write
$s(t):=H^+_{\big[\substack{h_{1,0} \dots, h_{\ell,0}  \\ h_{1,1}, \dots, h_{\ell,1}}\big]}(t)\pmod p$ for convenience.

We can write 
\[
H(t) \equiv H_0(t)  + \sum_P \frac{Q(t)}{P(t)^e}  \pmod p
\]
where $H_0(t)$ is a polynomial, where   $P(t), Q(t)\in \mathbb F_p[t]$ with $\deg Q< e\deg P$ in each summand, and the $P(t)$ are distinct  irreducible polynomials. There is at least one such fractional summand, $Q(t)/P(t)^e$.

If the map $t\to t+a$ for some $a\not\equiv 0 \pmod p$ rotates the roots of $P(t)$ then so does $t\to t+ka$ for all $k$ and so $P(t)=R(t^p-t)$ for some polynomial $R(t)$, but then $p\leq \deg P\leq \deg H$ contradicting the hypothesis. 
Therefore the sum above contains a non-zero subsum of the form
$r(t)=\sum_{a=0}^{p-1} Q_a(t+a)/P(t+a)^e$.  (Note that $Q_a(t)$ depends on $a$, but the coefficients are not necessarily  polynomial in $a$.) Then $r^+(t)$, the contribution of the rational function $r(t)$
to the expansion of    $s(t)$ is 
\[
\sum_{ j_1,\dots ,j_\ell\in \{ 0,1\}  } (-1)^{j_1+\cdots +j_\ell}  r(t+h_{1,j_1}q_1+\cdots +h_{\ell,j_\ell}q_\ell)
\equiv 
\sum_{m=0}^{p-1} r(T+m)  \sum_{\substack {I \subset \{1,\dots ,\ell \} \\   \sum_{i\in I} u_i \equiv m \pmod p}} (-1)^{|I|} 
\pmod p
\]
where $T=t+h_{1,0}q_1+\cdots +h_{\ell,0}q_\ell$. Therefore
\begin{align}
r^+(t)&\equiv \sum_{m=0}^{p-1}  \sum_{a=0}^{p-1} \frac{Q_a(T+a+m)}{P(T+a+m)^e}  \sum_{\substack {I \subset \{1,\dots ,\ell \} \\   \sum_{i\in I} u_i \equiv m \pmod p}} (-1)^{|I|} 
\pmod p \notag \\
&\equiv \sum_{n=0}^{p-1} \frac{1}{P(T+n)^e}
\sum_{a=0}^{p-1} \sum_{\substack {I \subset \{1,\dots ,\ell \} \\   a+\sum_{i\in I} u_i \equiv n \pmod p}} (-1)^{|I|}  Q_a(T+n)  \pmod p.
 \label{eq: Coeffs0?}
\end{align}

If this is $\equiv 0 \pmod p$ then each of the coefficients is $\equiv 0 \pmod p$, that is, taking $t=T+n$,
\[
\sum_{a=0}^{p-1} \sum_{\substack {I \subset \{1,\dots ,\ell \} \\   a+\sum_{i\in I} u_i \equiv n \pmod p}} (-1)^{|I|}  Q_a(t) \equiv 0 \pmod p
\]
for each $n \pmod p$. This can be re-expressed as 
\begin{align*} 0&\equiv  \sum_{a \pmod p} Q_a(t) x^{a} \cdot \sum_{I \subset \{1,\dots ,\ell \}  } (-1)^{|I|} x^{\sum_{i\in I} u_i}\\
& \equiv  
  \sum_{a \pmod p} Q_a(t) x^{a} \cdot \prod_{i=1}^\ell (1-x^{u_i}) \pmod { (p,x^p-1)}.
\end{align*}
  By assumption the $u_i\not\equiv 0 \pmod p$  so that $(1-x^{u_i},x^p-1)=x-1$.
Now $\frac{1-x^{u_i}}{1-x}$ is a unit $ \pmod { (p,x^p-1)}$ and $(x-1)^p\equiv x^p-1 \pmod p$,
so the above is equivalent to
\[
(1-x)^\ell \sum_{a=0}^{p-1}  Q_a(t) x^{a}  \equiv 0 \pmod { ( p, (x-1)^p )}.
\]
Therefore we have the linear recurrence relation 
\[
\sum_{j=0}^\ell Q_{a+j \mod p}(t) \binom \ell j (-1)^{\ell-j}  \equiv 0 \pmod p \text{ for all } a\geq 0.
\]
Since the characteristic polynomial for the linear recurrence relation is $(x-1)^\ell$, we can deduce that
$Q_a(t)\equiv R(a) \pmod p$ where $R(x)\in \mathbb F_p[t][x]$ is a polynomial of degree $\ell-1$, and so either this polynomial equals $0$ and so all the $Q_a(t)\equiv 0 \pmod p$ (which is impossible by the definition of $r(t)$), or there are at most $\ell-1$ zeros mod $p$, and so $Q_a(t)\not\equiv 0 \pmod p$ for $\geq p+1-\ell$ values of $a$. But the denominator of $r(t)$ has degree
at least the number of non-zero $Q_a(t)$ and so 
$p+1-\ell\leq \deg r_-\leq \deg H_-\leq \deg H$ which implies that 
$p\leq \ell+\deg H-1\leq k+\deg H-1$,   contradicting the hypothesis.

We deduce that $r^+(t)$, as represented in \eqref{eq: Coeffs0?}, has some non-zero terms, which are also 
non-zero terms of $s(t)$. That is,  $s(t)\pmod p$  has a summand of the form 
\[
q(t)/P(t)^f \text{ for some } 1\leq f\leq e\text{ with } \deg q< f \deg P \text{ and } (q(t),P(t))=1,
\]
where $P(t)$ is an irreducible polynomial.
If $\ell=k$ this implies that $p\nmid s(t)+b$ for any $b \pmod p$ which gives the result in this case.

Let $m=k-\ell\geq 1$ so that   $s^{(m)}(t)\pmod p$ has a summand of the form $r(t)/P(t)^{f+m}$ unless $p$ divides $f(f+1)\cdots (f+m-1)$, in which case $p\leq f+m-1\leq e+k-1\leq \deg h+k-1$ which contradicts the hypothesis, and so $p\nmid s^{(m)}(t)$.
 
 Now define 
 \[
 s^+:= s^+_{\big[\substack{h_{\ell+1,0} \dots, h_{k,0}  \\ h_{\ell+1,1}, \dots, h_{k,1}}\big]}(t) = H^+_{\big[\substack{h_{1,0} \dots, h_{k,0}  \\ h_{1,1}, \dots, h_{k,1}}\big]}(t).
 \]
Let $S(t)=s(t+h_{\ell+1,0}q_{\ell+1}+\cdots + h_{k,0}q_k)$  so that 
 \begin{align*}
s^+(t) &= S^+_{\big[\substack{0, \dots, 0 \\ u_{\ell+1}p^{e_{\ell+1}}, \dots, u_kp^{e_k}}\big]}(t)= \sum_{I\subset \{ \ell+1,\dots,k\}} (-1)^{|I|} \cdot S\bigg(t+\sum_{i\in I} u_ip^{e_i}\bigg) \notag \\
& =\sum_{I\subset \{ \ell+1,\dots,k\}} (-1)^{|I|}  \sum_{n\geq 0}  \frac{S^{(n)}(t)}{n!} \bigg(\sum_{i\in I} u_ip^{e_i} \bigg)^n   \\
&=  (-1)^m\prod_{i=\ell+1}^k u_i p^{e_i} \bigg( S^{(m)}(t)+
 \sum_{n\geq m+1}  S^{(n)}(t) \sum_{\substack{ j_{\ell+1}+\cdots+j_k=n \\  j_1,\dots,j_k\geq 1}}  \prod_{i=\ell+1}^k \frac{(u_ip^{e_i})^{j_i-1}}{j_i!}  \bigg)  ,
\end{align*}
as 
\[
\sum_{I\subset \{ \ell+1,\dots,k\}} (-1)^{|I|}  \bigg(\sum_{i\in I} u_ip^{e_i}  \bigg)^n =\begin{cases} 0 & \text{ if }n<m;\\
 (-1)^m m! \prod_{i=\ell+1}^k u_i p^{e_i}& \text{ if }n=m.
\end{cases}
\]

Now $v_p(S^{(m)}(t))=0$ since $p\nmid s^{(m)}(t)$.  When $n>m$  there is some $j_i>1$ in each of the summands for the $n$th term
and
 $p$ divides $(p^e)^{j-1}/j!$ whenever $j\geq 2$ as $p>2$, so the $n$th term is $0 \pmod p$. Therefore
\[
v_p(s^+(t)) = \sum_i e_i, 
\]
which implies that $v_p(s^+(t)+b) =\min \{ v_p(s^+(t)), v_p(b)\}$ and so the result follows.
 \end{proof}

 


\subsection{When $f^*$ is an $r$th power mod $p$}

\begin{lemma} \label{lem: rthpowers+} Let $f(t)\in \mathbb Q(t)$.
Assume that $p$ is a prime that does not divide $q_1\cdots q_k$. 
Let $r$ be an integer $>1$ that is not divisible by $p$.
If $f^*_{\big[\substack{h_{1,0} \dots, h_{k,0}  \\ h_{1,1}, \dots, h_{k,1}}\big]}(t)$ is a constant times  an $r$th power mod $p$ 
then either  $p$ divides $\prod_j (h_{j,0}- h_{j,1})$, or 
  $f(t)\equiv F(t)^r H(t^p-t) \pmod p$ for some $H(t)\in \mathbb Q(t)$.
 If $H(t) \pmod p$ is not a constant then $p\leq 2\deg f$.
\end{lemma}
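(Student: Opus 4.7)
The plan is to translate the condition ``$f^* \bmod p$ is a unit times an $r$th power'' into a divisibility condition on the multiplicities of the irreducible factors of $f \bmod p$, to resolve it using group-ring arithmetic over $\mathbb{F}_p$, and to read off the factorization $f \equiv F^r H(t^p-t) \pmod p$. I would assume $p \nmid \prod_j (h_{j,1}-h_{j,0})$ (otherwise we are done), set $a_i := (h_{i,1}-h_{i,0})q_i$ (each coprime to $p$) and $T_0 := \sum_i h_{i,0} q_i$, and define $E\colon \overline{\mathbb F}_p \to \mathbb Z$ by $E(\alpha) := v_{t-\alpha}(f \bmod p)$ (positive at zeros, negative at poles). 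Expanding $f^*$ and tracking each root, the multiplicity of any irreducible $P\in\mathbb{F}_p[t]$ in $f^* \bmod p$ is the $k$-fold alternating sum of $E$ along the shifts determined by $T_0$ and the $a_i$. Thus the hypothesis becomes
\[
 \prod_{i=1}^{k}(\sigma_{a_i} - \mathrm{id})\, E(\beta) \equiv 0 \pmod r \qquad \text{for all } \beta \in \overline{\mathbb F}_p,
\]
where $\sigma_a$ denotes translation by $a$.

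The main step is to analyze this in the group ring $R := (\mathbb Z/r)[\sigma]/(\sigma^p-1)$. Since $\gcd(r,p)=1$ and $\Phi_p(1)=p$ is a unit in $\mathbb{Z}/r$, the Chinese Remainder Theorem gives $R \cong (\mathbb Z/r) \times (\mathbb Z/r)[\zeta_p]$. For each $a\not\equiv 0 \pmod p$, the element $(\sigma^a-1)/(\sigma-1) = 1+\sigma+\cdots+\sigma^{a-1}$ is a unit in $\mathbb{Z}[\zeta_p]$ (its norm over $\mathbb{Z}$ is $1$) and hence in $R$; so $\prod_i (\sigma^{a_i}-1)$ and $(\sigma-1)^k$ generate the same ideal. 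Moreover, the identity $\prod_{j=1}^{p-1}(1-\zeta_p^j) = p$ shows $1-\zeta_p$ divides $p$, and since $p$ is a unit mod $r$, $\zeta_p-1$ is itself a unit in $(\mathbb{Z}/r)[\zeta_p]$. Thus $(\sigma-1)^k$ corresponds under CRT to $(0,\mathrm{unit})$, whose annihilator is the ``constants'' $(\mathbb{Z}/r) \times (0)$, and we conclude
$E(\alpha) \equiv E(\alpha+c) \pmod r$
for every $\alpha\in\overline{\mathbb F}_p$ and every $c\in\mathbb F_p$. This group-ring step is the main obstacle, relying crucially on both $p \nmid r$ and $p \nmid a_i$.

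Now I would partition the irreducible factors of $f \bmod p$ into $\mathbb{F}_p$-translation classes $\mathcal{C}$; within each class all exponents $e_Q$ share a common residue $e_{\mathcal{C}}\in\{0,\ldots,r-1\}$. Each orbit product $\prod_{Q\in\mathcal C} Q(t)$ is $\mathbb{F}_p$-translation invariant, hence by the standard fact that polynomials fixed by $t\mapsto t+1$ lie in $\mathbb{F}_p[t^p-t]$, equals $H_\mathcal{C}(t^p-t)$ for some squarefree $H_\mathcal{C}\in\mathbb{F}_p[y]$ (squarefreeness follows from the irreducibility of the $Q$'s together with $\prod_{c\in\mathbb{F}_p}(x-c) = x^p-x$). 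Writing $e_Q = e_{\mathcal{C}} + r\,n_Q$ yields
\[
 f \equiv c \cdot \prod_{\mathcal C} H_{\mathcal{C}}(t^p-t)^{e_{\mathcal{C}}} \cdot \Bigl(\prod_{\mathcal C}\prod_{Q\in\mathcal C} Q^{n_Q}\Bigr)^{\! r} \pmod p,
\]
which is the desired $f \equiv F^r H(t^p-t) \pmod p$ (absorbing $c$ into $H$). For the ``moreover'' part, if $H \bmod p$ is non-constant then some $e_{\mathcal{C}}\in\{1,\ldots,r-1\}$ contributes nontrivially with $H_\mathcal{C}$ non-constant; I pick a simple root $\gamma$ of $H_\mathcal{C}$, which is then a zero of $H$ of multiplicity exactly $e_{\mathcal{C}}$ (distinct classes give disjoint $y$-roots, by irreducibility). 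At each of the $p$ distinct roots $\alpha_1,\ldots,\alpha_p$ of $t^p-t=\gamma$ in $\overline{\mathbb F}_p$, we have $v_{\alpha_i}(f) = r\,v_{\alpha_i}(F) + e_{\mathcal{C}}$, which is $\not\equiv 0 \pmod r$ and hence nonzero in $\mathbb{Z}$. So $f \bmod p$ has at least $p$ distinct zeros and poles, giving $p \le \deg_p f_+ + \deg_p f_- \le 2\deg f$.
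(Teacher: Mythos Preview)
Your proof is correct and follows essentially the same route as the paper. Both arguments reduce the $r$th-power condition on $f^*$ to a divisibility constraint on the exponents of the translates of each irreducible factor, and both resolve it via the identical cyclotomic-unit observation: $(\zeta_p^{a}-1)/(\zeta_p-1)$ is a unit and $\zeta_p-1$ divides $p$, hence is invertible modulo $r$, forcing all exponents in a translation orbit to be congruent modulo $r$. You phrase this as a CRT computation in the group ring $(\mathbb Z/r)[\sigma]/(\sigma^p-1)$ and work with roots in $\overline{\mathbb F}_p$, whereas the paper substitutes $x=\zeta_p$ directly and works with irreducible factors over $\mathbb F_p$; the content is the same. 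For the ``moreover'' clause, both count the $p$ forced zeros/poles coming from one orbit with $e_{\mathcal C}\not\equiv 0$: you bound $\deg_p f_+ + \deg_p f_-\ge p$, while the paper splits into the $N$ positive and $p-N$ negative exponents to get $\deg f \ge \max\{Ne,(p-N)(r-e)\}\ge p/2$; either way one obtains $p\le 2\deg f$.
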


\begin{proof}   We reduce $f(t)$ mod $p$. Let $g(t)$ be an irreducible polynomial  mod $p$  that divides the numerator or denominator of  $f(t) \pmod p$ and let $ \prod_{m \pmod p} g(t+m)^{e_m}\| f(t)$ be the exact power of all the translates of $g(t)$ that divide the numerator or denominator of $f$. Then 
$      \prod_{n}  g(t+n)^{h_n} \| \prod_{i=1}^d \frac{ f(t+a_i)}{  f(t+b_i)}$
where $h_n = \sum_i e_{n-a_i} -  \sum_j e_{n-b_j}$ (here all subscripts are taken mod $p$). The $g$ factors yield an $r$th power mod $p$ if and only if each $h_n\equiv 0 \pmod r$; in other words,
\[
\sum_{m \pmod p} e_m x^{m} \cdot \bigg( \sum_i x^{a_i} -  \sum_j x^{b_j}\bigg) \equiv 0 \pmod {(r,x^p-1)}.
\]
For $f=f_{\big[\substack{h_{1,0},\dots, h_{k,0}  \\ h_{1,1},\dots, h_{k,1}}\big]}(t)$ this gives 
\[
\sum_{m \pmod p} e_m x^{m} \cdot \prod_{j=1}^k (x^{h_{j,0}q_j}-x^{h_{j,1}q_j})\equiv 0 \pmod {(r,x^p-1)}.
\]
We have a solution if $h_{\ell,0}\equiv h_{\ell,1} \pmod p$ for some $\ell, 1\leq \ell\leq k$. If not
 substitute in $x=\zeta_p$, a primitive $p$th root of unity, so that 
 \[
\zeta_p^c\cdot \sum_{m \pmod p} e_m \zeta_p^{m} \cdot \prod_{j=1}^k (\zeta_p^{b_j}-1)\equiv 0 \pmod r
\]
where $b_j=(h_{j,0} - h_{j,1})q_j\not\equiv 0 \pmod p$ and $c=\sum_j h_{j,1}q_j$. This is a congruence in the ring
$\mathbb Z[\zeta_p]$. Now each $\zeta_p^{b}-1$ is a $\zeta_p-1$ times a unit, $\zeta_p^c$ is a unit, and $\zeta_p-1$
divides $p$, and therefore all these contributions are coprime to $r$. Hence $r$ divides
$\sum_{m \pmod p} e_m \zeta_p^{m}$ which implies that the  $e_m$ are all congruent, say to $e$ mod $r$. This implies that
 \[
  \prod_{m} g(t+m)^{e_m} \equiv  G(t)^r \bigg(\prod_{m=0}^{p-1} g(t+m)\bigg)^e \pmod p
  \]
 for some   $G(t)$, a product of $g$-translates, while $\prod_{m=0}^{p-1} g(t+m)$ is a function of $t^p-t$.\footnote{Note that 
 $\prod_{m=0}^{p-1} (t-\alpha+m)=   (t-\alpha)^p-(t-\alpha)=t^p-t-(\alpha^p-\alpha)$.}
 But this must be true for any irreducible factor of $f(t)$ and so $f(t)$ is an $r$th power times  a function of $t^p-t$.
 
 If $e\equiv 0 \pmod r$ then $ \prod_{m} g(t+m)^{e_m}$ is an $r$th power; if this holds for all factors of $f$ then $f(t)$ is a constant times an $r$th power mod $p$.  Otherwise there is some such $g$ with $e\not\equiv 0 \pmod r$ with say $1\leq e\leq r-1$. If $N=\#\{ m: e_m>0\}$ then
 $\deg f\geq \deg  \prod_{m} g(t+m)^{e_m} \geq \max\{  Ne , (r-e)(p-N)\}  \geq p/2$.
 \end{proof}
 
 We deduce from Lemmas \ref{lem: rthpowers+}  and \ref{lem: f(t)=cF(t)^r} 
   
\begin{corollary} \label{cor: rthpowers+} Suppose that $f(x)\in \mathbb Q(x)$ and \eqref{eq: Condition} holds.
If $p$ is a prime that does not divide $\Delta(f,g,k)\cdot q_1\cdots q_k$ then  $f^*_{\big[\substack{h_{1,0} \dots, h_{k,0}  \\ h_{1,1}, \dots, h_{k,1}}\big]}(t)$
 is a constant times  an $r$th power mod $p$ for some integer $r\geq 2$ which is not divisible by $p$
if and only if   $p$ divides $\prod_j (h_{j,0}- h_{j,1})$ (in which case $f^*\equiv1 \pmod p$), or 
 $r$ divides $r_f$. 
 \end{corollary}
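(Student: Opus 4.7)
The corollary is an equivalence that should fall out by stringing together the two preceding lemmas, with the hypothesis $p \nmid \Delta(f,g,k)$ doing the work of killing every exceptional case.

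For the easy direction ($\Leftarrow$): if $h_{\ell,0}\equiv h_{\ell,1}\pmod p$ for some $\ell$, then since $p\nmid q_\ell$ we have $h_{\ell,0}q_\ell\equiv h_{\ell,1}q_\ell\pmod p$, and the $2^k$ factors of $f^*_{[\cdots]}(t)$ pair up across the $\ell$th coordinate into reciprocal pairs, yielding $f^*\equiv 1\pmod p$, which is trivially a constant times any $r$th power. If instead $r\mid r_f$, write $f=cF^{r_f}$ with $r_f=rs$; then
\[
f^*_{[\cdots]}(t)=c^{\sum_j\sigma(j)}\prod_{j\in\{0,1\}^k}F\bigl(t+h_{1,j_1}q_1+\cdots+h_{k,j_k}q_k\bigr)^{r_f\sigma(j)},
\]
and because $\sum_j\sigma(j)=(1-1)^k=0$ for $k\ge 1$, the constant drops out and $f^*=\bigl(\prod_j F(\cdots)^{s\sigma(j)}\bigr)^r$ is visibly an $r$th power.

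For the harder direction ($\Rightarrow$): assume $f^*_{[\cdots]}(t)$ is a constant times an $r$th power mod $p$ with $r\ge 2$ and $p\nmid r$. Apply Lemma \ref{lem: rthpowers+} to $f^*$: either $p\mid\prod_j(h_{j,0}-h_{j,1})$ (and we are done), or
\[
f(t)\equiv F(t)^r H(t^p-t)\pmod p
\]
for some $H\in\mathbb Q(t)$. In the latter case, the last clause of Lemma \ref{lem: rthpowers+} forces $p\le 2\deg f$ whenever $H\pmod p$ is non-constant; but our assumption $p\nmid\Delta(f,g,k)$ built into $\Delta(f,g,k)$ the product of all primes below $2(\deg f+\deg g)+k+16$, so in particular $p>2\deg f$. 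Hence $H\pmod p$ is a constant $c$, giving $f(t)\equiv cF(t)^r\pmod p$.

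We then invoke Lemma \ref{lem: f(t)=cF(t)^r}: either $p\mid\Delta(f)$, which is excluded since $\Delta(f)\mid\Delta(f,g,k)$ and $p\nmid\Delta(f,g,k)$, or $r\mid r_f$, which is the desired conclusion. The only mild obstacle is making the pairing-cancellation argument precise for the first $\Leftarrow$ case (one must note that $p$ does not divide any denominators encountered, which again follows from $p\nmid\Delta(f,g,k)\supseteq\Delta(f)$), and verifying that the numerical threshold built into $\Delta(f,g,k)$ indeed exceeds $2\deg f$; neither presents any real difficulty.
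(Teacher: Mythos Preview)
Your argument is correct and follows exactly the route the paper indicates: the paper's own proof is just the one-line remark ``We deduce from Lemmas \ref{lem: rthpowers+} and \ref{lem: f(t)=cF(t)^r}'', and you have faithfully fleshed out that deduction, including the easy $\Leftarrow$ direction which the paper leaves implicit. The only cosmetic point is your notation ``$\Delta(f,g,k)\supseteq\Delta(f)$'' for divisibility; otherwise nothing needs changing.
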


\subsection{The main complete exponential-character sum for difference functions, in context}

Recall that  $h(x)=h_{\chi,f,g}(x)= g'(x)+Cf'(x)/f(x) =p^\tau H(x)$ for some integer $C=C_\chi$ where $p\nmid H(x)$.
For any choice of the $h_{i,j}\in \mathbb Z$ write
 \[
 f^*=f^*_{\big[\substack{h_{1,0}, \dots, h_{k,0}  \\ h_{1,1} ,\dots, h_{k,1}}\big]} \text{ and } g^+=g^+_{\big[\substack{h_{1,0}, \dots, h_{k,0}  \\ h_{1,1} ,\dots, h_{k,1}}\big]}.
 \]
We wish to apply our results to
\[
\frac 1{p^m} \sum_{n \pmod {p^m}}  \chi ( f^*(n) ) e\bigg( \frac{ g^+(n)+bn}{p^{m}} \bigg).
\]
Noting that 
\[
\bigg(\frac{f'}{f} \bigg) ^+_{\big[\substack{h_{1,0} \dots, h_{k,0}  \\ h_{1,1}, \dots, h_{k,1}}\big]}(x) = (f^*)'_{\big[\substack{h_{1,0} \dots, h_{k,0}  \\ h_{1,1}, \dots, h_{k,1}}\big]}(x)/f^*_{\big[\substack{h_{1,0} \dots, h_{k,0}  \\ h_{1,1}, \dots, h_{k,1}}\big]}(x),
\]
we deduce that 
\[
 h^+_{\big[\substack{h_{1,0} \dots, h_{k,0}  \\ h_{1,1}, \dots, h_{k,1}}\big]}(x)=(g')^+_{\big[\substack{h_{1,0} \dots, h_{k,0}  \\ h_{1,1}, \dots, h_{k,1}}\big]}(x)+C\bigg(\frac{f'}{f} \bigg) ^+_{\big[\substack{h_{1,0} \dots, h_{k,0}  \\ h_{1,1}, \dots, h_{k,1}}\big]}(x) =p^\tau H^+_{\big[\substack{h_{1,0} \dots, h_{k,0}  \\ h_{1,1}, \dots, h_{k,1}}\big]}(t) .
 \]
Under the hypotheses of  Proposition \ref{prop: vp} we then obtain that, for any integer $b$, we have 
\[
v_p\bigg( H^+_{\big[\substack{h_{1,0} \dots, h_{k,0}  \\ h_{1,1}, \dots, h_{k,1}}\big]}(t)  \bigg)  =  \ v_p(\mathbf{h}) 
\]
and so
\[
v_p\bigg( h^+_{\big[\substack{h_{1,0} \dots, h_{k,0}  \\ h_{1,1}, \dots, h_{k,1}}\big]}(t) +b\bigg)  =  \min \{ \tau+v_p(\mathbf{h}), v_p(b) \}  =:\tau'.
\]
We will use the notations $\tau$ and $\tau'$ in the proof the following result.

 \begin{prop} \label{prop: vp rev} Suppose that $f(x),g(x)\in  \mathbb Q(x)$ and $f(x)$ and $g(x)$ are not both constants, and let  $D=2(\deg f+\deg g)$.
 Assume $p \nmid q_1\cdots q_k\Delta(f,g,k)$. 
 If $\chi$ is a character mod $p$ then
  \[
\bigg| \frac 1{p} \sum_{n \pmod {p}}  \chi ( f^*(n) ) e\bigg( \frac{ g^+(n)+bn}{p} \bigg) \bigg| 
\leq 2^{k+1}Dp^{-\frac{1-v_p(\mathbf{h})}2},
 \]
and if $\chi$ is a character mod $p^m$ for any $m\geq 1$, then
  \[
\bigg| \frac 1{p^m} \sum_{n \pmod {p^m}}  \chi ( f^*(n) ) e\bigg( \frac{ g^+(n)+bn}{p^m} \bigg) \bigg| 
\leq 2^{k}Dp^{-\frac{m-v_p(\mathbf{h})}{2^{k+2}D}},
 \]
unless $g$ is a polynomial of degree $\leq k$  and   $\chi^{r_f}$ is induced from a character of conductor $p^\ell$ with $\ell<m$. In these cases we get the bound $\leq 2^{k}Dp^{-\frac{\ell-v_p(\mathbf{h})}{2^{k+2}D}}$.
 \end{prop}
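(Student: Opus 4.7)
The plan is to apply Corollary~\ref{cor:chi-expand2} (for $m\ge 2$) and Proposition~\ref{prop: KeyEstimate3a} (for $m=1$) to the sum with $\tilde f:=f^*$ and $\tilde g(x):=g^+(x)+bx$. The starting observation is that $(\cdot)^+$ commutes with both differentiation and taking logarithmic derivatives, so $(g')^+=(g^+)'$ and $(f'/f)^+=(f^*)'/f^*$; consequently
\[
h_{\chi,\tilde f,\tilde g}(x)=\tilde g'(x)+C_\chi\tilde f'(x)/\tilde f(x)=h^+(x)+b,
\]
where $h=h_{\chi,f,g}=g'+C_\chi f'/f=p^\tau H$ with $p\nmid H$. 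The degree bounds $\deg\tilde f\le 2^k\deg f$ and $\deg\tilde g\le \deg g+1$ give $\tilde D:=\deg_p \tilde H_+\le 2^k D$. The value of $\tau$ is controlled by Corollary~\ref{cor: reduction2 mod p}: it is $0$ when $g$ is not constant, and equal to $v_p(C_\chi)$ when $g$ is constant but $f$ is not.

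For the generic case (i.e., when the exceptional hypothesis fails), I would note that $g$ is either non-constant of degree $>k$ or (if constant) $\chi^{r_f}$ is primitive mod $p^m$. Proposition~\ref{prop: vp} applies thanks to $p\nmid\Delta(f,g,k)q_1\cdots q_k$ (which forces $p\ge k+\deg H$), yielding $v_p(H^+)=v_p(\mathbf h)$ and, when the relevant degree $>k$ clause holds, $v_p(H^+ + b')=\min\{v_p(\mathbf h),v_p(b')\}$ for any integer $b'$. Combining this with the structural information on $\tau$ gives $\tilde\tau:=v_p(h^++b)\le v_p(\mathbf h)$. Feeding $\tilde\tau$ and $\tilde D$ into Corollary~\ref{cor:chi-expand2} yields
\[
|S|\le \tilde D\, p^{-\lceil (m-\tilde\tau-1)/(2\tilde D)\rceil}\le 2^kD\,p^{-(m-v_p(\mathbf h))/(2^{k+2}D)},
\]
where the factor of two in the denominator absorbs both the ceiling and the $-1$. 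For $m=1$ one runs the same strategy through Proposition~\ref{prop: KeyEstimate3a}; its exceptional clause (requiring $\tilde g$ to be constant and $\tilde f$ to be a constant times an $r$th power mod $p$) is, by Corollary~\ref{cor: rthpowers+}, incompatible with $p\nmid\Delta(f,g,k)q_1\cdots q_k$ unless $p\mid\prod_j(h_{j,0}-h_{j,1})$, in which case $v_p(\mathbf h)\ge 1$ and the target bound is trivial.

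For the exceptional case ($g$ a polynomial of degree $\le k$ with $\chi^{r_f}$ induced from a primitive character of conductor $p^\ell$, $\ell<m$), $k$-fold $+$-differencing annihilates $g'$, so $\tilde h=b+C_\chi(f'/f)^+$; the inflation $v_p(C_\chi)=m-\ell$ is exactly what spoils the primitive-case estimate. I would then use $f=cF^{r_f}$ to factor $\chi(f^*(n))=\chi^{r_f}(F^*(n))$ (the constant $c$ cancels across the $2^k$ signed copies) and descend to modulus $p^\ell$, where Corollary~\ref{cor:chi-expand2} applied to the primitive character of conductor $p^\ell$ yields the claimed weaker bound $2^k D\, p^{-(\ell-v_p(\mathbf h))/(2^{k+2}D)}$. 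The main obstacle here is not analytic but bookkeeping: one has to verify carefully that the degree hypothesis $\deg_p H>k$ of Proposition~\ref{prop: vp} (needed for the $+b$ refinement, not merely the equality $v_p(H^+)=v_p(\mathbf h)$) is exactly what fails precisely when $g$ is a polynomial of degree $\le k$ — this is what forces the dichotomy in the proposition's statement and is traceable via Lemma~\ref{lem: h=c  mod p} and Corollary~\ref{cor: reduction  mod p}.
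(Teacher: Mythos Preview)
Your treatment of the generic case and of $m=1$ matches the paper's closely. The substantive divergence is in the exceptional case ($g$ a polynomial of degree $\le k$ with $\chi^{r_f}$ imprimitive), where you propose to factor $f=cF^{r_f}$, rewrite $\chi(f^*(n))=\chi^{r_f}(F^*(n))$, and descend to modulus $p^\ell$. This does not work as stated: the additive factor $e\big((g^+(n)+bn)/p^m\big)$ is not $p^\ell$-periodic in $n$ (even though $g^+$ is constant here, the term $e(bn/p^m)$ has period $p^m/\gcd(b,p^m)$). One can salvage the descent by first summing over the fibres $n\equiv a\pmod{p^\ell}$ and observing that the inner geometric sum vanishes unless $p^{m-\ell}\mid b$, but you do not mention this, and the resulting bookkeeping (degree bounds for $F^*$ versus $f^*$, the cases $\ell\le 1$) is not entirely free. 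Your remark ``the inflation $v_p(C_\chi)=m-\ell$'' also conflates the conductor of $\chi$ with that of $\chi^{r_f}$: by the definition preceding Corollary~\ref{cor:chi-expand2}, $C_\chi$ records the former, not the latter.

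The paper sidesteps all of this. Its Case~(II) stays at modulus $p^m$ and observes that when $f$ is not constant (and $p\nmid\Delta(f)$), the logarithmic derivative $f'/f$ has simple poles mod $p$, so $H(t)$ is \emph{not} a polynomial mod $p$. Hence the polynomial clause in the hypothesis of Proposition~\ref{prop: vp} (``if $H$ reduces to a polynomial mod $p$ then $\deg_pH>k$'') is vacuous, and the second version of that proposition applies directly to give
\[
\tau'=v_p(h^++b)=\min\{\tau+v_p(\mathbf h),\,v_p(b)\}\le (m-\ell)+v_p(\mathbf h),
\]
with $\ell$ the conductor of $\chi$ (which is at least that of $\chi^{r_f}$, so the stated bound follows). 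Plugging this $\tau'$ into Corollary~\ref{cor:chi-expand2} at level $p^m$ yields the bound with $\ell$ in place of $m$ without any descent. The residual case $f$ constant (Case~(III)) is disposed of trivially since $\chi^{r_f}$ is principal by convention. This is both shorter and avoids the additive-character obstruction your approach would have to handle.
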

  
 We take the principal character to be induced from the character of conductor $p^0$.

  \begin{proof} When $m=1$ we need consider only $v_p(\mathbf{h})=0$ else the result is trivial.
 One gets a bound
 $\leq 2^{k+1}Dp^{-\frac{1}2}\leq 2^{k}Dp^{-\frac{1-v_p(\mathbf{h})}{2^{k+2}D}}$ by Proposition \ref{prop: KeyEstimate3a}   
 unless $g^+(x)+bx\equiv c_2 \pmod p$ (which implies that $g(x)$ mod $p$ is a polynomial of degree $\leq k+1$)\footnote{To see that $g(x)$ mod $p$ must be a polynomial proceed as in the middle part of the proof of 
 Proposition \ref{prop: vp} (with $H(x)$ there replaced by $g(x)$ here, and with $\ell=k$ --- see there how consideration of the form of $r(t)$ leads to this conclusion for $s(t)$) to deduce that $g^+(x)$ mod $p$ is not a polynomial, a contradiction.}  and, either $\chi$ is principal or   there exists $F(x)\in \mathbb Q(x)$ with
 $f^*(x)\equiv c_1 F(x)^{r_\chi} \pmod p$ where $r_\chi$ is the order of $\chi$ which implies that $r_\chi$ divides $r_f$ by Corollary \ref{cor: rthpowers+}, whence $\chi^{r_f}$ is principal. Therefore $\ell=0$ and the result is trivial.

Now let $m\geq 2$  and $\chi$ be a character mod $p^m$. We separate into three cases:

(I):\ If $g(x)$ is not constant, or if $g(x)$ is a constant and $\chi$ is primitive then  $\tau=0$ by Corollary \ref{cor: reduction2  mod p}. If $H(t)\pmod p$ is a polynomial then $g(t)$ is a polynomial, and $f$ is a constant or $\chi$ is imprimitive, by Corollary \ref{cor: reduction  mod p}.
 Then $\deg_pH\geq k$ if and only if $\deg_p g\geq k+1$, if and only if $\deg g\geq k+1$ (since $p\nmid \Delta(g)$). Therefore
 Proposition \ref{prop: vp} is   applicable unless $g$ is a polynomial of degree $\leq k$, and $f$ is a constant or $\chi$ is imprimitive. If
 Proposition \ref{prop: vp} is   applicable then we can apply Corollary \ref{cor:chi-expand2} with $f$ and $g$ replaced by $f^*$ and $g^++bx$, and then $\tau$ replaced by $\tau'=  \min \{  v_p(\mathbf{h}), v_p(b) \} \leq v_p(\mathbf{h})$ to obtain  the bound
$\leq 2^{k}Dp^{-\frac{m-1-\tau'}{2^{k+1}D}}  \leq 2^{k}Dp^{-\frac{m-v_p(\mathbf{h})}{2^{k+2}D}}$.

(II):\  If $g(x)$ is a polynomial of degree $\leq k$, $f(x)$ is not a constant and $\chi$ is induced from a character of conductor $p^\ell$ with $\ell<m$ then  $\tau=m-\ell$ above. Now 
 $f(t) \pmod p$ is not a constant as $p\nmid \Delta(f)$. Therefore Proposition \ref{prop: vp} is   applicable and so
 we can apply Corollary \ref{cor:chi-expand2} with $f$ and $g$ replaced by $f^*$ and $g^++bx$, and then $\tau$ replaced by 
 $\tau'=  \min \{ m-\ell+ v_p(\mathbf{h}), v_p(b) \} \leq m-\ell+v_p(\mathbf{h})$ to obtain  the bound
$\leq 2^{k}Dp^{-\frac{\ell-1-v_p(\mathbf{h})}{2^{k+1}D}}  \leq 2^{k}Dp^{-\frac{\ell-v_p(\mathbf{h})}{2^{k+2}D}}$.

(III):\   If $g$ is a polynomial of degree $\leq k$ and $f$ is a constant   then $\chi^{r_f}$ is principal by definition, so that $\ell=0$ and the result is trivial. 
\end{proof}


\section{Basic inequalities for 1-bounded periodic functions} 

The heart of our proof lies in precise van der Corput differencing which is explained in this section. We will eventually apply the results of this section by taking (for $a=a_q$)
\[
a_q(n)=\chi_q(f(n)) e\bigg( \frac{ g(n)}q  \bigg).
\]

Let $a_q(\cdot)$ be a function of period  $q$ with each $|a_q(n)|\leq 1$, 
and if $q=Qr$ where $(Q,r)=1$ then $a_q(n)=a_Q(n) a_r(n)$.
Our goal in this section is to develop methods to bound incomplete sums of $a_q(\cdot)$ in terms of complete sums, perhaps ``twisted''; that is, for  
  intervals $I$, of length $N$, techniques to bound 
  \[
\mathbb E_q(a):= \frac 1N \sum_{n\in I} a_q(n) .
\]

Using the Chinese Remainder Theorem and Fourier analysis we will prove
 
 \begin{lemma} \label{Lem: BdsInInterval}
If $N$ is the length of the interval $I$ then 
\begin{equation}\label{eq: CRT}
\frac 1N \bigg|  \sum_{n\in I} a_q(n) \bigg|   \ll \bigg( 1+ \frac {q \log q}N\bigg)  \cdot \  \prod_{ p^e\| q} \ \max_{b \pmod {p^e} }  \bigg|  \frac{1}{p^e} \sum_{r \pmod {p^e}} a_{p^e}(r)     
e\bigg( \frac{br}{p^e}\bigg) \bigg| .
\end{equation}
\end{lemma}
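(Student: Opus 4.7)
The plan is to exploit the multiplicativity hypothesis $a_q(n)=a_Q(n)a_r(n)$ for $(Q,r)=1$ (iterated over all prime powers dividing $q$) together with a Fourier expansion on each prime power modulus, then combine via CRT to get a single Fourier expansion of $a_q$ modulo $q$, and finally estimate the resulting incomplete geometric sums. First I would write $a_q(n)=\prod_{p^e\|q} a_{p^e}(n)$ and, for each prime power, expand
\[
a_{p^e}(n) = \sum_{b_p\pmod{p^e}} \widehat a_{p^e}(b_p)\, e\!\left(\frac{b_p n}{p^e}\right), \quad \widehat a_{p^e}(b_p)=\frac{1}{p^e}\sum_{r\pmod{p^e}} a_{p^e}(r) e\!\left(-\frac{b_p r}{p^e}\right).
\]
Multiplying out, the product of exponentials $\prod_{p^e\|q} e(b_p n/p^e)$ equals $e(Bn/q)$, where $B\pmod q$ is determined by the tuple $(b_p)$ via the standard CRT/partial fraction bijection $B\leftrightarrow (b_p)$ (using $1/q=\sum_{p^e\|q} c_p/p^e$ from Bezout). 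Since this bijection is measure-preserving on $\mathbb Z/q\mathbb Z\leftrightarrow\prod_{p^e\|q}\mathbb Z/p^e\mathbb Z$, we obtain
\[
a_q(n) = \sum_{B\pmod q} c(B)\, e\!\left(\frac{Bn}{q}\right), \qquad c(B):=\prod_{p^e\|q} \widehat a_{p^e}(b_p(B)).
\]

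Next I would substitute this expansion into $\sum_{n\in I} a_q(n)$, swap orders of summation, and apply the classical geometric-sum estimate $|\sum_{n\in I} e(Bn/q)|\leq \min(N, 1/(2\|B/q\|))$ where $\|\cdot\|$ is distance to the nearest integer. The crucial observation is the $L^\infty$ bound
\[
|c(B)| \leq \prod_{p^e\|q} \max_{b_p\pmod{p^e}} |\widehat a_{p^e}(b_p)| =: M,
\]
which factorizes over prime powers precisely because the CRT bijection allows each $b_p$ to be maximized independently. Pulling $M$ out of the sum over $B$ and bounding $\sum_{B\pmod q} \min(N,1/(2\|B/q\|))$ by $N$ (from the $B\equiv 0$ term) plus $\sum_{1\leq |B|\leq q/2} q/|B| \ll q\log q$ yields
\[
\bigg|\sum_{n\in I} a_q(n)\bigg| \leq M\cdot(N+O(q\log q)),
\]
and dividing through by $N$ gives the stated inequality.

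The only real subtlety is the bookkeeping for the CRT step: one must check that as $B$ ranges over $\mathbb Z/q\mathbb Z$, the associated components $b_p(B)$ range independently over $\mathbb Z/p^e\mathbb Z$, so that the product over $p^e\|q$ of $\max_{b_p}|\widehat a_{p^e}(b_p)|$ genuinely dominates $|c(B)|$ uniformly in $B$. This follows from the invertibility mod $p^e$ of the Bezout coefficients $c_p$, but it is the step where one could accidentally lose a factor if the decomposition is set up incorrectly; the rest of the argument is a standard incomplete-sum completion via Fourier analysis.
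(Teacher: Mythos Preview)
Your proposal is correct and follows essentially the same route as the paper: Fourier-expand $a_q$ modulo $q$, factor the Fourier coefficients over prime powers via CRT to get the uniform bound $|c(B)|\le M$, and control $\sum_{B\pmod q}\min(N,\|B/q\|^{-1})\ll N+q\log q$. The paper first peels off $\lfloor N/q\rfloor$ complete periods (handled by the $b=0$ term) and only applies Fourier analysis to the leftover interval of length $<q$, whereas you Fourier-expand directly on all of $I$; this is a cosmetic reordering, not a different argument.
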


Since we cannot hope for better than square-root cancellation, this bound (which works for general moduli $q$)  can only be non-trivial for $N>q^{1/2+o(1)}$. The following lemma will allow us to work with shorter intervals provided $q$ factors into small pieces.
Unfortunately we need to introduce some substantial notation to understand how we apply van der corput-like differencing:

Given integers $q_1,\dots, q_\ell\geq 1$ and $h_{i,j}\geq 1$ with $1\leq i\leq \ell, 0\leq j\leq 1$ we define
\begin{align*}
a_{\big[\substack{h_{1,0},h_{2,0},\dots, h_{\ell,0}  \\ h_{1,1},h_{2,1},\dots, h_{\ell,1}}\big]}(n)
&= \prod_{\substack{j_1,\dots ,j_\ell\in \{ 0,1\} \\   j_1+\cdots +j_\ell \equiv 0 \pmod 2}} a(n+h_{1,j_1}q_1+\cdots +h_{\ell,j_\ell}q_\ell) 
\\
& \times \prod_{\substack{j_1,\dots ,j_\ell\in \{ 0,1\} \\   j_1+\cdots +j_\ell \equiv 1 \pmod 2}} \overline{a(n+h_{1,j_1}q_1+\cdots +h_{\ell,j_\ell}q_\ell) }
\end{align*}
a product of $a$ and $\overline{a}$ each evaluated at
 $2^{\ell-1}$ linear factors.
Given integers $M_1,\cdots, M_\ell\geq 1$ we now define
\[
\textbf{E}_{Q}^{(i)} (a_{(M_1,\cdots, M_\ell) } ) :=
\frac 1{M_1^2\cdots M_\ell^2} \sum_{\substack{1\leq h_{j,0} \ne h_{j,1} \leq M_j \\ \text{for } 1\leq j\leq \ell}}
\bigg| \mathbb{E}_{Q}\bigg(  a_{\big[\substack{h_{1,0},\dots, h_{\ell,0}  \\ h_{1,1},\dots, h_{\ell,1}}\big]}\bigg) \bigg|^i
\]
for $i=1$ and $2$ and let $\textbf{E}_{Q} =\textbf{E}_{Q}^{(1)}$, so that by Cauchying
\[
\textbf{E}_{Q}  (a_{(M_1,\cdots, M_\ell) } )^2\leq \textbf{E}_{Q}^{(2)} (a_{(M_1,\cdots, M_\ell) } ).
\]

\begin{corollary} \label{cor: First thm}
Given $N$, suppose that integer  $q=q_1q_2\cdots q_kQ$ where the $q_i$ and $Q$ are pairwise coprime, and
each $q_j\leq N^{1-\epsilon}$.  Then
  \[
|\mathbb E_q(a)| \ll  k  N^{-\epsilon/2^{k}}  +    \textbf{E}_{Q}  (a_{(M_1,\cdots, M_k) } )^{1/2^k}.
\]
\end{corollary}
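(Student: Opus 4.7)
The plan is to iterate a one-step van der Corput differencing inequality $k$ times, peeling off one factor $q_j$ at each stage, with the essentially optimal choice $M_j\asymp \sqrt{N/q_j}$ throughout. The single building block is the following: for any divisor $q'\mid q$, any $M\ge 1$, and any $1$-bounded $q$-periodic $a$ on an interval $I$ of length $N$,
\[
|\mathbb{E}_q(a)|^2 \ll \tfrac{1}{M}+\tfrac{Mq'}{N}+\textbf{E}_q(a_{(M)}),
\]
where $\textbf{E}_q(a_{(M)})$ is the average over pairs $1\le h_0\ne h_1\le M$ of $|\mathbb{E}_q(b_{h_0,h_1})|$ with $b_{h_0,h_1}(n):=a(n+h_0q')\overline{a(n+h_1q')}$. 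This is the classical argument: shift $\sum_{n\in I}a(n)$ by $hq'$ for $h=1,\dots,M$ (losing $O(Mq')$ at the boundary), average over $h$, apply Cauchy--Schwarz in $n$ on a slightly enlarged interval of length $\asymp N$, expand the square, and peel off the $M$ diagonal contributions (giving $1/M$ after normalization) from the $M(M-1)$ off-diagonal ones (giving $\textbf{E}_q(a_{(M)})$).

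For the iteration I observe that every $b_{h_0,h_1}$ is again $q$-periodic and $1$-bounded, so the same inequality reapplies with $q'=q_2$ and $M=M_2$. Averaging over the first pair and using the Cauchy--Schwarz inequality $\textbf{E}_q(a_{(M_1)})\le \textbf{E}_q^{(2)}(a_{(M_1)})^{1/2}$ converts this into
\[
\textbf{E}_q(a_{(M_1)})^2 \ll \tfrac{1}{M_2}+\tfrac{M_2q_2}{N}+\textbf{E}_q(a_{(M_1,M_2)}).
\]
Iterating for $j=3,\dots,k$ and taking square roots at every level via $(x+y)^{1/2}\le x^{1/2}+y^{1/2}$ produces the telescoping bound
\[
|\mathbb{E}_q(a)|\ll \sum_{j=1}^k \Bigl(\tfrac{1}{M_j}+\tfrac{M_jq_j}{N}\Bigr)^{1/2^j} + \textbf{E}_q(a_{(M_1,\dots,M_k)})^{1/2^k}.
\]
The replacement of $\textbf{E}_q$ by $\textbf{E}_Q$ in the conclusion is essentially notational: after $k$ differencings, each factor $a_{q_j}$ inside $a_{[\cdot]}(n)$ is paired with its own conjugate at the same argument (shifts by multiples of $q_j$ do not move $a_{q_j}$), so the $a_{q_j}$-parts collapse to non-negative $1$-bounded reals and are absorbed into the trivial bound, leaving $a_Q$ as the active factor.

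Finally, choosing $M_j:=\lfloor \sqrt{N/q_j}\rfloor$ balances the two error terms in each summand to $\ll \sqrt{q_j/N}\le N^{-\epsilon/2}$ under the hypothesis $q_j\le N^{1-\epsilon}$, so the $j$-th term is $\ll N^{-\epsilon/2^{j+1}}$ and their sum is $\ll k N^{-\epsilon/2^k}$ after harmlessly adjusting $\epsilon$ by a constant factor that is absorbed into the $\ll$-constant. The only place that genuinely needs careful bookkeeping, and the main source of friction I anticipate in writing the proof out in detail, is tracking the effective length of the interval across the iteration: each step enlarges $I$ by $O(M_jq_j)=O(\sqrt{Nq_j})\le O(N^{1-\epsilon/2})$, so across $k$ applications $|I|$ stays $\asymp N$ and the $1/|I|$ normalizations behind $\mathbb{E}_q$ and $\textbf{E}_q$ remain consistent stage by stage.
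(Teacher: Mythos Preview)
Your overall strategy—iterate a one-step van der Corput inequality $k$ times, peeling off one $q_j$ at each stage—is exactly the paper's approach (Proposition~\ref{prop: First thm} via Lemma~\ref{lem: BasicIneq}). The choice $M_j\asymp\sqrt{N/q_j}$ in place of the paper's $(N/q_j)^{2/3}$ is harmless: it corresponds to stating the looser boundary error $Mq'/N$ in the squared inequality rather than the sharper $(Mq'/N)^2$, and both choices give a power saving under $q_j\le N^{1-\epsilon}$.

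There is, however, a genuine gap in your passage from $\textbf{E}_q$ to $\textbf{E}_Q$. You correctly observe that after $k$ differencings the $a_{q_j}$-factors pair off into products of $|a_{q_j}(\cdot)|^2\in[0,1]$, so that $(a_q)_{[\cdot]}(n)=c(n)\,(a_Q)_{[\cdot]}(n)$ with $0\le c(n)\le 1$. But this does \emph{not} give
\[
\Big|\tfrac1N\sum_{n\in I} c(n)\,b(n)\Big|\le\Big|\tfrac1N\sum_{n\in I} b(n)\Big|;
\]
a $[0,1]$-valued weight can destroy cancellation (take $b(n)=\pm1$ with mean zero and $c=1_{b=1}$). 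So one cannot collapse to $\textbf{E}_Q$ a posteriori as you propose.

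The fix—and this is precisely how the paper handles it in Lemma~\ref{lem: BasicIneq}—is to drop the $a_{q_j}$-factor \emph{at step $j$}, before Cauchy--Schwarz. Writing $a_{Q_{j-1}}(n+hq_j)=a_{q_j}(n)\,a_{Q_j}(n+hq_j)$ by $q_j$-periodicity (with $Q_0=q$ and $Q_j=Q_{j-1}/q_j$), one has
\[
\Big|\sum_{h}a_{Q_{j-1}}(n+hq_j)\Big|=|a_{q_j}(n)|\cdot\Big|\sum_{h}a_{Q_j}(n+hq_j)\Big|\le\Big|\sum_{h}a_{Q_j}(n+hq_j)\Big|,
\]
and \emph{then} Cauchy--Schwarz is applied to the $a_{Q_j}$-sum. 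Thus the iteration runs through $a_{Q_0}=a_q,\,a_{Q_1},\,\dots,\,a_{Q_k}=a_Q$, and the final bound already involves $\textbf{E}_Q$. This is a small change to your argument, but it is essential: the modulus must be reduced inside each Cauchy step, not after all of them.
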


This is an immediate Corollary of Proposition \ref{prop: First thm} below.  To obtain a good bound on 
$|\mathbb E_q(a)|$ we will need a good bound on the 
\[
\bigg| \mathbb E_{Q}\bigg(  a_{\big[\substack{h_{1,0},\dots, h_{\ell,0}  \\ h_{1,1},\dots, h_{\ell,1}}\big]}\bigg) \bigg|
\]
on average, and to do this we use the decomposition in Lemma \ref{Lem: BdsInInterval} as we will discuss in section \ref{sec: Together}.

\subsection{Summing up displacements}

\begin{lemma}  \label{lem: BasicIneq}
Suppose that $q=rQ$ with $(r,Q)=1$, and let $M=M_r=\lfloor (N/r)^{2/3}\rfloor \geq 5$. Then
\begin{equation} \label{BasicInequality}
|\mathbb E_q(a)|^2\leq \frac 5{M_r}
+  \frac 2{M_r^2} \sum_{1\leq i\ne j\leq M_r}  |\mathbb E_Q(A_{(ir,jr)})|  ,
\end{equation}
where $A_ {(ir,jr)}(n):= a(n+ir) \overline{a(n+jr)} $.
\end{lemma}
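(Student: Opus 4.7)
The plan is a van der Corput / Weyl differencing argument using shifts of size $jr$ for $j=1,\dots,M$. Since $a=a_q=a_ra_Q$ with $a_r$ of period $r$, such shifts leave the $a_r$-factor invariant, so when I later expand the square arising from Cauchy--Schwarz the $a_r$-contribution collapses to a weight of modulus $\leq 1$ and only the $a_Q$-part remains; this is why the bound is phrased in terms of $\mathbb E_Q$. The main obstacle is not conceptual: it is the bookkeeping of constants forcing the hypothesis $M\geq 5$.

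First I would estimate the displacement error. Put $S:=\sum_{n\in I}a(n)$ and $T_j:=\sum_{n\in I}a(n+jr)=\sum_{m\in I+jr}a(m)$. Since $|I\triangle(I+jr)|\leq 2jr$ and $|a(n)|\leq 1$, the triangle inequality gives $|T_j-S|\leq 2jr$, so averaging over $1\leq j\leq M$,
\[
MS = U + O\bigl(M(M+1)r\bigr),\qquad U:=\sum_{j=1}^M T_j = \sum_{n\in I}\sum_{j=1}^M a(n+jr),
\]
with implicit constant at most $1$. Applying $(x+y)^2\leq 2(x^2+y^2)$ and then Cauchy--Schwarz in $n$,
\[
M^2|S|^2 \leq 2|U|^2 + 2M^2(M+1)^2 r^2,\qquad |U|^2 \leq N\sum_{n\in I}\Bigl|\sum_{j=1}^M a(n+jr)\Bigr|^2 = N\sum_{i,j=1}^M\sum_{n\in I}A_{(ir,jr)}(n).
\]
The diagonal $i=j$ contributes at most $MN$ since $|A_{(ir,ir)}(n)|=|a(n+ir)|^2\leq 1$; the off-diagonal, after the triangle inequality and the definition $\mathbb E_Q(A)=\tfrac1N\sum_{n\in I}A(n)$, is bounded by $N^2\sum_{i\ne j}|\mathbb E_Q(A_{(ir,jr)})|$. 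Hence $|U|^2\leq N^2M + N^2\sum_{i\ne j}|\mathbb E_Q(A_{(ir,jr)})|$.

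Combining with the displacement inequality and dividing through by $M^2N^2$,
\[
|\mathbb E_q(a)|^2 \leq \frac{2}{M} + \frac{2}{M^2}\sum_{i\ne j}|\mathbb E_Q(A_{(ir,jr)})| + \frac{2(M+1)^2 r^2}{N^2}.
\]
The choice $M=\lfloor(N/r)^{2/3}\rfloor$ gives $M^3\leq N^2/r^2$, so the last error is at most $2(1+1/M)^2/M$. The hypothesis $M\geq 5$ yields $2(1+1/M)^2\leq 2(6/5)^2=72/25<3$, so this term is $<3/M$, and $2/M+3/M=5/M$ produces the claimed bound.

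The argument is entirely standard; the only delicate point is the optimization of $M$. The exponent $2/3$ in $M=\lfloor(N/r)^{2/3}\rfloor$ equates, up to constants, the translation error $M^2r^2/N^2$ with the diagonal contribution $1/M$, and the threshold $M\geq 5$ is precisely what is needed to absorb the factor $(1+1/M)^2$ into the constant $5$ on the right-hand side.
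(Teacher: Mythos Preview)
Your proof is correct and follows essentially the same van der Corput differencing route as the paper: shift by multiples of $r$, Cauchy--Schwarz, separate diagonal from off-diagonal, and balance the error term against $1/M$ via the choice $M=\lfloor(N/r)^{2/3}\rfloor$. The one cosmetic difference is that the paper factors $a_q(n+ir)=a_r(n)a_Q(n+ir)$ and discards $|a_r(n)|\leq 1$ \emph{before} Cauchy--Schwarz (so its inner expansion involves $a_Q$ rather than $a$), whereas you Cauchy directly on $\sum_j a(n+jr)$; since the lemma defines $A_{(ir,jr)}$ with $a$ itself, your version matches the statement on the nose, and your explicit verification that $2(1+1/M)^2<3$ for $M\geq 5$ fills in the constant check that the paper leaves implicit.
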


\begin{proof}
For any positive integer $\Delta$
\[
\bigg| \sum_{n\in I} a_q(n) - \sum_{n\in I} a_q(n+\Delta) \bigg| \leq 2\Delta.
\]
Moreover $a_q(n+ir) =a_r(n) a_Q(n+ir)$ since $a_r(n)$ is periodic mod $r$, so that 
\begin{align*}
M \sum_{n\in I} a_q(n) & = \sum_{i=1}^M  \sum_{n\in I} a_q(n+ir) + \Theta((M^2+M)r)\\
&=  \sum_{n\in I}  a_r(n)  \sum_{i=1}^M   a_Q(n+ir) + \Theta((M^2+M)r),
\end{align*}
where $\Theta(t)$ means $\leq |t|$.
Therefore
\[
|\mathbb E_q(a)| \leq \frac 1  {MN} \sum_{n\in I} \bigg| \sum_{i=1}^M   a_Q(n+ir)\bigg| +  \frac{(M+1)r}N .
\]
Squaring and Cauchying gives
\[
|\mathbb E_q(a)|^2 \leq \frac 2{M^2}\sum_{1\leq i,j\leq M} \frac 1N \sum_{n\in I}   a_Q(n+ir) \overline{a_Q(n+jr)} +  \frac{2(M+1)^2r^2}{N^2}.
\]
The $i=j$ terms each contribute $\leq 1$ and so 
\[
|\mathbb E_q(a)|^2 \leq  \frac{2(M+1)^2r^2}{N^2} + \frac 2M +  \frac 2{M^2}\sum_{1\leq i\ne j\leq M} \mathbb E_Q(A_{(ir,jr)}),
\]
  so that \eqref{BasicInequality} holds.
\end{proof}

\subsection{Iterating the first bound}

\begin{prop} \label{prop: First thm}
We will write $q=q_1q_2\cdots q_kQ$ where the $q_i$ and $Q$ are pairwise coprime, and
   let $M_j=\lfloor(N/q_j)^{2/3}\rfloor$ for each $j$.  Then
  \[
|\mathbb E_q(a)| \leq 4 \sum_{i=1}^k M_i^{-2^{-i}}  
+ 4   \textbf{E}_{Q}  (a_{(M_1,\cdots, M_k) } )^{1/2^k}.
\]
\end{prop}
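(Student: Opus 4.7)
The plan is to prove Proposition \ref{prop: First thm} by induction on $k$, iterating Lemma \ref{lem: BasicIneq} one factor $q_j$ at a time.

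The base case $k=1$ is essentially the lemma itself: applying it with $r=q_1$ gives $|\mathbb{E}_q(a)|^2 \leq 5/M_1 + 2\,\textbf{E}_Q(a_{(M_1)})$, after which $\sqrt{u+v} \leq \sqrt u + \sqrt v$ plus the numerical facts $\sqrt 5, \sqrt 2 < 4$ close it. For the inductive step I would write $q = q_1 \cdot q'$ with $q' = q_2 \cdots q_k Q$ and apply Lemma \ref{lem: BasicIneq} with $r=q_1$ to reduce to bounding the average
\[
\frac{1}{M_1^2} \sum_{1 \le h_{1,0} \ne h_{1,1} \le M_1} |\mathbb{E}_{q'}(b_{h_{1,0},h_{1,1}})|,\qquad b_{h_{1,0},h_{1,1}}(n) := a(n+h_{1,0}q_1)\overline{a(n+h_{1,1}q_1)}.
\]
Each $b_{h_{1,0},h_{1,1}}$ is $1$-bounded, of period dividing $q$, and splits multiplicatively across the coprime decomposition of $q'$ (the $q_1$-periodic factor $|a_{q_1}|^2 \le 1$ can be absorbed as an irrelevant $1$-bounded multiplier). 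Hence the induction hypothesis applies, producing the bound $4\sum_{j=2}^k M_j^{-2^{-(j-1)}} + 4\,\textbf{E}_Q((b_{h_{1,0},h_{1,1}})_{(M_2,\ldots,M_k)})^{1/2^{k-1}}$. A direct expansion verifies the shift-of-shift identity
\[
(a_{[h_{1,0}/h_{1,1}]})_{[\substack{h_{2,0},\ldots,h_{k,0}\\h_{2,1},\ldots,h_{k,1}}]} = a_{[\substack{h_{1,0},\ldots,h_{k,0}\\h_{1,1},\ldots,h_{k,1}}]},
\]
so that averaging in $h_{1,0},h_{1,1}$ reassembles exactly $\textbf{E}_Q(a_{(M_1,\ldots,M_k)})$.

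The crux of the argument, and the only point needing real care, is to pull the $1/2^{k-1}$ exponent out through the $(h_{1,0},h_{1,1})$-average. Since $x \mapsto x^{1/2^{k-1}}$ is concave on $[0,\infty)$, Jensen's inequality (together with $M_1(M_1-1) \le M_1^2$) yields
\[
\frac{1}{M_1^2} \sum_{h_{1,0} \ne h_{1,1}} \textbf{E}_Q\big((b_{h_{1,0},h_{1,1}})_{(M_2,\ldots,M_k)}\big)^{1/2^{k-1}} \leq \textbf{E}_Q(a_{(M_1,\ldots,M_k)})^{1/2^{k-1}}.
\]
Plugging back in gives
\[
|\mathbb{E}_q(a)|^2 \leq \frac{5}{M_1} + 8\sum_{j=2}^k M_j^{-2^{-(j-1)}} + 8\,\textbf{E}_Q(a_{(M_1,\ldots,M_k)})^{1/2^{k-1}},
\]
after which $\sqrt{u+v+w} \le \sqrt u + \sqrt v + \sqrt w$ and $\sqrt{\sum_j t_j} \le \sum_j \sqrt{t_j}$ convert each $M_j^{-2^{-(j-1)}}$ to $M_j^{-2^{-j}}$ and send the $\textbf{E}_Q$-exponent from $1/2^{k-1}$ to $1/2^k$. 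Since $\sqrt 5$ and $2\sqrt 2$ are both $<4$, the induction closes with the claimed constant $4$.

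I expect no conceptual obstacle: the proof is a van der Corput iteration, and the only substantive technical step is the Jensen/Hölder move that commutes the $(h_{1,0},h_{1,1})$-average past the fractional exponent $1/2^{k-1}$. The remaining work is bookkeeping --- verifying the shift-of-shift identity and carrying the constants through the repeated square-rootings --- and neither poses serious difficulty.
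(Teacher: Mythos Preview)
Your proof is correct and is the same van der Corput iteration of Lemma~\ref{lem: BasicIneq} as in the paper, just organized dually. The paper tracks $|\mathbb{E}_q(a)|^{2^\ell}$ upward, proving by induction that
\[
|\mathbb{E}_q(a)|^{2^\ell}\leq g_\ell \sum_{i=1}^\ell M_i^{-2^{\ell-i}} + b_\ell\,\textbf{E}_{Q_\ell}^{(1)}(a_{(M_1,\ldots,M_\ell)})
\]
with explicit constants $g_\ell,b_\ell$; at each step it squares (Cauchy--Schwarz sends $\textbf{E}^{(1)}$ to $\textbf{E}^{(2)}$) and then applies Lemma~\ref{lem: BasicIneq} termwise, only taking the $2^k$-th root at the very end. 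You instead take square roots at every stage and use Jensen for the concave map $x\mapsto x^{1/2^{k-1}}$ to pull the fractional exponent through the $(h_{1,0},h_{1,1})$-average. These are two bookkeeping schemes for the same chain of inequalities; your version has the minor advantage that you never need to track the growing constants $c_\ell=\prod_{j\le\ell} j^{2^{\ell-j}}$. One small point worth making explicit in your write-up: after the Cauchy step inside Lemma~\ref{lem: BasicIneq} the $q_1$-component $a_{q_1}$ is genuinely \emph{dropped} (not merely bounded by $1$), so the function $b_{h_{1,0},h_{1,1}}$ to which you apply the inductive hypothesis should be built from $a_{q'}$ rather than the full $a_q$; this is what guarantees it is $q'$-periodic with the required multiplicative splitting over $q_2,\ldots,q_k,Q$, and hence that the induction is legitimate.
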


This is very similar to \cite[Lemma 3.1]{GR} though we include the argument here for completeness.

\begin{proof} [Proof of Proposition \ref{prop: First thm}]
Let $Q_j=q_{j+1}\cdots q_k\cdot Q$ so that $Q_k=Q$ and $q=q_1\cdots q_jQ_j$.

Applying Lemma \ref{lem: BasicIneq} to $q=Q_0=q_1Q_1$ we obtain
\[
|\mathbb E_q(a)|^2\leq \frac 5{M_1}  + 2 \textbf{E}_{Q_1}^{(1)} (a_{(M_1) } )  .
\]
 We Cauchy to obtain
 \[
|\mathbb E_q(a)|^4\leq \frac {2\cdot 5^2}{M_1^2} 
+ 8 \textbf{E}_{Q_1}^{(2)} (a_{(M_1) } )  .
\]
We now apply  Lemma \ref{lem: BasicIneq} to $Q_1=q_2Q_2$, so that
\[
\bigg|\mathbb E_{Q_1}\bigg(a_{\big[\substack{h_{1,0}    \\ h_{1,1} }\big]} \bigg)\bigg|^2
\leq \frac 5{M_2} 
+  \frac 2{M_2^2} \sum_{1\leq h_{2,0} \ne h_{2,1} \leq M_2}  \bigg|\mathbb E_{Q_2}\bigg(a_{\big[\substack{h_{1,0}, h_{2,0}    \\ h_{1,1}, h_{2,1} }\big]} \bigg)\bigg|  ,
\]
so that
 \[
|\mathbb E_q(a)|^4\leq \frac {2\cdot 5^2}{M_1^2} +  \frac {2^3\cdot 5}{M_2}
+ 2^4  \textbf{E}_{Q_2}^{(1)} (a_{(M_1,M_2) } ).
\]
We now prove that for each $\ell\ge 1$ we have, for $c_\ell=\prod_{j=2}^\ell j^{2^{\ell-j}}=\ell c_{\ell-1}^2$
with $g_\ell=5^{2^{\ell-1}}c_\ell, b_\ell = 2^{2^\ell-1}c_\ell$,
\[
|\mathbb E_q(a)|^{2^\ell}\leq g_\ell   \sum_{i=1}^\ell M_i^{-2^{\ell-i}}  
+ b_\ell   \textbf{E}_{Q_\ell}^{(1)} (a_{(M_1,\cdots, M_\ell) } )
\]
by induction.
Now this holds for $\ell=1$. Cauchying gives
\[
|\mathbb E_q(a)|^{2^{\ell+1}}\leq   g_{\ell+1} \sum_{i=1}^\ell M_i^{-2^{\ell+1-i}}  
+ b_{\ell+1}  \textbf{E}_{Q_\ell}^{(2)} (a_{(M_1,\cdots, M_\ell) } ) .
\]
We now apply  Lemma \ref{lem: BasicIneq} to $Q_\ell=q_{\ell+1}Q_{\ell+1}$, so that
\[
\bigg| E_{Q_\ell}\bigg(  a_{\big[\substack{h_{1,0},\dots, h_{\ell,0}  \\ h_{1,1}, \dots, h_{\ell,1}}\big]}\bigg) \bigg|^2
\leq \frac 5{M_{\ell+1}} 
+  \frac 2{M_{\ell+1}^2} \sum_{1\leq h_{{\ell+1},0} \ne h_{{\ell+1},1} \leq M_{\ell+1}}  \bigg|\mathbb E_{Q_{\ell+1}}\bigg(f_{\big[\substack{h_{1,0},\dots, h_{\ell+1,0}  \\ h_{1,1}, \dots, h_{\ell+1,1}}\big]} \bigg)\bigg| 
\]
and therefore, as $5b_{\ell+1}/2\leq g_{\ell+1} $
\[
|\mathbb E_q(a)|^{2^{\ell+1}}\leq g_{\ell+1}  \sum_{i=1}^{\ell+1} M_i^{-2^{\ell+1-i}}  
+ b_{\ell+1}    \textbf{E}_{Q_{\ell+1}}^{(2)} (a_{(M_1,\cdots, M_{\ell+1}) } ).
\]
This proves the induction. Taking $\ell=k$, and then $2^k$th roots we have
\[
|\mathbb E_q(a)| \leq g_k^{1/2^k}   \sum_{i=1}^k M_i^{-2^{-i}}  
+ b_k^{1/2^k}   \textbf{E}_{Q}^{(1)} (a_{(M_1,\cdots, M_k) } )^{1/2^k}.
\]
Now
$\log c_k^{1/2^k} = \sum_{j=2}^k  2^{-j} \log j \leq \kappa:=  \sum_{j\geq 2}  2^{-j} \log j$, and so
$b_k^{1/2^k}< g_k^{1/2^k} < \sqrt{5} e^\kappa =3.715647213\dots <4$, so the result follows.
\end{proof}

\subsection{Sums in an interval}

\begin{proof} [Proof of Lemma \ref{Lem: BdsInInterval}]
Let $N_q$ be the least residue of $N \pmod q$ so that $0\leq N_q<q$.
Then $I$ may be partitioned into $\lfloor \frac Nq\rfloor$ intervals of length $q$, each yielding a complete sum by periodicity, and one left over interval $I_q$ of length $N_q$, so that 
\[
\sum_{n\in I} a_q(n)    = \sum_{n\in I_q} a_q(n)   + \bigg\lfloor \frac Nq\bigg\rfloor \sum_{r \pmod q} a_q(r)   
\]
and therefore
 \[
\frac 1N \bigg|  \sum_{n\in I} a_q(n) \bigg|  \leq    \bigg|  \frac{1}q \sum_{r \pmod q} a_q(r)     \bigg|  +\frac 1N \bigg|  \sum_{n\in I_q} a_q(n) \bigg| .
\]

Now if $q=\prod_p p^{e_p}$ then $a_q=\prod_p a_{p^{e_p}}$ and so, as we saw above,
 \[
 \bigg|  \frac{1}q \sum_{r \pmod q} a_q(r)     \bigg|  =    \prod_{ p^e\| q} \bigg|  \frac{1}{p^e} \sum_{r \pmod {p^e}} a_{p^e}(r)     \bigg| .
\]
For the incomplete sum we use  Fourier analysis:
\begin{align*}
\sum_{n\in I_q} a_q(n) &= \sum_{r \pmod q}  a_q(r) \sum_{n\in I_q} \frac 1q \sum_{b \pmod q}  e\bigg( \frac{(n-r)b}q\bigg)\\
 & = \frac 1q \sum_{b \pmod q}\bigg(  \sum_{r \pmod q} a_q(r)   e\bigg( \frac{-br}q\bigg)  \bigg)  \bigg( \sum_{n\in I_q}  e\bigg( \frac{bn}q\bigg) \bigg)\\
   &\ll \frac 1q \sum_{-\frac q2<b\leq \frac q2}\bigg|  \sum_{r \pmod q}a_q(r)  e\bigg( \frac{-br}q\bigg)  \bigg|
  \min \bigg\{ \frac q{|b|} , N_q\bigg\}   \\
  &\ll   \log (N_q+2) \cdot \max_{b \pmod q} \bigg|  \sum_{r \pmod q}a_q(r)   e\bigg( \frac{br}q\bigg)  \bigg|.
\end{align*}
Therefore using the Chinese Remainder Theorem we obtain
\[
\frac 1N \bigg|  \sum_{n\in I_q} a_q(n) \bigg|   \ll \frac {q \log q}N \cdot \  \prod_{ p^e\| q} \ \max_{b \pmod {p^e} }  \bigg|  \frac{1}{p^e} \sum_{r \pmod {p^e}} a_{p^e}(r)     
e\bigg( \frac{br}{p^e}\bigg) \bigg| .
\]
Combining these two bounds we obtain \eqref{eq: CRT}.
\end{proof}



\section{Putting it all together} \label{sec: Together}

 If $q=Qr$ with $(Q,r)=1$ then there exist integers $a,b$ for which $aQ+br=1$.
We can write $\chi_q=\chi_Q\chi_r$ where $\chi_Q=\chi_q^{br}$ is a character mod $Q$ and $\chi_r=\chi_q^{aQ}$ is a character mod $r$ and they are both primitive characters if $\chi_q$ is. Moreover
$e(\frac mq) = e(\frac {ma }r)e(\frac {mb}Q) $, so that $a_q=a_Qa_r$ where 
$a_Q(n)=\chi_Q(f(n)) e(\frac{bg(n)}Q)$ and $a_r(n)=\chi_r(f(n)) e(\frac{ag(n)}r)$.

\begin{theorem} \label{thm: Almost there}
Suppose that $f(x),g(x)\in \mathbb Q(x)$ and $f(x)$ and $g(x)$ are not both constants.
Given $\epsilon>0$ and integer $k\geq 1$, select $\eta=\epsilon/2^{k+3}D$  with $D= 2^{k+1}(\deg f+\deg g)$.
Suppose $N$ and $q$ satisfy  $q=q_1q_2\cdots q_kQ$ where the $q_i$ and $Q$ are pairwise coprime,  
each $q_j\leq N^{1-\epsilon}$,  $(Q,\Delta(f,g,k))=1$ and $k<N^\eta$.
 Let $\chi$ be a character mod $q$, and define $\chi_Q$ as above.
 Let $Q^*=Q$ unless $g$ is a polynomial of degree $\leq k+1$ in which case
$\chi_Q^{r_f}$ is induced from a primitive character of conductor $Q^*$.
  If $Q<N$ or if   $Q$ is prime and $N\leq Q<N^{2-\epsilon}$,   then 
 \[
\bigg| \frac 1N \sum_{n\in I} \chi(f(n)) e\bigg( \frac{ g(n)}q  \bigg)  \bigg| \ll    (Q^*)^{-\eta},
\]
provided $k\cdot 2^k\ll \log\log Q^*$,  and $Q^*\geq (\log Q)^{12D}$.
\end{theorem}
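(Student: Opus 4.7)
The plan is to apply Proposition \ref{prop: First thm} to $a_q(n) = \chi(f(n)) e(g(n)/q)$, which gives
\begin{equation*}
|\mathbb E_q(a)| \leq 4\sum_{i=1}^k M_i^{-2^{-i}} + 4\,\textbf{E}_Q(a_{(M_1,\ldots,M_k)})^{1/2^k}.
\end{equation*}
Since $M_i \geq N^{2\epsilon/3}$ (because $q_i \leq N^{1-\epsilon}$), the first sum is at most $k N^{-2\epsilon/(3\cdot 2^k)}$, which is well within $(Q^*)^{-\eta}$ in both ranges of $Q$ allowed by the hypotheses. The main work is to bound the second term.

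The key observation is that the $q_i$-factors cancel from $a_{[\cdots]}$: since $a_{q_i}$ is $q_i$-periodic, the shift by $h_{i,j_i}q_i$ is invisible to it, and since $j_i=0$ and $j_i=1$ appear with opposite signs $\sigma(j)$, the corresponding $a_{q_i}$ and $\overline{a_{q_i}}$ terms pair up and cancel. Hence for each tuple $\mathbf{h}$,
\begin{equation*}
a_{[\cdots]}(n) = \chi_Q(f^*(n)) \, e\bigl(c\,g^+(n)/Q\bigr)
\end{equation*}
for some $c$ coprime with $Q$, where $f^*, g^+$ are exactly the difference constructions fed into Proposition \ref{prop: vp rev}. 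I then apply Lemma \ref{Lem: BdsInInterval} to reduce $|\mathbb E_Q(a_Q^{[\cdots]})|$ (times the allowable $(1+Q\log Q/N)$ loss) to a product over $p^m \| Q$ of maximally twisted complete sums $\max_b\bigl|p^{-m}\sum_r \chi_{p^m}(f^*(r))e((cg^+(r)+br)/p^m)\bigr|$. Proposition \ref{prop: vp rev} bounds each factor by $2^k D\,p^{-(m^*-v_p(\mathbf{h}))/(2^{k+2}D)}$, with $m^* = m$ generically and $m^*$ equal to the $p$-part of the conductor of $\chi_Q^{r_f}$ in the exceptional polynomial case. Multiplying over $p\mid Q$ produces $(Q^*)^{-1/(2^{k+2}D)}$ accompanied by the factor $|\mathbf{h}|^{1/(2^{k+2}D)}$; averaging the latter over $1\leq h_{i,0} \neq h_{i,1} \leq M_i$ for each $i$ contributes at most $\prod_i(M_iq_i)^{1/(2^{k+2}D)} \leq N^{k/(2^{k+2}D)}$, harmless after the $2^k$-th root. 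Using the assumptions $k\cdot 2^k \ll \log\log Q^*$ and $Q^* \geq (\log Q)^{12D}$ to absorb the minor factors $(2^kD)^{\omega(Q)/2^k}$ and $(\log Q)^{O(1)/2^k}$ delivers the claimed bound $(Q^*)^{-\eta}$, since $\eta = \epsilon/(2^{k+3}D)$ leaves ample slack against the raw exponent $1/(2^{2k+2}D)$ obtained above.

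The subtle range is $Q$ prime with $N \leq Q < N^{2-\epsilon}$, where Lemma \ref{Lem: BdsInInterval} contributes a would-be-fatal $(Q\log Q)/N$ factor. The rescue is that $|h_{i,1}-h_{i,0}| \leq M_i < N \leq Q$ forces $v_Q(\mathbf{h})=0$ automatically, so Proposition \ref{prop: vp rev} at $m=1$ yields genuine square-root cancellation $\ll D\,Q^{-1/2}$; the combined bound $(Q\log Q/N)\cdot Q^{-1/2} \ll N^{-\epsilon/2+o(1)}$ (using $Q \leq N^{2-\epsilon}$) once again fits inside $(Q^*)^{-\eta}$. I expect the main obstacles to lie in the bookkeeping of the exceptional polynomial case of Proposition \ref{prop: vp rev}, where the replacement of $m$ by the $\chi_Q^{r_f}$-conductor forces the switch from $Q$ to $Q^*$ uniformly across primes, and in verifying that the hypotheses $(Q,\Delta(f,g,k))=1$ and $k<N^\eta$ are strong enough to ensure Proposition \ref{prop: vp rev} applies for every differencing tuple $\mathbf{h}$ and every Fourier twist $b$ that arises.
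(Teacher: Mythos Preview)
Your overall strategy matches the paper's: apply Proposition~\ref{prop: First thm}, complete the inner sums via Lemma~\ref{Lem: BdsInInterval}, and bound the resulting complete sums using Proposition~\ref{prop: vp rev}. Your treatment of the prime range $N\le Q<N^{2-\epsilon}$ is also correct: indeed $|h_{i,1}-h_{i,0}|<M_i<N\le Q$ forces $v_Q(\mathbf{h})=0$, so the full square-root saving is available.

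The gap is in the case $Q<N$. You write that multiplying over $p\mid Q$ gives $(Q^*)^{-1/(2^{k+2}D)}$ times $|\mathbf{h}|^{1/(2^{k+2}D)}$, and then bound the $\mathbf{h}$-average of the latter pointwise by $\prod_i(M_iq_i)^{1/(2^{k+2}D)}\le N^{k/(2^{k+2}D)}$, calling this ``harmless after the $2^k$-th root''. It is not: after the $2^k$-th root the loss is $N^{k/(2^{2k+2}D)}$, and since $Q^*\le Q<N$ this already for $k=1$ exceeds the saving $(Q^*)^{-1/(2^{2k+2}D)}$, so the net bound is $\ge 1$. A uniform pointwise bound on $\prod_p p^{v_p(\mathbf{h})}$ (by $\Pi(\mathbf{h})$ or by $\prod_i M_i$) is simply too crude here.

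What the paper does instead is truncate each local factor using the trivial bound $\le 1$, obtaining
\[
\prod_{p^{s_p}\| Q^*} p^{-\max\{0,\,s_p-v_p(\mathbf{h})\}/(4D)}
=(Q^*)^{-1/(4D)}\,(Q^*,\Pi(\mathbf{h}))^{1/(4D)},
\]
where $\Pi(\mathbf{h})=\prod_i(h_{i,1}-h_{i,0})$, and then average the \emph{gcd} over $\mathbf{h}$ rather than bound it pointwise. By Lemma~\ref{lem: counts},
\[
\frac{1}{M_1^2\cdots M_k^2}\sum_{\mathbf{h}}(Q^*,\Pi(\mathbf{h}))^{1/(4D)}
\le \sum_{u\mid Q^*} u^{1/(4D)}\,\frac{\tau_k(u)}{u}
\le \prod_{p\mid Q^*}\Bigl(1-p^{-(1-1/(4D))}\Bigr)^{-k}
< 3^{k\,\omega(Q^*)},
\]
and it is precisely this factor, together with $D^{\omega(Q^*)}$, that the hypothesis $k\cdot 2^k\ll\log\log Q^*$ is designed to absorb. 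The summability of $u^{1/(4D)-1}$ is the point: most $\mathbf{h}$ have small gcd with $Q^*$, so the average is essentially $O(1)$ rather than of size $N^{k/(4D)}$.
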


To establish Theorem \ref{thm: main} we will need to factor $q=q_1q_2\cdots q_kQ$ with $Q$ as in the hypothesis of 
  Theorem \ref{thm: Almost there} so as to apply this result.
We begin this section with a technical lemma.

 
\begin{lemma} \label{lem: counts} For any positive integers $d$ and $M$
\[
 \sum_{\substack{1\leq h_{0} \ne h_{1} \leq M \\ d| h_0-h_1 }} 1 < \frac{M^2}d.
\]
\end{lemma}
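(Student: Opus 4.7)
The plan is an elementary counting argument, grouping the integers in $[1,M]$ by their residue class mod $d$.

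First, I would write $M = qd + s$ with $0 \le s < d$. Then exactly $s$ of the $d$ residue classes mod $d$ contain $q+1$ integers from $[1,M]$, while the remaining $d-s$ classes contain $q$ integers each. A pair $(h_0,h_1)$ with $h_0 \ne h_1$ satisfying $d \mid h_0-h_1$ is precisely an ordered pair of distinct elements lying in the same residue class, so
\[
\sum_{\substack{1 \le h_0 \ne h_1 \le M \\ d \mid h_0 - h_1}} 1 \;=\; s(q+1)q + (d-s)q(q-1) \;=\; q(dq + 2s - d).
\]

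Second, I would compare this to $M^2/d = (qd+s)^2/d = q^2 d + 2qs + s^2/d$. A direct subtraction gives
\[
\frac{M^2}{d} - q(dq + 2s - d) \;=\; qd + \frac{s^2}{d},
\]
which is strictly positive as soon as $M \ge 1$ (since then at least one of $q$ or $s$ is positive). This yields the desired strict inequality.

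There is no real obstacle here; the only minor subtlety is handling the degenerate case $d > M$, where $q=0$, $s=M$, and the sum is empty while $M^2/d > 0$, so the inequality holds trivially. One could alternatively phrase the argument via Cauchy--Schwarz on $\sum_r n_r^2 \ge M^2/d$ and then subtract the diagonal $M$, but the direct computation above is cleaner and makes the strict inequality transparent.
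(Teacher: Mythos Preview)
Your proof is correct and is essentially identical to the paper's own proof: both write $M=qd+s$ (the paper uses $u,v$), count pairs within each residue class to get $s(q+1)q+(d-s)q(q-1)$, and verify that $M^2/d$ exceeds this by the positive quantity $qd+s^2/d$.
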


\begin{proof}
Now if $M= ud+v$ where $0\leq v<d$ then 
 $\#\{ 1\leq h\leq M: h\equiv b \pmod {d}\} = u+1$ for $1\leq b\leq v$ and $=u$ otherwise,
 so that 
 \[
  \frac 1{M^2} \sum_{\substack{1\leq h_{0} \ne h_{1} \leq M   \\ d |  h_0-h_1}}  1
  =   \frac{v\cdot u(u+1) + (d-v) \cdot u(u-1)}{M^2} =
  \frac 1{d} - \frac{v^2+ud ^2}{d(ud+v)^2} < \frac 1{d} .\qedhere
 \]
\end{proof}

\begin{proof} [Proof of Theorem \ref{thm: Almost there}]
We begin by applying Proposition \ref{prop: First thm} with 
\[
a_q(n)= \chi_q(f(n)) e\bigg(\frac{g(n)}q\bigg)
\]
 where $\chi_q$ is a given primitive character mod $q$.

We have
\[
(a_q)_{\big[\substack{h_{1,0},\dots, h_{k,0}  \\ h_{1,1},\dots, h_{k,1}}\big]}(n) = 
 \chi_q(f^*(n)) e\bigg(\frac{g^+(n)}q\bigg)
\]
 where $f^*=f^*_{\big[\substack{h_{1,0}, \dots, h_{k,0}  \\ h_{1,1} ,\dots, h_{k,1}}\big]}$ and $g^+=g^+_{\big[\substack{h_{1,0}, \dots, h_{k,0}  \\ h_{1,1} ,\dots, h_{k,1}}\big]}$. Therefore the  $\textbf{E}_{Q}  (a_{(M_1,\cdots, M_k) } )$ of Corollary \ref{cor: First thm} is
\[
\frac 1{M_1^2\cdots M_k^2} \sum_{\substack{1\leq h_{j,0} \ne h_{j,1} \leq M_j \\ \text{for } 1\leq j\leq k}}
\bigg| \frac 1N \sum_{n\in I}   \chi_Q\bigg(f^*_{\big[\substack{h_{1,0},\dots, h_{k,0}  \\ h_{1,1},\dots, h_{k,1}}\big]}(n)\bigg) e\Bigg(\frac{g^+_{\big[\substack{h_{1,0},\dots, h_{k,0}  \\ h_{1,1},\dots, h_{k,1}}\big]}(n)}Q\Bigg)\bigg|
\]
 and then by Lemma \ref{Lem: BdsInInterval} this is
 \begin{equation} \label{eq: ComboBound}
\ll \bigg( 1+ \frac {Q \log Q}N\bigg)  \frac 1{M_1^2\cdots M_k^2} \sum_{\substack{1\leq h_{j,0} \ne h_{j,1} \leq M_j \\ \text{for } 1\leq j\leq k}}  \prod_{ p^e\| Q} \ \max_{b \text{ mod }{p^e} }  \bigg|  \frac{1}{p^e} \sum_{r \pmod {p^e}} 
  \chi_{p^e} (f^*(n)) e\bigg(\frac{g^+(n)+bn}{p^e} \bigg) \bigg| .
 \end{equation}
Now if $g$ is a polynomial of degree $\leq k+1$  and $\chi_{p^e}^{r_f}$ is principal then we will only apply the trivial bound $\leq 1$ for the 
$p^e$-th term in the product over $p^e\| Q$ in \eqref{eq: ComboBound}.

Suppose that   $N\leq Q\leq N^{2-\epsilon}$ with $Q=p$ prime and assume that if $g$ is a polynomial of degree $\leq k+1$ then $\chi_{p}^{r_f}$ is not principal. Then \eqref{eq: ComboBound} is, by the first part of Proposition \ref{prop: vp rev}, 
 \[
\ll   \frac {Q \log Q}N   \frac 1{M_1^2\cdots M_k^2} \sum_{\substack{1\leq h_{j,0} \ne h_{j,1} \leq M_j \\ \text{for } 1\leq j\leq k}}   2^{k+1}DQ^{-\frac{\max\{ 0,1-v_p(\mathbf{h})\} }2},
 \]
 and $Q^{- \max\{ 0,1-v_p(\mathbf{h})\} }= Q^{-1}Q^{ \min\{ 1,v_p(\mathbf{h})\} }
 \leq Q^{-1}  \prod_{j=1}^k (h_{j,0} - h_{j,1},p)$. Therefore \eqref{eq: ComboBound} is, by Lemma \ref{lem: counts},
 \[
\ll   \frac {Q^{\frac 12+o(1)} }N  \prod_{j=1}^k  \frac 1{M_j^2} \sum_{\substack{1\leq h_{j,0} \ne h_{j,1} \leq M_j \\ \text{for } 1\leq j\leq k}}   (h_{j,0} - h_{j,1},p)^{\frac{1}2} \leq \frac {Q^{\frac 12+o(1)} }N \bigg( 1 +\frac 1{p^{1/2}}\bigg)^k \leq \frac {Q^{\frac 12+o(1)} }N \ll Q^{-\epsilon/2}.
 \]
 By Corollary \ref{cor: First thm} we deduce that
 \begin{equation} \label{eq: BoundWrittenOut}
\bigg| \frac 1N \sum_{n\in I} \chi(f(n)) e\bigg( \frac{ g(n)}q  \bigg)  \bigg| \ll    kN^{-\frac{\epsilon}{2^k}}+\left( Q^{-\epsilon/2}\right)^{\frac{1}{2^k}}\ll   Q^{-\eta}
 \end{equation}
 since $ kN^{-\frac{\epsilon}{2^k}}< N^{\eta-\frac{\epsilon}{2^k}}< N^{-2\eta}< Q^{-\eta}$ as $Q<N^2$.

Now suppose that $Q\leq N$ and $p^{e_p}\|Q$. Then $Q^*=\prod_{p|Q, s_p\geq 1} p^{s_p}$ where
 $s_p=e_p$ unless $g$ is a polynomial of degree $\leq k+1$  and $\chi_{p^{e_p}}^{r_f}$ is induced from a primitive character of conductor $p^{\ell_p}$   in which case  $s_p=\ell_p$.  
 The  second estimate of  Proposition \ref{prop: vp rev} implies that \eqref{eq: ComboBound} is 
\[
\ll \bigg( 1+ \frac {Q \log Q}N\bigg)  \frac 1{M_1^2\cdots M_k^2} \sum_{\substack{1\leq h_{j,0} \ne h_{j,1} \leq M_j \\ \text{for } 1\leq j\leq k  }}  
\prod_{p^{s_p}\| Q^* } D\, p^{- \frac { \max\{ 0,s_p-v_p(\mathbf{h)\} }}{4D}  } ,
\]    
 Now $\prod_{p\| Q^*}  D=D^{\omega(Q^*)} $. Writing $\Pi(\mathbf{h}):= \prod_{i=1}^k ( h_{i,1}-h_{i,0})$, we have
\[
 ( Q^*, \Pi(\mathbf{h} )) =  \prod_{ p^{s_p}\| Q^*  }  p^{\min\{ s_p,v_p(\mathbf{h)\} }}
= Q^*  \prod_{ p^{s_p}\| Q^*  }  p^{-\max\{ 0,s_p-v_p(\mathbf{h)\} } },
\]
and so  \eqref{eq: ComboBound} is
 \[
\ll D^{\omega(Q^*)}  \bigg( 1+ \frac {Q\log Q}N\bigg) (Q^*)^{-\frac 1{4D}} \frac 1{M_1^2\cdots M_k^2} \sum_{\substack{1\leq h_{j,0} \ne h_{j,1} \leq M_j \\ \text{for } 1\leq j\leq k  }}   ( Q^*,  \Pi(\mathbf{h}) )^{\frac 1{4D}} .
\]   
Now if $( Q^*,  \Pi(\mathbf{h}) )=u$   then $u|Q^*, u|\Pi(\mathbf{h}) $ and so
\[
\frac 1{M_1^2\cdots M_k^2} \sum_{\substack{1\leq h_{j,0} \ne h_{j,1} \leq M_j \\ \text{for } 1\leq j\leq k  }}   ( Q^*,  \Pi(\mathbf{h}) )^{\frac 1{4D}}  \leq \sum_{u|Q^*} u^{\frac 1{4D}}
 \frac 1{M_1^2\cdots M_k^2} \sum_{\substack{1\leq h_{j,0} \ne h_{j,1} \leq M_j \\ \text{for } 1\leq j\leq k\\ u| \Pi(\mathbf{h})  }} 1.
\]
This last sum is
\[
\sum_{d_1\cdots d_k=u}  \prod_{j=1}^k \frac 1{M_j^2} \sum_{\substack{1\leq h_{j,0} \ne h_{j,1} \leq M_j \\ d_j|  h_{j,1}-h_{j,0}  }} 1
< \sum_{d_1\cdots d_k=u} \frac 1{d_1\cdots d_k}= \frac{\tau_k(u)}{u}.
\]
by Lemma \ref{lem: counts}, and 
\[
\sum_{u|Q^*} u^{\frac 1{4D}}  \frac{\tau_k(u)}{u} \leq \prod_{p|Q^*}  \bigg( 1 - \frac 1{p^{1-\frac 1{4D} }}\bigg)^{-k}  <
3^{k\, \omega(Q^*)} 
\]
so 
 \[
\textbf{E}_{Q}  (a_{(M_1,\cdots, M_k) } ) \ll (3^kD)^{ \omega(Q^*)}   \bigg( 1+ \frac {Q\log Q}N\bigg) (Q^*)^{-\frac {1}{4D}} .
\] 
Now $Q<N$ and  $\log Q \ll (Q^*)^{\frac {1}{12D}} $ by the hypothesis so that $1+ \frac {Q\log Q}N\ll  (Q^*)^{\frac {1}{12D}} $.
Also  $ (3^kD)^{ \omega(Q^*)} \leq (Q^*)^{  c\frac{\log 3^kD}{\log\log Q^*}}   \ll (Q^*)^{\frac {1}{12D}} $ 
since  $ \omega(Q^*)\ll \frac{\log Q^*}{\log\log Q^*}$  and as $k\cdot 2^k\ll \log\log Q^*$ by the hypothesis. Therefore
\[
\textbf{E}_{Q}  (a_{(M_1,\cdots, M_k) } ) \ll  (Q^*)^{-\frac {1}{12D}} .
\] 

By Corollary \ref{cor: First thm} we deduce that
 \begin{equation} \label{eq: BoundWrittenOut}
\bigg| \frac 1N \sum_{n\in I} \chi(f(n)) e\bigg( \frac{ g(n)}q  \bigg)  \bigg| \ll    kN^{-\frac{\epsilon}{2^k}}+(Q^*)^{-\frac {1}{2^{k}\cdot 12 D}} \ll (Q^*)^{-\eta}
 \end{equation}
 since $ kN^{-\frac{\epsilon}{2^k}}< N^{\eta-\frac{\epsilon}{2^k}}< N^{-\eta}< (Q^*)^{-\eta}$ as $Q^*<Q<N$. 
   \end{proof}

With this established we can prove our main theorem:

\begin{theorem} \label{thm: mainY}  Fix $\delta,\epsilon>0$. Suppose that $f(x), g(x)\in \mathbb Q(x)$ where $f(x)$ and $g(x)$ are not both constants.
There exists a constant $\eta=\eta(f,g,\delta,\epsilon)>0$ and an explicitly determinable  positive integer $\mathcal M=\mathcal M(f,g,\delta)$ such that if\\
 {\rm (a)} For any character $\chi$ mod $q$ where $q\in \mathcal N(y)$   and $y:=q^\delta$;\\
 and {\rm (b)} of Theorem \ref{thm: main} holds, 
then
 \begin{equation} \label{Key result M}
  \sum_{n\in I} \chi(f(n)) e\bigg( \frac{ g(n)}q  \bigg) \ll N / q_{\mathcal M}^{\eta}
 \end{equation}  
holds unless  $g(x)$ is  a polynomial of degree $\leq 2/\delta+1$ and $\chi^{r_f}$ is induced from a primitive character of conductor $q'$, in which case 
 \begin{equation} \label{Key result2 M}
  \sum_{n\in I} \chi(f(n)) e\bigg( \frac{ g(n)}q  \bigg) \ll N / (q'_{\mathcal M})^{\eta}
 \end{equation}  
 holds.  The implicit constants in \eqref{Key result M} and \eqref{Key result2 M} depend only on $f, g, \delta$ and $\epsilon$  (like $\eta$).
\end{theorem}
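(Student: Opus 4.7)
The plan is to adapt the proof of Theorem \ref{thm: Almost there}, keeping the van der Corput-style differencing machinery intact but relaxing the coprimality hypothesis $(Q,\Delta(f,g,k))=1$ by handling the primes dividing $\mathcal M$ with the trivial bound while retaining the cancellation on the primes coprime to $\mathcal M$.

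First, I factor $q=q_1q_2\cdots q_k\,Q$ where the $q_j$ are pairwise coprime prime power divisors of $q$, each of size $\le y\le N^{1/(1+\epsilon)}$, chosen so that either (i) $Q\le N$, or (ii) $Q=P$ is a single prime in $(y,y^2]$ with $N<P\le y^2\le N^{2/(1+\epsilon)}<N^{2-\epsilon'}$ for some $\epsilon'>0$ depending on $\epsilon$. Since $q\in\mathcal N(y)$ guarantees at most one prime factor in $(y,y^2]$ and all other prime power divisors $\le y$, such a factorization is always available, with $k$ a fixed constant depending only on $\delta$ and $\epsilon$. The point is that $Q$ now lies in one of the two ranges permitted by Theorem \ref{thm: Almost there}, while no constraint is placed on $\gcd(Q,\mathcal M)$. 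Applying Corollary \ref{cor: First thm} with $a_q(n)=\chi(f(n))\,e(g(n)/q)$ then reduces the task to bounding the average $\mathbf{E}_Q(a_{(M_1,\ldots,M_k)})$.

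Next, I apply Lemma \ref{Lem: BdsInInterval} to each inner complete sum mod $Q$, obtaining a product over prime powers $p^e\|Q$. For each $p\mid\mathcal M$ I invoke the trivial bound $1$; since $\omega(\mathcal M)=O_{f,g,\delta}(1)$, the net loss is absorbed into the implicit constant. For each $p\nmid\mathcal M$ I invoke Proposition \ref{prop: vp rev} to obtain a saving of the shape $p^{-\max\{0,s_p-v_p(\mathbf{h})\}/(4D)}$, where $s_p=e_p$ in the generic case and $s_p=\ell_p$ (the conductor exponent of $\chi^{r_f}$ at $p$) in the exceptional case when $g$ is a polynomial of low degree and $\chi_{p^{e_p}}^{r_f}$ is imprimitive. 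Writing $Q^*=\prod_{p\nmid\mathcal M}p^{s_p}$ and $\Pi(\mathbf{h})=\prod_i(h_{i,1}-h_{i,0})$, the product of savings at $\mathbf{h}$ combines to $(Q^*/(Q^*,\Pi(\mathbf{h})))^{-1/(4D)}$.

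Finally, averaging over $\mathbf{h}$ via Lemma \ref{lem: counts} exactly as in the proof of Theorem \ref{thm: Almost there} yields $\mathbf{E}_Q(a_{(M_1,\ldots,M_k)})\ll (Q^*)^{-1/(12D)}$, and hence $|\mathbb{E}_q(a)|\ll (Q^*)^{-\eta}$ after taking $2^k$-th roots, provided $k$ has been chosen so that the residual term $kN^{-\epsilon/2^k}$ from Corollary \ref{cor: First thm} is negligible (with the critical degree $k+1$ translating into the $2/\delta+1$ of the exceptional case in the theorem statement). In the generic case $Q^*$ coincides with $q_\mathcal M$ up to the peeled-off $q_j$'s (which affects only the implicit constant), yielding \eqref{Key result M}; in the exceptional case $Q^*$ coincides with $q'_\mathcal M$, yielding \eqref{Key result2 M}. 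The main obstacle is confirming that the trivial bound applied to the $\mathcal M$-part loses no cancellation of substance: since $\mathcal M$ is a fixed integer depending only on $f,g,\delta$, the count $\omega(\mathcal M)$ enters only through the implicit constant, and the secondary bookkeeping of matching the peeled-off $q_j$'s against $\mathcal M$-factors in $q$ is harmless because every prime power we peel off is $\le y=q^\delta$ and only finitely many such prime powers lie above primes of $\mathcal M$.
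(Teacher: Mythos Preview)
There is a genuine gap in the final step. Your bound is $(Q^*)^{-\eta}$, and you assert that $Q^*$ agrees with $q_{\mathcal M}$ (respectively $q'_{\mathcal M}$) ``up to the peeled-off $q_j$'s, which affects only the implicit constant.'' But the peeled-off factor $q_1\cdots q_k$ is not $O(1)$: each $q_j$ can be as large as $y=q^\delta$, and you need $k$ of order $1/\delta$ to force $Q\le N$, so $q_1\cdots q_k$ may be comparable to $q$ itself. Concretely, in the generic case $Q^*=Q_{\mathcal M}\ge Q/y^{\omega(\mathcal M)}\ge (N/y)\cdot y^{-\omega(\mathcal M)}\ge q^{\delta(\epsilon-\omega(\mathcal M))}$, and since $\omega(\mathcal M)\ge 1>\epsilon$ this gives no lower bound on $Q^*$ at all. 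The situation is worse in the exceptional case: there $Q^*=\prod_{p\mid Q,\,p\nmid\mathcal M}p^{\ell_p}$ is the part of $q'_{\mathcal M}$ supported on primes of $Q$, and if the prime support of $q'_{\mathcal M}$ happens to lie entirely among the $q_j$'s (which your factorization does nothing to prevent), then $Q^*=1$ and the bound is vacuous.

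The paper fixes this by reversing the order of the construction: it first determines $q'$ (equal to $q$ outside the exceptional case), then builds $Q$ as a specific divisor of $q'_{\mathcal M}$ (or as the single large prime of $q'$ if one exists) satisfying $Q>(q'_{\mathcal M})^{\delta/2}$, via an explicit three-case analysis. Only afterwards does it pack $q/Q$ into the $q_j$'s. This guarantees both $(Q,\mathcal M)=1$, so that Theorem~\ref{thm: Almost there} applies as a black box, and $Q^*=Q$, so that the saving $Q^{-\eta}$ is automatically at least $(q'_{\mathcal M})^{-\delta\eta/2}$. Your idea of bounding the $\mathcal M$-primes trivially inside the product is not wrong in spirit, but it cannot replace the careful choice of $Q$: the whole point is to make sure the conductor $q'_{\mathcal M}$ is visible in $Q$, and that requires building $Q$ from $q'_{\mathcal M}$ rather than letting it be whatever remains after peeling.
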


We will use classical Weyl-type arguments in the next section in the case that $g(x)$ is a polynomial of degree $\geq 1/\delta$ to extend the result in the second case, in order to obtain Theorem \ref{thm: main}.

 \begin{proof} [Proof of  Theorem \ref{thm: mainY}] An integer $k$ will be determined in the proof  and will be bounded in terms of $\delta$.
 If $g$ is a polynomial of degree $\leq k+1$ then suppose that 
$\chi^{r_f}$ is induced from a primitive character of conductor $q'$; otherwise let $q'=q$.
 We let $\mathcal M= \Delta(f,g,2/\delta)$; we will write $q=Qq_1q_2\cdots q_k$ with $k<2/\delta$ so that 
$\mathcal M$ is divisible by all of  the prime divisors of $\Delta(f,g,k)$ which will allow  us to apply Theorem \ref{thm: Almost there}. 

The value of $Q$ here will now be defined in three separate cases so that $(Q,\mathcal M)=1$ and $Q >(q'_{\mathcal M})^{\delta/2}$.
We will therefore be able to apply Theorem  \ref{thm: Almost there} with this value of $Q$, and we observe that $Q^*=Q$ in Theorem  \ref{thm: Almost there} because of how we constructed $q'$.

I)\ If $q'$ has a prime factor $p>y$ then let $Q=p$ (and this prime $p$ does not divide $\mathcal M$ as we can assume that $y$ is arbitrarily large, so that $Q_{\mathcal M}=Q$).    Here $Q>y=q^{\delta}\geq (q'_{\mathcal M})^{\delta}$.

II)\ If $q'_{\mathcal M}\leq y$ then let $Q=q'_{\mathcal M}$.

III) \ Otherwise write $q'_{\mathcal M}=\prod_p p^{e_p}$ where each $p^{e_p}\leq y$, and so $q'_{\mathcal M}$ has at least two prime power factors as $q'_{\mathcal M}>y$. 
Order the distinct prime power factors of $q'_{\mathcal M}$ as $p_1^{e_1} > p_2^{e_2}>  \dots$
We then let  $Q=p_1^{e_1} \cdots p_\ell^{e_\ell} $ where $p_\ell$ is chosen  maximally so that $Q\leq y$.
We claim that $y^{1/2}<Q\leq y$:   If $\ell=1$ then $p_1^{2e_1}>p_1^{e_1}p_2^{e_2} >y$ so that 
$Q=p_1^{e_1}>y^{1/2}$. If $\ell>1$ then $p_{\ell+1}^{e_{\ell+1}} < p_2^{e_2}<y^{1/2}$, else
$Q\geq p_1^{e_1}p_2^{e_2}>p_2^{2e_2}  >y$, and so
$Q=p_1^{e_1} \cdots p_{\ell+1}^{e_{\ell+1}}/p_{\ell+1}^{e_{\ell+1}}>y/y^{1/2}=y^{1/2}=q^{\delta/2}\geq (q'_{\mathcal M})^{\delta/2}$.

We now put  the prime power divisors of $q/Q$ in descending size order  and select $q_1$ to be the largest product of the first
few such prime power factors that is $\leq y$. We then select $q_2$ from the remaining $p^{e_p}$ in the same way and eventually
$q/Q=q_1q_2\cdots q_k$ where the $q_i$ and $Q$ are pairwise coprime, and
$q_1,\dots,q_{k-1}\in (y^{1/2},y]$ with $q_k\leq y$. 

Our inequalities for $Q$ and the $q_i$'s yield that
 $y^{k/2} <q'\leq y^{k+2}$ and so $k/2<1/\delta\leq k+2$, which implies that $1/\delta-2\leq k<2/\delta$.
 
 Now, since $Q=Q^*$, if $k\cdot 2^k\ll \log\log Q$ then 
  \[
\bigg| \frac 1N \sum_{n\in I} \chi(f(n)) e\bigg( \frac{ g(n)}q  \bigg)  \bigg| \ll    Q^{-\eta}\ll (q'_{\mathcal M})^{-\delta\eta/2}
\]
by  Theorem \ref{thm: Almost there}. Then \eqref{Key result M} and  \eqref{Key result2 M} follow by adjusting the value of $\eta$.
Now $k$ is fixed in the statement of the theorem and so if the inequality $k\cdot 2^k\ll \log\log Q$ fails then $Q$ is bounded, and then
\eqref{Key result M} and  \eqref{Key result2 M} follow trivially by adjusting the implicit constant.
 \end{proof}
  
 
  \section{Classical exponential sum}  \label{sec: Classics} 
  
 We use the following  the classical Weyl Sum estimate; see eg. Davenport \cite[Lemma 3.1]{davenport} or Montgomery \cite[Theorem 2]{montgomery}.

\begin{lemma} \label{weylsumlemma} Let $d\ge 2$ be an integer, and $\alpha_i \in \mathbb R$, $1 \le i \le d$. Suppose that 
$|\alpha_d - \frac{A}{Q}| \leq Q^{-2}$ for pairwise coprime integers $A$ and $Q>0$.  Then   for any $\varepsilon>0$ and  integer $N\geq 1$, 
\begin{equation}\label{weylsigma}
\Bigg| \sum_{n\leq N} e( \alpha_1n + \dots + \alpha_d n^d ) \Bigg| \ll_{\varepsilon,d}
 N^{1+\varepsilon} \left( \frac{1}{Q} + \frac{1}{N} + \frac{Q}{N^d} \right)^{\sigma}.
\end{equation}
We can take  $\sigma=\frac 1{d(d-1)}$ using the improvement of  Bourgain, Demeter and Guth \cite{bdg}  to Vinogradov's mean value theorem.
\end{lemma}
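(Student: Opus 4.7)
The plan is to obtain the bound via the Weyl--van der Corput differencing process, and then to explain how the exponent $\sigma$ improves when one plugs in Vinogradov's mean value theorem in the sharp form of Bourgain--Demeter--Guth. Write $P(x) = \alpha_1 x + \cdots + \alpha_d x^d$ and $S = \sum_{n\leq N} e(P(n))$. Squaring and shifting gives $|S|^2 \leq N + 2\sum_{1\leq h\leq N} |\sum_n e(\Delta_h P(n))|$, where $\Delta_h P(n) := P(n+h)-P(n)$ is a polynomial in $n$ of degree $d-1$ whose leading coefficient is $d\alpha_d h$. Iterating this Cauchy--Schwarz step $d-1$ times reduces matters to a linear exponential sum: one obtains
\[
|S|^{2^{d-1}} \ll N^{2^{d-1}-d}\sum_{|h_1|,\dots,|h_{d-1}|\leq N} \min\!\left(N,\ \bigl\|d!\,\alpha_d h_1\cdots h_{d-1}\bigr\|^{-1}\right),
\]
where $\|\cdot\|$ denotes the distance to the nearest integer. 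Feeding in the Diophantine hypothesis $|\alpha_d-A/Q|\leq Q^{-2}$ with $(A,Q)=1$ and a standard count of the number of $m:=d!\,h_1\cdots h_{d-1}$ with $\|\alpha_d m\|$ small (Dirichlet / divisor estimates) yields the classical Weyl bound with $\sigma=1/2^{d-1}$.

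To upgrade the exponent to $\sigma=1/d(d-1)$, I would run the argument only one step of Weyl differencing beyond the linear stage (or none, depending on formulation) and instead absorb the remaining reduction into Vinogradov's mean value theorem. Concretely, Hua's lemma (as sharpened by Vinogradov) bounds $\int_{[0,1]^d}|T(\boldsymbol\beta)|^{2s}\,d\boldsymbol\beta$, where $T(\boldsymbol\beta)=\sum_{n\leq N}e(\beta_1 n+\cdots+\beta_d n^d)$, by $N^{2s-d(d+1)/2+\varepsilon}$ whenever $s\geq d(d-1)/2$, which is precisely the endpoint delivered by the main theorem of Bourgain--Demeter--Guth~\cite{bdg}. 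An application of H\"older and the standard passage from moments to individual exponential sums (plus a major/minor arc dissection using the Diophantine approximation on $\alpha_d$) then converts this $L^{2s}$ bound into the pointwise estimate \eqref{weylsigma} with $\sigma=1/d(d-1)$.

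The main obstacle here is entirely bookkeeping rather than conceptual: one must keep track of the $N^\varepsilon$ factors coming from the divisor function when counting representations of $m=d!\,h_1\cdots h_{d-1}$ in the final stage, and one must verify that the minor--arc contribution is controlled by the hypothesis on $\alpha_d$ alone (the lower-order coefficients $\alpha_1,\dots,\alpha_{d-1}$ play no role, since Weyl differencing / the moment bound is uniform in them). Since this is a classical statement whose proof appears in Davenport~\cite{davenport} and Montgomery~\cite{montgomery} (for the exponent $1/2^{d-1}$), and in the Bourgain--Demeter--Guth paper~\cite{bdg} (for the sharp exponent $1/d(d-1)$), the intended use in this article is to cite it directly rather than redo the computation.
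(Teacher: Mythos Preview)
Your proposal is correct in spirit and aligns with the paper: the paper does not prove this lemma at all but simply cites it as a classical estimate (Davenport \cite[Lemma 3.1]{davenport}, Montgomery \cite[Theorem 2]{montgomery}, with the improved exponent from Bourgain--Demeter--Guth \cite{bdg}). Your sketch of the Weyl differencing for $\sigma=1/2^{d-1}$ and the Vinogradov mean value route for $\sigma=1/d(d-1)$ is a reasonable outline of how such a proof goes, and your closing remark that the intended use here is to cite the result directly is exactly what the paper does.
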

\begin{prop} \label{prop: Classic} Fix $\epsilon>0$, integer $d\geq 2$ and polynomial $g(x)\in \mathbb Z[x]$ of degree $d$.   There exists $\eta>0$, depending only on $\epsilon, g$ and $d$ such that for  any interval $I$ of length $N\ge q^{1/d+\epsilon}$  we have
\[
\bigg| \sum_{n\in I}   e\bigg( \frac{ g(n)}q  \bigg)  \bigg| \ll N/q^\eta.
\]
Here the $\ll$'s may depend   on $\epsilon, d$ and $g$.
\end{prop}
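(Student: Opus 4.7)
The plan is to reduce to the Weyl sum estimate (Lemma \ref{weylsumlemma}) applied to the leading coefficient. Write $g(x)=a_d x^d+a_{d-1}x^{d-1}+\cdots+a_0$ with $a_d\ne 0$. For an interval $I=(M,M+N]$, the substitution $n\mapsto M+n$ turns the sum into $\sum_{1\le n\le N}e(\tilde g(n)/q)$ where $\tilde g(n)=g(M+n)$ is again a polynomial in $n$ of degree $d$ with leading coefficient $a_d$, so the quantity $\alpha_d=a_d/q$ is unchanged. Put $\alpha_d=A/Q$ in lowest terms, so $Q=q/\gcd(a_d,q)$. Since $g$ (hence $a_d$) is fixed, $\gcd(a_d,q)\le |a_d|=O_g(1)$, giving $Q\gg_g q$.

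Next I invoke Lemma \ref{weylsumlemma} with $\sigma=1/(d(d-1))$. Since $|\alpha_d-A/Q|=0\le Q^{-2}$ trivially, it yields
\[
\Bigg|\sum_{n\in I}e\!\left(\frac{g(n)}{q}\right)\Bigg|\ll_{\varepsilon,d,g} N^{1+\varepsilon}\left(\frac1Q+\frac1N+\frac{Q}{N^d}\right)^{\sigma}.
\]
Using $Q\gg q$ and the hypothesis $N\ge q^{1/d+\epsilon}$ we bound each term individually: $1/Q\ll q^{-1}$, $1/N\le q^{-1/d-\epsilon}$, and $Q/N^d\ll q\cdot q^{-1-d\epsilon}=q^{-d\epsilon}$. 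Hence the parenthesis is $\ll q^{-\eta_0}$ where $\eta_0:=\min(1,\,1/d+\epsilon,\,d\epsilon)>0$ depends only on $d$ and $\epsilon$. Choosing $\varepsilon$ in Lemma \ref{weylsumlemma} sufficiently small (say $\varepsilon=\sigma\eta_0/4$) and noting that we may assume $N\le q$ (otherwise one splits into complete periods of length $q$ where the remaining incomplete piece has length $\le q$, and each complete period sum is handled identically by the same Weyl bound, giving the same saving per period), we conclude
\[
\Bigg|\sum_{n\in I}e\!\left(\frac{g(n)}{q}\right)\Bigg|\ll N\cdot N^{-1}\cdot N^{1+\varepsilon}q^{-\sigma\eta_0}\ll N\cdot q^{\varepsilon-\sigma\eta_0}\ll N/q^\eta
\]
with $\eta=\sigma\eta_0/2$.

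The only real subtlety is the control of $Q$. Because $g$ (and in particular its leading coefficient $a_d$) is treated as fixed throughout, $\gcd(a_d,q)$ is bounded independently of $q$, so the denominator $Q$ of $a_d/q$ in lowest terms is of size $\asymp q$; this is what makes each of the three Weyl terms give a genuine power saving. If one wanted $g$ to vary with $q$ (which is not what the statement asks for), this step would be the main obstacle and would require imposing a coprimality condition $\gcd(a_d,q)\le q^{1-\epsilon'}$ or similar. As stated, the proof is essentially a direct application of the Bourgain--Demeter--Guth-improved Weyl estimate.
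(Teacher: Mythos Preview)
Your proof is correct and follows essentially the same route as the paper: translate the interval to start at $0$, note that the leading coefficient $\alpha_d=a_d/q$ has denominator $Q\asymp q$ since $a_d$ is fixed, and apply the Weyl estimate (Lemma~\ref{weylsumlemma}) with the Bourgain--Demeter--Guth exponent. The only difference is in the reduction step: the paper first dissects $I$ into subintervals of the \emph{fixed} length $N_0=\lfloor q^{1/d+\epsilon/2}\rfloor$, which makes the three Weyl terms line up as $1/q<1/N_0<q/N_0^d\asymp q^{-d\epsilon/2}$ and sidesteps the need to worry about $N>q$; you instead reduce only to $N\le q$ and bound each Weyl term separately. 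Both work. Two small cosmetic points: your final display has a stray $N\cdot N^{-1}$ that should simply read $N^{1+\varepsilon}q^{-\sigma\eta_0}=N\cdot N^{\varepsilon}q^{-\sigma\eta_0}\le N\cdot q^{\varepsilon-\sigma\eta_0}$; and the parenthetical handling of $N>q$ is fine but deserves one extra clause noting that the leftover incomplete piece of length $<q$ is either $\ge q^{1/d+\epsilon}$ (so Weyl applies) or $<q^{1/d+\epsilon}$ (so it is trivially $\le q^{1-\eta}$), in either case $\ll N/q^{\eta}$ since $N>q$.
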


This is essentially ``best-possible'' (at least as formulated) since if we take $g(x)=x^d$ and $N=cq^{1/d}$ for some small $c>0$ then 
each $e ( \frac{ n^d}q  ) =1+O(c^d)$  if $n\leq N$ so that $\text{Re}(e ( \frac{ n^d}q  ))\gg 1$ and therefore
$\text{Re}( \sum_{n\leq N} e ( \frac{ n^d}q  ))\gg N$.

\begin{proof} 
We may assume that $\epsilon< 1/d^2$. 
Given $I$ we dissect $I$ into intervals $(M,M+N_0]$ of length $N_0:=\lfloor q^{1/d+\epsilon/2} \rfloor$ where $M$ is an integer, and use the trivial upper bound for the one part-interval. Provided $\epsilon\geq 2\eta$ we can obtain the result on each such sub-interval of length $N_0$ and add the subsequent upper bounds together to get the desired bound on $I$. Therefore henceforth we can take $N=N_0:=\lfloor q^{1/d+\epsilon/2} \rfloor$. In particular,
\begin{equation} \label{weyl3}
\frac 1q<\frac 1N<\frac q{N^d}\asymp q^{-\epsilon d/2}.
\end{equation}

For $g(x)=a_0+a_1x+\cdots+a_dx^d$ let $\alpha_1x + \dots + \alpha_d x^d=\frac{g(M+x)-g(M)}q$ so that $\alpha_d=\frac{a_d}q$, and we can write   $\frac{a_d}q=\frac AQ$ with $(A,Q)=1$ where $q\geq Q\geq q/a_d$ so that $Q\asymp q$. Therefore Lemma \ref{weylsumlemma} 
(with $\epsilon$ replaced by $\epsilon/2$) and \eqref{weyl3}  give
\[
\bigg| \sum_{M<n\leq M+N}   e\bigg( \frac{ g(n)}q  \bigg)  \bigg| \ll_{\varepsilon,d,g}
 N^{1+\varepsilon/2} \left( \frac{1}{q} + \frac{1}{N} + \frac{q}{N^d} \right)^{\sigma}
 \asymp  N\cdot \bigg( N^{1/2} q^{-1/2(d-1)}\bigg)^\epsilon\ll N/q^\eta
\]
provided $0<\eta < \epsilon/4d(d-1)$.
\end{proof}

\begin{theorem} \label{thm: mainZ}  Fix $\delta,\epsilon>0$. Suppose that $f(x)\in \mathbb Q(x)$ and $g(x)$ is  a polynomial of degree $D \geq 1/\delta$.
There exists a constant $\eta=\eta(f,g,\delta,\epsilon)>0$ and an explicitly determinable  positive integer $\mathcal M=\mathcal M(f,g,\delta)$ such that if \\
 {\rm (a)} For any character $\chi$ mod $q$ where $q\in \mathcal N(y)$ with  $q_{\mathcal M}>q^{1-\epsilon/(4D)}$ and $y:=q^\delta$;\\
 and {\rm (b)} of Theorem \ref{thm: mainY} hold,    then \eqref{Key result M} holds.
\end{theorem}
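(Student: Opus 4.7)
The plan is to apply Theorem \ref{thm: mainY} as the first step. If it yields \eqref{Key result M} directly, we are done. Otherwise we are in the exceptional case of Theorem \ref{thm: mainY}: $g$ is a polynomial of degree $D$ with $1/\delta \le D \le 2/\delta+1$, $\chi^{r_f}$ is induced from a primitive character $\psi$ of conductor $q'$, and we only have the weaker bound $|S|\ll N/(q'_{\mathcal M})^{\eta_1}$. Fix a threshold $\alpha=\alpha(\delta,\epsilon,D)>0$ to be chosen below. If $q'_{\mathcal M}\ge q_{\mathcal M}^{\alpha}$, this already gives \eqref{Key result M}, so it suffices to treat the case $q'_{\mathcal M}<q_{\mathcal M}^{\alpha}$, whence $q'<q_{\mathcal M}^{\alpha}\cdot(q/q_{\mathcal M})<q^{\alpha+\epsilon/(4D)}$ is itself very small.

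In this small-$q'$ regime I decompose $S$ by residues modulo $q'$ and appeal to the Weyl Sum Lemma \ref{weylsumlemma} underlying Proposition \ref{prop: Classic}. Writing $f=cF^{r_f}$, one has $\chi(f(n))=\chi(c)\,\psi(F(n))\,\mathbf{1}_{(F(n),q)=1}$, with $\psi(F(n))$ constant on each residue class $a\pmod{q'}$; only the coprimality condition $(F(n),q)=1$ mixes residues. To dispose of it, apply M\"obius inversion over divisors $d$ of $R''$, the product of primes of $q$ not dividing $q'$: for each $d\mid R''$ the constraint $d\mid F(n)$ cuts out at most $(\deg F)^{\omega(d)}$ classes $\pmod d$, which combine via CRT (since $\gcd(d,q')=1$) with the fixed residue $a\pmod{q'}$ to give classes $\beta\pmod{q'd}$. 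Thus $S$ reduces to a signed combination of
\[
U_{\beta,d} := \sum_{\substack{n\in I\\ n\equiv\beta\pmod{q'd}}} e(g(n)/q),
\]
each a pure exponential sum over an arithmetic progression of length $\asymp N/(q'd)$.

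Substituting $n=\beta+q'd\cdot k$, the polynomial $g(\beta+q'd\cdot k)$ has degree $D$ in $k$ with leading coefficient $c_D(q'd)^D$; since $c_D\mid\Delta(g)\mid\mathcal M$ and $(q_{\mathcal M},\mathcal M)=1$, the denominator of the leading rational $c_D(q'd)^D/q$, once reduced on its $q_{\mathcal M}$-part, satisfies $Q_d\ge q_{\mathcal M}/(q'd)^D$. Provided $(q'd)^D\le q_{\mathcal M}^{1-\epsilon'}$, the hypotheses $D\ge 1/\delta$ and $N\ge q^{\delta(1+\epsilon)}$ place $Q_d$ in the admissible range for Lemma \ref{weylsumlemma}, yielding $|U_{\beta,d}|\ll (N/(q'd))\,Q_d^{-\eta_0}$ with $\eta_0=\eta_0(\delta,\epsilon,D)>0$; for $d> q^{1/D}/q'$ we fall back on the trivial bound $|U_{\beta,d}|\le N/(q'd)$. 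Choosing $\alpha>0$ sufficiently small (in terms of $\delta,\epsilon,D$) forces $Q_1\ge q_{\mathcal M}^{c}$ for some $c=c(\delta,\epsilon,D)>0$, so the $d=1$ term contributes $\ll N/q_{\mathcal M}^{c\eta_0}$; the remaining small-$d$ M\"obius terms contribute comparably, because $\sum_{d\mid R''}(\deg F)^{\omega(d)}d^{D\eta_0-1}$ is bounded in terms of $\delta$ and $f$; and the large-$d$ tail contributes $\ll Nq'/q^{1/D}$, also absorbable into $N/q_{\mathcal M}^{\eta}$. Combining yields \eqref{Key result M} for a suitable $\eta=\eta(\delta,\epsilon,f,g)>0$.

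The principal obstacle is the M\"obius-sieving step: one must check that summing the Weyl-regime and trivial-regime bounds across all $d\mid R''$ still yields net savings rather than destroying them. This rests on the fact that every prime divisor of $q_{\mathcal M}$ exceeds the threshold $2(\deg f+\deg g)+2/\delta+16$ built into $\mathcal M$, so the divisor product $\prod_{p\mid R''}\bigl(1+(\deg F)/p^{1-D\eta_0}\bigr)$ remains bounded for small $\eta_0$, and on the structural fact that $R''$ has only $O(1/\delta)$ distinct prime factors since $q\in\mathcal N(q^\delta)$, which keeps the trivial-regime tail small.
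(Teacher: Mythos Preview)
Your overall strategy matches the paper's proof: invoke Theorem \ref{thm: mainY}, reduce to the case where $q'_{\mathcal M}$ is a tiny power of $q_{\mathcal M}$, then split into residue classes modulo $q'$, M\"obius over divisors $d$ of $r=\prod_{p\mid q,\ p\nmid q'}p$, and apply the Weyl estimate (Proposition \ref{prop: Classic}) to each resulting pure exponential sum along an arithmetic progression, with the trivial bound for large $d$.

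There is, however, a genuine error in your final paragraph. The ``structural fact that $R''$ has only $O(1/\delta)$ distinct prime factors since $q\in\mathcal N(q^\delta)$'' is false: membership in $\mathcal N(y)$ bounds the \emph{size} of prime power divisors of $q$, not their \emph{number}. For instance, a primorial $q=p_1\cdots p_k$ lies in $\mathcal N(q^\delta)$ once $p_k\le q^\delta$, yet has $k$ prime factors with $k$ unbounded. Consequently your product $\prod_{p\mid R''}(1+(\deg F)/p^{1-D\eta_0})$ is \emph{not} bounded by a constant depending only on $f,g,\delta$; it can grow with $q$.

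The fix is simple and is exactly what the paper uses: one has $\omega(r)\le\omega(q)\ll \log q/\log\log q$, so that $(1+\deg F)^{\omega(r)}=q^{o(1)}$. The paper avoids your divisor sum entirely by pulling out this factor and taking a maximum:
\[
\Big|\frac1N\sum_{n\in I}\chi(f(n))e\Big(\frac{g(n)}q\Big)\Big|
\le (k+1)^{\omega(r)}\max_{d\mid r}\ \max_{C\bmod dq'}\ \Big|\frac{1}{N/q'}\sum_{\substack{n\in I\\ n\equiv C\ (dq')}}e\Big(\frac{g(n)}q\Big)\Big|,
\]
with $k=\deg F_+$. This $q^{o(1)}$ loss is then absorbed, since for $d>q_{\mathcal M}^{2\eta}$ the inner quantity is trivially $\ll q_{\mathcal M}^{-\eta}$, while for small $d$ one verifies $N/dq'\ge (q/dq')^{1/D+\epsilon/(2D)}$ and applies Proposition \ref{prop: Classic}. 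If you replace your two assertions (``remains bounded'' and ``$O(1/\delta)$ prime factors'') by this $q^{o(1)}$ bound, your argument goes through and is essentially identical to the paper's.
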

 
\begin{proof} Suppose that $g(x)$ has degree $D\geq 1/\delta$ and that $N\geq q^{\delta(1+\epsilon)} \geq  q^{1/D +\epsilon/D}$. Say $q_{\mathcal M}=q^{1-\lambda}$ with $\lambda<\epsilon/(4D)$.
 We can assume that $\chi^{r_f}$ is induced from a primitive character of conductor $q'$ where  $q'_{\mathcal M}<q_{\mathcal M}^\lambda$ else the desired result follows from \eqref{Key result M} or  \eqref{Key result2 M}  given by Theorem \ref{thm: mainY}  (adjusting the value of $\eta$).
Note that
\begin{equation} \label{qprimeub}
q'=(q'/q'_{\mathcal M})q'_{\mathcal M}\ll (q/q_{\mathcal M})q'_{\mathcal M}\leq q^{\lambda}q_{\mathcal M}^{\lambda} = q^{2\lambda-\lambda^2}.
\end{equation}

By definition we can write $f(n)=cF(n)^{r_f}$ so that $\chi(f(n))=\chi(c) \chi^{r_f}(F(n)) = \chi(c) \psi(F(n)) 1_{(\cdot,r)=1}(F(n))$ where $\psi$ has conductor $q'$ and $r=\prod_{p|q,\ p\nmid q'}p$. If $\chi(c)= 0$ or if $p|F$ for some prime $p$ dividing $q$, the  result follows trivially; otherwise our exponential sum in absolute value, divided by $N$, equals
\[
   \bigg|  \frac 1N \sum_{\substack{n\in I\\ (F(n),r)=1}}  \psi(F(n))e\bigg( \frac{ g(n)}q  \bigg) \bigg| 
  =   \frac 1N \bigg| \sum_{d|r} \mu(d)  \sum_{a \pmod {q'}} \psi(F(a)) \sum_{\substack{n\in I\\ d|F(n)\\ n\equiv a \pmod {q'}}}  e\bigg( \frac{ g(n)}q  \bigg) \bigg| 
  \]
  and 
 \[
  \sum_{\substack{n\in I\\ d|F(n)\\ n\equiv a \pmod {q'}}}  e\bigg( \frac{ g(n)}q  \bigg) =   \sum_{\substack{b \pmod d\\ F(b)\equiv 0 \pmod d}} \sum_{\substack{n\in I\\  n\equiv b \pmod d\\ n\equiv a \pmod {q'}}}  e\bigg( \frac{ g(n)}q  \bigg).
  \]
  The conditions in the last sum combine into one congruence $n\equiv C \pmod{dq'}$. The set of $\{ b \pmod d: F(b)\equiv 0 \pmod d\}$ can be determined by the Chinese Remainder Theorem so contains $\leq k^{\omega(d)}$ elements where $k=\deg F_+$  
  and $\omega(d)$ denotes the number of distinct prime factors of $d$. Taking absolute values everywhere and
  dividing by $N$ yields
  \[
   \bigg|  \frac 1N \sum_{\substack{n\in I\\ (F(n),r)=1}}  \psi(F(n))e\bigg( \frac{ g(n)}q  \bigg) \bigg| 
\leq 
 (k+1)^{\omega(r)} \max_{d|r}   \max_{C \mod {dq'}}   \bigg|  \frac 1{N/q' }\sum_{\substack{n\in I\\   n\equiv C \pmod {dq'}}}  e\bigg( \frac{ g(n)}q  \bigg) \bigg| 
\]
  Writing
$n=C+mdq'$ we get $g(n)=g(C)+dq'g_{C,d}(m)$ for some polynomial $g_{C,d}(x)$ of   degree $D$, with leading coefficient
$(dq')^{D-1}$ times the leading coefficient of $g$. So for some interval $J$ of length $N/dq'$ we have the upper bound
\[
\leq q^{o(1)} \max_{d|r}      \bigg|  \frac 1{N/q' }  \sum_{m\in J}  e\bigg( \frac{ g_{C,d}(m)}{q/dq'}\bigg) \bigg|, 
\]
which by \eqref{qprimeub} is trivially bounded by 
$$
q^{o(1)} \frac {q'}N\left(\frac {N}{dq'}+1\right) \le q^{o(1)}\left(\frac 1d+\frac {q^{2\lambda-\lambda^2}}{q^{1/D}}\right) \ll \frac 1{q_{\mathcal M}^{\eta}},
$$
for $d> q_{\mathcal M}^{2\eta}$.  Thus (adjusting $\eta$) we may assume that the maximum occurs at a value $d$ with $d<q^{\eta}$.
It follows from \eqref{qprimeub} that for $\eta<\lambda^2$ and $\lambda<\epsilon/(4D)$, we have
$$ 
N/dq' \ge q^{\frac 1D+\frac {\epsilon}D-2\lambda+\lambda^2-\eta}> q^{\frac 1D+\frac {\epsilon}D-2\lambda} >q^{\frac 1D+\frac {\epsilon}{2D}}>(q/dq')^{\frac 1D+\frac {\epsilon}{2D}}.
$$
 Therefore Proposition \ref{prop: Classic} applied with $\epsilon/(2D)$ in place of $\epsilon$ implies that  
\[
 \bigg|  \frac 1{N/q' }  \sum_{m\in J}  e\bigg( \frac{ g_{C,d}(m)}{q/dq'}\bigg) \bigg|  \ll \frac 1{d(q/dq')^\eta} \leq \frac 1{(q/q')^\eta}
 \leq \frac 1{(q_{\mathcal M}/q'_{\mathcal M})^\eta}
  \leq   \frac 1{q_{\mathcal M}^{(1-\epsilon) \eta}},
\]
 and the claim follows after adjusting the value of $\eta$.
\end{proof}

\begin{proof}  [Proof of  Theorem \ref{thm: main}] 
This follows immediately from combining   Theorems \ref{thm: mainY} and \ref{thm: mainZ}.
 \end{proof}

  \begin{remark} \label{rem: MCC}
 In the statement of Theorem \ref{thm: Almost there} we require that $\eta\asymp 4^{-k}$ and $k<N^\eta$, and in the proof of Theorem \ref{thm: mainY}  we found  that
$\delta\asymp 1/k$. Thus, for  $N=y^{1+\epsilon}$ and $y=q^\delta$, we need $\delta \gg \frac 1{\log\log q}$, and hence $N\geq q^{ \frac c{\log\log q}}.$ 
\end{remark}

  \section{Corollaries}  \label{sec: Corr} 
  
\begin{proof} [Sketch of the proof of Corollary \ref{cor: mainnonprincipal2}] Suppose that $\chi$ is induced from a primitive character $\chi_Q$ of conductor $Q>1$ so that $\chi(n) =  \chi_Q(n)  1_{(n,r)=1}$ where $r=\prod_{p|q,  p\nmid Q}p$. 
   Therefore
\[
 \sum_{n\in I} \chi(n) = \sum_{n\in I} \chi_Q(n) \sum_{\ell | n,r} \mu(\ell) = \sum_{\ell |r} \mu(\ell)  \chi_Q( \ell  ) \sum_{m\in \frac 1\ell I} \chi_Q( m)   ,
\]
where $n=\ell m$ and $\frac 1\ell I$ is the interval $(\frac x\ell , \frac{x+N}\ell ]$ if $I=(x,x+N]$, and this is 
\[
\leq \tau(r) \cdot \max_{\ell |r}  \bigg|  \sum_{m\in \frac 1\ell I} \chi_Q(m)  \bigg| 
\leq q^{o(1)} \max_{\ell |r}  \bigg|  \sum_{m\in \frac 1\ell I} \chi_Q(m)  \bigg| \ll Q^{1/2}q^{o(1)}
\]
in absolute value as $\tau(r)\leq  r^{o(1)} \leq  q^{o(1)}$, and then by the Polya-Vinogradov theorem.
Thus we may assume that $Q>N^{2-3\eta}$ else the above is  $\ll  N^{1-\eta}$.
Also we may assume that the maximum occurs with $\ell<N^{2\eta}$ else
the sum is trivially bounded by the length of the interval which is $\leq N/\ell+1\ll N^{1-\eta}$.
Therefore taking $f(x)=x, g(x)=0$ in   Theorem \ref{thm: main}, with 
$q$ and $N$ replaced by $Q$ and   $N/\ell$, respectively, we obtain the bound
$ \ll N / (\ell  Q^{\eta}) \leq  N/N^{2\eta -3\eta^2} \ll N^{1-\eta} $.
 \end{proof}

  \begin{proof}[Proof of Corollary \ref{cor: L values}]
Fix $\epsilon>0$ very small and  $\delta=k\epsilon$ for any given integer $k\geq 1$.
Consider those integers $q$, coprime to $\mathcal M$, for which
either $P_1$ is prime and $y:=q^\delta\geq Y:=\max\{ P_1^{1/2},P_2\}>q^{\delta-\epsilon}$,
or $P_1$ is a prime power and $y\geq Y:=P_1>q^{\delta-\epsilon}$, so that $q\in \mathcal N(y)$.
Applying Corollary \ref{cor: mainnonprincipal2}    for $I=(0,N]$  we have
\[
\sum_{y^{1+\epsilon}<n\leq q} \frac{\chi(n)}{n^{1+it}}  
= N^{-1-it}  \sum_{n\leq N} \chi(n)\bigg|_{y^{1+\epsilon}}^q +(1+it  )\int_{y^{1+\epsilon}}^q N^{-2-it} \sum_{n\leq N} \chi(n)\ dN
\]
\[
\ll y^{-\eta}(1 +(1+|t|) \eta^{-1})\ll 1
\]
as $\eta$ is given and $t = q^{o(1)}< y^\eta$.
For the sum over $n>q$ we use the Polya-Vinogradov theorem in the analogous argument and obtain $\ll (1+|t|) \log q/\sqrt{q}\ll 1$. Therefore 
\begin{align*}
|L(1+it,\chi)| &\ll \sum_{n\leq y^{1+\epsilon}}  \bigg|\frac{\chi(n)}{n^{1+it}}  \bigg| + O(1) \leq\sum_{n\leq y^{1+\epsilon}} \frac 1n+O(1)\\
&=(1+\epsilon ) \log y+O(1) =  \log Y +O( \epsilon \log q).
\end{align*}
The result follows letting $\epsilon\to 0$.
\end{proof}

  \begin{proof}[Proof of Corollary \ref{eq:BT-Titchmarsh}] Following the proof of \cite[Theorem 1.5]{XZ}, we may replace \cite[Lemma 3.6]{XZ} by Corollary \ref{cor: mainnonprincipal2}; this still yields the inequality \cite[Eq.(5.19)]{XZ}, and the remaining argument goes through unchanged.
   \end{proof}

  \bibliographystyle{plain}

\end{document}